\numberwithin{equation}{section}
\theoremstyle{plain}
\newtheorem{theorem}{Theorem}[section]
\newtheorem{proposition}[theorem]{Proposition}
\newtheorem{lemma}[theorem]{Lemma}
\theoremstyle{definition}
\newtheorem{definition}[theorem]{Definition}
\newcommand{\Dw}{\mathscr{D}^{\omega}}
\newcommand{\Db}{\mathscr{D}^{\beta}}
\newcommand{\dist}[0]{\operatorname{dist}}
\newcommand{\pair}[2]{\langle #1,#2 \rangle}
\newcommand{\ave}[1]{\langle #1\rangle}
\newcommand{\bve}[1]{\big\langle #1\big\rangle}
\newcommand{\bair}[2]{\big\langle #1,#2 \big\rangle}
\newcommand{\BMO}[0]{\operatorname{BMO}}
\newcommand{\sigmaout}{\sigma_{\operatorname{out}}}
\newcommand{\sigmanear}{\sigma_{\operatorname{near}}}
\newcommand{\sigmadeep}{\sigma_{\operatorname{deep \ in}}}
\newcommand{\sigmashallow}{\sigma_{\operatorname{shallow \ in}}}
\newcommand{\pro}{\operatorname{prod}}
\begin{document}

\title{Bloom Type Inequality: The Off-diagonal Case\thanks{This work was partially supported by the
National Natural Science Foundation of China (11525104, 11531013, 11761131002 and 11801282).}}

\author{Junren Pan \quad and  \quad  Wenchang Sun\\
\small
School of Mathematical Sciences and LPMC,  Nankai University,
      Tianjin~300071, China \\
 \small   Emails: 1106840509@qq.com, \,\, sunwch@nankai.edu.cn}

\date{}
\maketitle

\begin{abstract}
In this paper, we establish a representation formula for fractional integrals. As a consequence, for two fractional integral operators $I_{\lambda_1}$ and
 $I_{\lambda_2}$, we prove a Bloom type inequality
\begin{align*}
\mbox{\hbox to 8em{}}& \hskip -8em \left\|\big[I_{\lambda_1}^1,\big[b,I_{\lambda_2}^2\big]\big]
   \right\|_{L^{p_2}(L^{p_1})(\mu_2^{p_2}\times\mu_1^{p_1})\rightarrow L^{q_2}(L^{q_1})(\sigma_2^{q_2}\times\sigma_1^{q_1})}
   \lesssim_{\substack{[\mu_1]_{A_{p_1,q_1}(\mathbb R^n)},[\mu_2]_{A_{p_2,q_2}(\mathbb R^m)} \\ [\sigma_1]_{A_{p_1,q_1}(\mathbb R^n)},[\sigma_2]_{A_{p_2,q_2}(\mathbb R^m)}}}
   \|b\|_{\BMO_{\pro}(\nu)},
\end{align*}
where the indices satisfy $1<p_1<q_1<\infty$, $1<p_2<q_2<\infty$,  $1/q_1+1/p_1'=\lambda_1/n$ and $1/q_2+1/p_2'=\lambda_2/m$,
the weights $\mu_1,\sigma_1 \in A_{p_1,q_1}(\mathbb R^n)$, $\mu_2,\sigma_2 \in A_{p_2,q_2}(\mathbb R^m)$ and $\nu:=\mu_1\sigma_1^{-1}\otimes \mu_2\sigma_2^{-1}$,  $I_{\lambda_1}^1$  stands for $I_{\lambda_1}$ acting on the first variable and $I_{\lambda_2}^2$ stands for $I_{\lambda_2}$ acting on the second variable, $\BMO_{\rm{prod}}(\nu)$ is a weighted product $\BMO$ space and $L^{p_2}(L^{p_1})(\mu_2^{p_2}\times\mu_1^{p_1})$ and $ L^{q_2}(L^{q_1})(\sigma_2^{q_2}\times\sigma_1^{q_1}) $ are mixed-norm spaces.
\end{abstract}

\textbf{Key words}.\,\,
Fractional integrals, representation formula, mixed-norm spaces,
Bloom type inequality, product BMO.

Mathematics Subject Classification:  Primary 42B20

\section{Introduction and The Main Results}
Let $\mu$ and $\sigma$ be two weights in $\mathbb R^{n+m}$. A two-weight problem asks for a characterization of the boundedness of an operator $T: L^{p}(\mu)\rightarrow L^p(\sigma)$.
In a Bloom type variant of this problem, $\mu$ and $\sigma$ are Muckenhoupt $A_p$ weights and  a function $b$, which   is taken from some appropriate weighted $\BMO$ space $\BMO(\nu)$ for some Bloom type weight $\nu:= \mu^{1/p}\sigma^{-1/p}$,
is invoked.
This leads us naturally to the commutator setting.

In the one-parameter case, Bloom \cite{Bloom} obtained such a two-weight estimate for $[b, H]$, where $H$ is the Hilbert transform. Holmes, Lacey and Wick \cite{HLW} extended Bloom's result to general Calder\'on-Zygmund operators. The iterated case is by Holmes and Wick \cite{HW} (see also Hyt\"onen \cite{Hytonen16}). An improved iterated case is by Lerner, Ombrosi and Rivera-R\'ios \cite{LORR}.

In the bi-parameter case, there are two types of commutators: one is involved with little BMO spaces, and the other is associated with the more complicated product BMO spaces.
For the first type, Holmes, Petermichl and Wick~\cite{HolmesPetermichlWick} initiated the study of $[b, T]$ with $T$ being any bi-parameter Calder\'on-Zygmund operator and $b$ being some weighted little BMO function. Then Li, Martikainen and Vuorinen~\cite{LiMartikainenVuorinen1806}
extended Holmes-Petermichl-Wick's result to higher order commutators, and provided a simpler proof for the first order case. In \cite{cg} Cao and Gu studied the related question for fractional integrals. For the second type, Li, Martikainen and Vuorinen~\cite{LiMartikainenVuorinen1810} first studied the question for
$[T_n, [b, T_m]]$ (known as the Ferguson-Lacey type commutator \cite{Ferguson-Lacey}), where $T_n$ and $T_m$ are one-parameter Calder\'on-Zygmund operators in $\mathbb R^n$ and $\mathbb R^m$, respectively. Recently, Airta~\cite{Airta} generalized this result to the multi-parameter case.

In this paper, we focus on the Bloom type inequality for the Ferguson-Lacey type commutator involved with fractional integrals.
Specifically, we prove a Bloom type inequality for $[I_{\lambda_1}^1,[b,I_{\lambda_2}^2]]$, where
\begin{equation}\label{eq:e1}
I_{\lambda}f(x) := \int \frac{f(y)}{|x-y|^{\lambda}}dy
\end{equation}
is the fractional integral operator,
and for a measurable function $f$ defined on $\mathbb R^{n+m}$,
$I_{\lambda_1}^1f$ stands for  $I_{\lambda_1}$ acting on the first variable of $f$
and $I_{\lambda_2}^2f$ stands for $I_{\lambda_2}$ acting on the second variable of $f$, i.e.,
\[
I_{\lambda_1}^1f(x_1,x_2):=I_{\lambda_1}\big(f(\cdot,x_2)\big)(x_1), \quad I_{\lambda_2}^2f(x_1,x_2):=I_{\lambda_2}\big(f(x_1,\cdot)\big)(x_2).
\]
Our results extend similar results for singular integral operators.
The main difference is that  mixed-norm spaces \cite{Benedek1962,BenedekPanzone}
are invoked when we study the off-diagonal case of Bloom type inequalities
for fractional integral operators.
Note that there are many results on mixed-norm spaces, e.g., see
\cite{ChenSun2017,
Fernandez1987,
Hart2018,
Hormander1960,Rubio1986,
Stefanov2004,Torres2015} for some recent advances on mixed-norm Lebesgue spaces,
 \cite{Cleanthous2017c} for Besov spaces,
 \cite{Georgiadis2017,Johnsen2015} for Triebel-Lizorkin spaces
and \cite{Cleanthous2017,HuangLiuYangYuan2018,Huang2019}
for Hardy spaces.

To get a Bloom type inequality for singular integral operators,
a basic tool is the
representation theorem (see Hyt\"{o}nen \cite{Hytonen10} for the one-parameter case, and Martikainen \cite{Martikainen2011}, Ou \cite{Ou2015} for the bi-parameter and multi-parameter cases, respectively).
To deal with fractional integral operators, we need to establish a representation formula.
Before stating our main results, we introduce some notations.

Let $\mathscr D^0$ be the standard dyadic system in $\mathbb R^n$, i.e.,
\[
\mathscr{D}^{0}:=\bigcup_{k\in\mathbb{Z}}\mathscr{D}^{0}_{k},\quad \mathscr{D}^{0}_{k}:=\Big\{2^{-k}\big([0,1)^{n}+m\big):m\in \mathbb{Z}^{n}\Big\}.
\]
Given some $\omega=(\omega_j)_{j\in\mathbb{Z}}\in(\{0,1\}^n)^{\mathbb{Z}}$ and a cube $I$, denote
\[
I\dotplus \omega := I+\sum_{j:2^{-j}<\ell(I)}2^{-j}\omega_j.
\]
We define the random dyadic system $\mathscr{D}^{\omega}$ by
\begin{equation}\label{eq:Dw}
\mathscr{D}^{\omega}:= \big\{I\dotplus w:I\in \mathscr{D}^{0}\big\}=\bigcup_{k\in\mathbb{Z}}\mathscr{D}^{\omega}_{k}.
\end{equation}

\begin{definition}[Fractional dyadic shifts]
Given two integers $i,j\ge 0$, the fractional dyadic shift associated with $(i,j)$ is defined by
\[
S^{i,j}_{\lambda,\omega}f := \sum_{K\in\mathscr{D}^{\omega}} A^{i,j}_{\lambda,K}f,
\]
where each $A^{i,j}_{\lambda,K}$ has the form
\[
A^{i,j}_{\lambda,K}f = \sum_{\substack{I,J\in\mathscr{D}^{\omega} \\ I^{(i)}=K, J^{(j)}=K}}a_{\lambda,I,J,K}\langle f,h_{I}\rangle h_{J}
\]
with the coefficients $a_{\lambda,I,J,K}$ satisfying
$|a_{\lambda,I,J,K}| \le   |I|^{1/2}|J|^{1/2}/|K|^{\lambda/n}$.
\end{definition}

We are now ready to state our first main result, a representation formula
of fractional integral operators.
\begin{theorem}\label{thm:rep}
Let $0<\lambda<n$ be a constant and  $I_{\lambda}$ be a fractional integral operator
defined by (\ref{eq:e1}). Then we have
 \[
 \langle g, I_{\lambda}f\rangle = C\cdot \mathbb{E}_{\omega}\sum^{\infty}_{i.j=0} 2^{-\max{(i,j)}/2}\langle g, S^{i,j}_{\lambda,\omega}f\rangle,
 \]
where
 $f,g \in L^{\infty}_{c}(\mathbb{R}^n)$ and
 the constant $C$ depends only on $\lambda$ and the dimension $n$.
\end{theorem}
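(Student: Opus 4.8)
The plan is to reprove, in the fractional setting, Hyt\"onen's dyadic representation theorem \cite{Hytonen10}: expand $f$ and $g$ in Haar series adapted to the random grid $\Dw$, insert a good/bad decomposition, and reorganize the resulting double sum into fractional dyadic shifts, keeping careful track of the $\lambda$-dependent powers. Concretely, fix a realization of $\Dw$. For $f,g\in L^{\infty}_{c}(\mathbb R^{n})\subset L^{2}(\mathbb R^{n})$ the cancellative Haar expansions $f=\sum_{I\in\Dw}\pair{f}{h_{I}}h_{I}$ and $g=\sum_{J\in\Dw}\pair{g}{h_{J}}h_{J}$ hold in $L^{2}$ (there is no ``DC tail'' because $f\in L^{1}$ forces the scale-$(-\infty)$ averages to vanish), so
\[
\pair{g}{I_{\lambda}f}=\sum_{I,J\in\Dw}\pair{f}{h_{I}}\pair{g}{h_{J}}\pair{h_{J}}{I_{\lambda}h_{I}}.
\]
To make this identity and the subsequent rearrangements rigorous I would first truncate to scales $2^{-N}\le\ell(I),\ell(J)\le 2^{N}$, prove everything for the truncations with bounds independent of $N$, and pass to the limit, using that $I_{\lambda}\colon L^{p}(\mathbb R^{n})\to L^{q}(\mathbb R^{n})$ is bounded for $1/q=1/p-(n-\lambda)/n$ and that $g$ has compact support. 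One then takes the expectation $\mathbb E_{\omega}$; the left-hand side is unchanged since it does not depend on $\omega$.

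Next I insert the good/bad decomposition of $\Dw$ with the usual parameters $\gamma\in(0,1)$ and $r\in\mathbb N$, to be fixed at the end in terms of $\lambda$ and $n$, splitting the double sum according to whether the smaller of the two cubes is $\Dw$-good. By the standard probabilistic lemma (following \cite{Hytonen10}, using the translation invariance of the construction and the independence of the goodness of a cube from the positions of the cubes at its own and finer scales), the expected contribution of the pairs whose smaller cube is bad equals $\pi_{\mathrm{bad}}$ times the full expected sum; hence $\pair{g}{I_{\lambda}f}=\pi_{\mathrm{good}}^{-1}\mathbb E_{\omega}(\text{good part})$, with $\pi_{\mathrm{good}}=1-\pi_{\mathrm{bad}}>0$. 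For the good pairs I then read off a shift: assuming $\ell(I)\le\ell(J)$, set $K:=J$ when $I\subseteq J$ and $K:=$ the minimal common $\Dw$-ancestor of $I$ and $J$ when $I\cap J=\emptyset$ (and symmetrically when $\ell(I)>\ell(J)$); writing $i,j$ for the integers with $I^{(i)}=K$, $J^{(j)}=K$ and putting $a_{\lambda,I,J,K}:=c\,2^{\max(i,j)/2}\pair{h_{J}}{I_{\lambda}h_{I}}$ (times the indicator that the smaller cube is good) for a fixed normalizing constant $c$, the good part becomes exactly $c^{-1}\sum_{i,j\ge 0}2^{-\max(i,j)/2}\pair{g}{S^{i,j}_{\lambda,\omega}f}$ --- with all Haar functions cancellative, so that no separate paraproduct term is needed --- provided these coefficients satisfy the size bound required in the definition of a fractional dyadic shift.

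The heart of the proof is therefore the estimate $|\pair{h_{J}}{I_{\lambda}h_{I}}|\lesssim 2^{-\max(i,j)/2}\,|I|^{1/2}|J|^{1/2}|K|^{-\lambda/n}$, which I would prove by hand from the kernel bounds $|x-y|^{-\lambda}$ and $\big||x-y|^{-\lambda}-|x'-y|^{-\lambda}\big|\lesssim|x-x'|\,|x-y|^{-\lambda-1}$, the mean-zero property of the smaller Haar function, and --- crucially --- the deep nesting produced by goodness, which guarantees that the ball around the smaller cube on which the larger Haar function is constant is not truncated by $\partial K$, so that the cancellation of the smaller Haar function acts against a genuinely $y$-independent quantity. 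In the separated case, a single use of the cancellation of the smaller cube gives a bound of the shape $|I|^{1/2}|J|^{1/2}\cdot\min(\ell(I),\ell(J))\cdot\dist(I,J)^{-\lambda-1}$; in the nested case $I\subseteq J=K$ a direct computation gives $|\pair{h_{J}}{I_{\lambda}h_{I}}|\lesssim |J|^{-1/2}|I|^{1/2}\ell(I)\big(\rho^{\,n-\lambda-1}+\rho^{-1}\ell(J)^{\,n-\lambda}\big)$, where $\rho$ is the distance from $I$ to the boundary of the child of $J$ containing it. Inserting the goodness estimates $\dist(I,J)\gtrsim\ell(I)^{\gamma}\ell(K)^{1-\gamma}$ and $\rho\gtrsim\ell(I)^{\gamma}\ell(J)^{1-\gamma}$ and comparing exponents, both bounds reduce to $2^{-\max(i,j)/2}|I|^{1/2}|J|^{1/2}|K|^{-\lambda/n}$ once $\gamma$ is taken small enough in terms of $\lambda$ and $n$ (for instance $\gamma\le\tfrac12$ already settles the nested case, while $\gamma\lesssim(\lambda+2)^{-1}$ settles the separated one), with $r$ then chosen so that $\pi_{\mathrm{good}}>0$; for the finitely many $i,j\le r$ only the size bound on the kernel is needed, since there $2^{-\max(i,j)/2}$ is comparable to a constant. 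I expect this exponent bookkeeping --- especially in the regime $\lambda$ close to $n$, where without the deep nesting one obtains no decay in $\max(i,j)$ at all --- to be the main obstacle; the remaining points (the truncation and limiting argument, the absolute convergence of the series for each fixed $\omega$, which follows from the same geometric estimates, and the probabilistic lemma for the bad part) are routine adaptations of the one-parameter template. Collecting constants yields the identity with $C=c^{-1}\pi_{\mathrm{good}}^{-1}$, which depends only on $\lambda$ and $n$.
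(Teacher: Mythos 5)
Your proposal follows essentially the same route as the paper: Haar expansion of $f$ and $g$, the probabilistic reduction to pairs whose smaller cube is good, choice of $K=I\vee J$ (or $K=J$ in the nested case), and the coefficient estimate $|\langle h_J,I_\lambda h_I\rangle|\lesssim 2^{-\max(i,j)/2}|I|^{1/2}|J|^{1/2}|K|^{-\lambda/n}$ proved by splitting into separated, shallow--nested and deeply nested regimes. The exponent bookkeeping you sketch is consistent with the paper's choice $\gamma=\tfrac{1}{2(\lambda+1)}$, and your treatment of the finitely many scale differences $\le r$ by raw kernel size bounds is a harmless variant of the paper's use of the $L^p\to L^q$ boundedness of $I_\lambda$.

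One point does need repair, precisely at what you call the heart of the proof. In the deeply nested case you argue that, because $h_J$ is constant on a large ball $B$ around the small cube $I$, ``the cancellation of the smaller Haar function acts against a genuinely $y$-independent quantity.'' But $\int_B |x-y|^{-\lambda}\,dx$ is \emph{not} independent of $y$; it is merely a radially symmetric (equivalently, reflection-symmetric about the center $y_I$) function of $y-y_I$ when $B$ is a ball, or more generally a symmetric set, centered at $y_I$. The pairing with $h_I$ then vanishes not because of $y$-independence, but because every cancellative Haar function $h_I^{\eta}$ ($\eta\neq 0$) is odd under reflection across at least one coordinate hyperplane through $y_I$, while the $x$-integral over a concentric symmetric region is even under that reflection. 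This is exactly the content of the paper's Lemma~\ref{lem:vanish}, stated for a concentric cube $J'$ rather than a ball, and invoked via the decomposition $\langle h_J, I_\lambda h_I\rangle = \langle h_J - 1_{J'}\langle h_J\rangle_{J_1}, I_\lambda h_I\rangle$, which kills the dangerous region near $I$ before the mean-zero of $h_I$ and the kernel smoothness are applied. Without this symmetry step, the contribution from $x$ within distance $O(\ell(I))$ of $I$ (handled by size bounds) only gives the required decay when $\lambda<n-\tfrac12$; for $\lambda$ close to $n$, which you yourself single out as the main obstacle, the argument as literally stated would break down.
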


Recall that for $1<p<\infty$,  the Muckenhoupt $A_p$ class consists of all
locally integrable  positive functions $w(x)$ for which
\[
[w]_{A_p(\mathbb R^n)}:=\sup_{Q}\bve{w}_{Q}\bve{w^{1-p'}}_{Q}^{p-1}<\infty.
\]
And for $1<p<q<\infty$,   $A_{p,q}(\mathbb R^n)$ consists of all weight functions
$w(x)$ for which
\[
[w]_{A_{p,q}(\mathbb R^n)}:=\sup_{Q}\bve{w^q}_{Q}\bve{w^{-p'}}_{Q}^{q/p'}<\infty,
\]
where $\ave{w}_{Q}:=\frac{1}{Q}\int_{Q}w$ and the supremum is taken over all cubes $Q\subset\mathbb R^n$ with sides parallel to the axes.

Now we introduce  the weighted product BMO space. Let $\Dw$ be a dyadic system in $\mathbb R^n$, where $\omega=(\omega_j)_{j\in\mathbb{Z}}\in(\{0,1\}^n)^{\mathbb{Z}}$.
And let $\mathscr D^{\beta}$ be a dyadic system in $\mathbb R^m$,
where
$\beta=(\beta_j)_{j\in\mathbb{Z}}\in (\{0,1\}^m)^{\mathbb{Z}}$.
Given $w \in A_2(\mathbb R^{n+m})$, we say that a locally integrable
function $b:\mathbb R^{n+m}\rightarrow\mathbb C$ belongs to the weighted
product BMO space $\BMO_{\pro}^{\Dw,\Db}(w)$ if
\[
\|b\|_{\BMO_{\pro}^{\Dw,\Db}(w)}:=\sup_{\Omega}\Big(\frac{1}{w(\Omega)}\sum_{\substack{I\in\Dw,J\in\Db\\I\times J\subset \Omega}}|\pair{b}{h_I\otimes h_J}|^2\ave{w}_{I\times J}^{-1}\Big)^{1/2}<\infty,
\]
where $w(\Omega):=\int_{\Omega}w(x_1,x_2)dx_1dx_2$ and
the supremum is taken over all subsets $\Omega\subset \mathbb R^{n+m}$
such that $|\Omega|<\infty$ and   for every $x\in \Omega$,
there exist some $I\in\Dw$ and $J\in\Db$ satisfying $x\in I\times J\subset\Omega$.
The non-dyadic product BMO norm
can be defined  by taking the supremum over all dyadic systems $\Dw$ and $\Db$, i.e.,
\[
 \|b\|_{\BMO_{\pro}(w)} := \sup_{\Dw,\Db} \|b\|_{\BMO_{\pro}^{\Dw,\Db}(w)}.
\]

Now we   state the second main result, a Bloom type inequality for fractional integral operators.
\begin{theorem}\label{thm:blo}
Let $I_{\lambda_1}$ and $I_{\lambda_2}$ be two fractional integral operators
acting on functions defined on $\mathbb R^n$ and $\mathbb R^m$, respectively.
Suppose that $1<p_1<q_1<\infty$, $1<p_2<q_2< \infty$,
$1/q_1+1/p_1'=\lambda_1/n$
and $1/q_2+1/p_2'=\lambda_2/m$.
Let $\mu_1,\sigma_1\in A_{p_1,q_1}(\mathbb R^n)$ and $\mu_2,\sigma_2
\in A_{p_2,q_2}(\mathbb R^m)$. Set $\nu := \mu_1\sigma_1^{-1}
\otimes \mu_2\sigma_2^{-1}$. Then we have the quantitative estimate
\begin{align*}
\mbox{\hbox to 8em{}}&\hskip -8em \big\|\big[I_{\lambda_1}^1,\big[b,I_{\lambda_2}^2\big]\big]\big\|_{L^{p_2}(L^{p_1})
 (\mu_2^{p_2}\times\mu_1^{p_1})\rightarrow L^{q_2}(L^{q_1})(\sigma_2^{q_2}\times\sigma_1^{q_1})} \\
&
\lesssim_{\substack{[\mu_1]_{A_{p_1,q_1}(\mathbb R^n)},[\mu_2]_{A_{p_2,q_2}(\mathbb R^m)} \\ [\sigma_1]_{A_{p_1,q_1}(\mathbb R^n)},[\sigma_2]_{A_{p_2,q_2}(\mathbb R^m)}}}\|b\|_{\BMO_{\pro}(\nu)}.
\end{align*}
\end{theorem}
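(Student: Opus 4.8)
The plan is to reduce the commutator estimate to a dyadic model via the representation formula in Theorem~\ref{thm:rep}, applied separately in each variable, and then to prove a uniform bound for the resulting bi-parameter ``shifted commutator'' acting on mixed-norm spaces. First I would apply Theorem~\ref{thm:rep} to $I_{\lambda_1}$ in the first variable and to $I_{\lambda_2}$ in the second variable; since $I_{\lambda_1}^1$ and $I_{\lambda_2}^2$ act on different variables and the commutator structure $[I_{\lambda_1}^1,[b,I_{\lambda_2}^2]]$ is linear in each, one obtains (after testing against a function $g$ in the dual mixed-norm space and using Fubini) an average over $\omega$ and $\beta$ of a double sum
\[
C^2\,\mathbb E_{\omega}\mathbb E_{\beta}\sum_{i_1,j_1\ge 0}\sum_{i_2,j_2\ge 0}2^{-\max(i_1,j_1)/2}2^{-\max(i_2,j_2)/2}\,\big\langle g,\big[S^{i_1,j_1}_{\lambda_1,\omega},[b,S^{i_2,j_2}_{\lambda_2,\beta}]\big]f\big\rangle,
\]
where the first shift acts in $x_1\in\mathbb R^n$ and the second in $x_2\in\mathbb R^m$. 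The exponential decay factors $2^{-\max(i_1,j_1)/2}2^{-\max(i_2,j_2)/2}$ mean it suffices to prove, for each fixed complexity tuple $(i_1,j_1,i_2,j_2)$, the bound
\[
\big\|\big[S^{i_1,j_1}_{\lambda_1,\omega},[b,S^{i_2,j_2}_{\lambda_2,\beta}]\big]\big\|_{L^{p_2}(L^{p_1})(\mu_2^{p_2}\times\mu_1^{p_1})\to L^{q_2}(L^{q_1})(\sigma_2^{q_2}\times\sigma_1^{q_1})}\lesssim P(i_1,j_1,i_2,j_2)\,\|b\|_{\BMO_{\pro}(\nu)}
\]
with at most polynomial growth $P$ in the complexities (absorbed by the decay), and with implicit constant depending only on the four $A_{p,q}$ characteristics.

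The core of the argument is the dyadic estimate. Expanding $b$ in the bi-parameter Haar basis $b=\sum_{I,J}\langle b,h_I\otimes h_J\rangle\,h_I\otimes h_J$ and inserting this into the nested commutator, one splits the operator into a sum of ``paraproduct-type'' pieces according to whether, in each variable, the Haar cube of $b$ is large, comparable, or small relative to the shift cubes, and whether the Haar function of $b$ sits in the ``$\langle f,h_I\rangle$'' slot or the ``$h_J$'' slot of the adjacent shift. This is the bi-parameter analogue of the one-parameter paraproduct decomposition used for $[b,T]$, carried out twice (once per variable) because of the two nested commutators; one obtains finitely many model operators of the form (full/partial paraproducts composed with shifts, mixed in the two variables). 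Each model operator is then controlled by a weighted testing/Carleson-embedding argument: the product-BMO norm $\|b\|_{\BMO_{\pro}(\nu)}$ with the Bloom weight $\nu=\mu_1\sigma_1^{-1}\otimes\mu_2\sigma_2^{-1}$ supplies a bi-parameter Carleson condition, and one uses the two-weight fractional Carleson embedding theorem in each variable together with the off-diagonal relations $1/q_i+1/p_i'=\lambda_i/n,\lambda_i/m$ and the $A_{p_i,q_i}$ hypotheses (which give the requisite $A_\infty$ and reverse-H\"older bumps) to pass from the Haar-coefficient square sum to an $L^{p_i}(\mu_i^{p_i})\to L^{q_i}(\sigma_i^{q_i})$ estimate. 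The mixed-norm structure is handled by iterating: one freezes the $\mathbb R^m$-variable, runs the $\mathbb R^n$-estimate with constants uniform in $x_2$, and then runs the $\mathbb R^m$-estimate on the resulting function-valued inequality; the Fefferman--Stein / vector-valued extensions needed here follow from the weighted (fractional) maximal function bounds, using that the mixed-norm weight is a tensor product.

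I expect the main obstacle to be the clean organization of the \emph{bi-parameter paraproduct decomposition of the nested commutator} together with the bookkeeping of complexities: unlike the one-parameter case, one must simultaneously track three positions (above/equal/below) and two slots in each of the two variables, so there are many model operators, and for the ``mixed'' ones (paraproduct in $x_1$, shift in $x_2$, or vice versa) one needs the fractional two-weight Carleson embedding in a genuinely vector-valued / mixed-norm form with polynomial-in-complexity constants. A secondary technical point is verifying that the off-diagonal scaling does not destroy the Carleson packing condition coming from product BMO: here the weight $\nu$ enters through $\langle w\rangle_{I\times J}^{-1}$ in the $\BMO_{\pro}(\nu)$ norm and must be matched against the $|K|^{\lambda/n}$ normalization of the fractional shifts, which is exactly where the hypotheses $1/q_i+1/p_i'=\lambda_i/n$ (resp. $/m$) are used. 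Once the model-operator bounds are in place with polynomial growth in $(i_1,j_1,i_2,j_2)$, summing against the $2^{-\max/2}$ factors and taking $\mathbb E_\omega\mathbb E_\beta$ and a supremum over dyadic grids (to pass from the dyadic product BMO to $\BMO_{\pro}(\nu)$) completes the proof.
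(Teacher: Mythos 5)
Your proposal correctly identifies the overall structure of the proof: apply the representation formula in each variable to reduce to bi-parameter shifted commutators $[S^{i,j,1}_{\lambda_1,\omega},[b,S^{s,t,2}_{\lambda_2,\beta}]]$, expand $b$ via a bi-parameter paraproduct decomposition, and estimate the resulting pieces through the weighted $H^1$--$\BMO_{\pro}$ duality (your ``Carleson embedding''), using mixed-norm weighted bounds for the strong maximal function, martingale difference square functions, and vector-valued extensions, plus the off-diagonal relations $1/q_i+1/p_i'=\lambda_i/n$ (resp.\ $\lambda_i/m$) to match the fractional scaling against the Bloom weight $\nu$. You also correctly observe that only polynomial growth in the complexities is needed. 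This is the same strategy as the paper.

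One substantive point you leave implicit, and which is essential to making the argument close: the paper organizes the decomposition as $bf=\sum_{k=1}^8 A_k(b,f)+W(b,f)$ (following Li--Martikainen--Vuorinen) and stresses that for $k\ge 5$ the individual pieces $S^{i,j,1}(A_k(b,\cdot))$, etc.\ are \emph{not} boundedly controlled on their own --- because these paraproducts contain averaging operators $E^1_Q$ or $E^2_P$ applied to $b$ in one slot, which do not sit inside $\BMO_{\pro}(\nu)$. One must pair, e.g., $S^{i,j,1}(A_5(b,f))$ with $-A_5(b,S^{i,j,1}f)$ and exploit the commutator cancellation to rewrite $\ave{\pair{b}{h_P}_2}_I - \ave{\pair{b}{h_P}_2}_J$ as a telescoping sum of genuine martingale differences of $b$, which is precisely where the factor $(1+\max(i,j))$ of polynomial growth comes from. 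Your description of ``splitting into paraproduct-type pieces according to scale'' does not by itself make clear that some pieces must be estimated only in combination; if you tried to bound each piece individually by Carleson embedding, you would run into the unboundedness of the $W$-term and the $E^1/E^2$-paraproducts. So the pairing/telescoping step is not an organizational nicety but a load-bearing cancellation, and you should state it explicitly. Apart from that, and a minor inaccuracy (the paper uses no reverse-H\"older ``bumps''; it proceeds through direct $A_{p,q}$-weighted bounds for fractional integrals, Kurtz's strong-maximal result, and a Kahane--Khintchine argument for the vector-valued extension on mixed-norm spaces), your proposal matches the paper's route.
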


Here the weight is of tensor product type. We do not know how to relax this restriction.
This is mainly due to the natural appearance of the mixed-norm spaces,
in which even the boundedness of the strong maximal function with non-tensor product type weights is still open.

The paper is organized as follows.
In Section 2, we collect some preliminary results.
And in Section 3, we give a proof of Theorem~\ref{thm:rep}.  In Section 4, we
present  some results on  mixed-norm spaces and then give a proof of
the Bloom type inequality for fractional integral operators.

\section{Notations and Preliminary Results}

We denote $A\lesssim B$ if $A\leq C\cdot B$ for some constant $C$ that can depend on the dimension of the underlying spaces, on integration exponents, and on various other constants appearing in the assumptions.

We denote the product space $\mathbb R^{n+m}=\mathbb R^n\times\mathbb R^m$.
For any $x\in\mathbb R^{n+m}$, we write $x=(x_1,x_2)$ with $x_1\in\mathbb R^n$ and $x_2\in\mathbb R^m$.

Let $\mu_1$ and $\mu_2$ be measures on $\mathbb R^n$ and $\mathbb R^m$, respectively. For $p_1,p_2$ with $1\leq p_1,p_2<\infty$ and a measurable function $f:\mathbb R^{n+m}\rightarrow \mathbb C$, we define the mixed-norm $\|f\|_{L^{p_2}(L^{p_1})(\mu_2\times\mu_1)}$ by
\begin{align*}
\|f\|_{L^{p_2}(L^{p_1})(\mu_2\times\mu_1)}:=&\bigg(\int_{\mathbb R^m}\bigg(\int_{\mathbb R^n}|f(x_1,x_2)|^{p_1}d\mu_1(x_1)\bigg)^{p_2/p_1}d\mu_2(x_2)\bigg)^{1/p_2}\\
& = \big\|\|f(\cdot,x_2)\|_{L^{p_1}(\mu_1)}\big\|_{L^{p_2}(\mu_2)}.
\end{align*}

\subsection{Random dyadic systems}

Let $\mathscr{D}^{\omega}$ be defined by (\ref{eq:Dw}).
For parameters $r \in \mathbb{Z}_{+}$ and $\gamma\in(0,1/2)$, we call a cube $I\in\mathscr{D}^{\omega}$   bad if there exists some $J\in \mathscr{D}^{\omega}$ such that  $\ell(J)\geq 2^{r}\ell(I)$ and
\[
\dist(I,\partial J) \leq \ell(J)\cdot \Big(\frac{\ell(I)}{\ell(J)}\Big)^{\gamma},
\]
where $\ell(I)$ stands for the side length. In the treatment of a fractional integral operator $I_{\lambda}$ with exponent $\lambda$, the choice $\gamma =\frac{1}{2(\lambda+1)}$ is useful.

We say that a cube  $I\in\mathscr{D}^{\omega}$ is good
if it is not bad. It is well-known that $\mathbb P(\{\omega: I\dotplus \omega \ \ \text{is good}\})$ is independent of the choice of $I\in \mathscr D^0$ and if $r$ is sufficiently large, then
\[
\mathbb P_{\operatorname{good}}:= \mathbb P(\{\omega: I\dotplus \omega \ \ \text{is good}\})>0.
\]
Given two cubes $I,J\in\mathscr{D}^{\omega}$, we denote by $I\vee J$ the smallest cube in $\mathscr{D}^{w}$ that contains both $I$ and $J$. If it does not exist, we denote $I\vee J=\emptyset$.

\subsection{Haar functions and martingale differences}
In one dimension, for an interval $I$, the Haar functions are defined by $h^{0}_{I} = |I|^{-1/2}1_{I}$ and $h^{1}_{I} = |I|^{-1/2}(1_{I_{\ell}}-1_{I_{r}})$, here $I_{\ell}$ and $I_{r}$ are the left and right halves of the interval $I$, respectively. In higher dimensions,
for a cube $I=I_1\times\cdots \times I_n\subset \mathbb{R}^n$,  we  define the Haar functions $h^{\eta}_{I}$ as
\begin{equation}\label{eq:Haar0}
h^{\eta}_{I}(x) = h^{(\eta_{1},\cdots , \eta_{n})}_{I_{1}\times\cdots\times I_{n}}(x_{1},\cdots,x_{n}):=\otimes^{n}_{i=1} h^{\eta_{i}}_{I_{i}}(x_{i}),
\end{equation}
where $\eta\in \{0,1\}^{n}\setminus\{0\}$.

For a locally integrable function $f:\mathbb R^{n}\rightarrow \mathbb C$, the martingale difference $\Delta_I$ associated with a dyadic cube $I\in\Dw$ is defined by
\[
\Delta_I f:=
\sum_{\eta\in \{0,1\}^{n}\setminus\{0\}}\pair{f}{h_{I}^{\eta}}h_{I}^{\eta}.
\]
Set $\bve{f}_{I}:=\frac{1}{|I|}\int_{I}f$. We also write $E_{I}f:=\bve{f}_{I}1_{I}$. We have the usual martingale decomposition
\[
f =\sum_{I\in \mathscr D^\omega} \Delta_I f= \sum_{I\in \mathscr{D}^{\omega}}\sum_{\eta\in \{0,1\}^{n}\setminus\{0\}}\langle f,h^{\eta}_{I}\rangle h^{\eta}_{I}.
\]
Since the $\eta$'s do not play any major role, in the sequel  we just simply write
\begin{equation}\label{eq:haar}
f = \sum_{I\in \mathscr{D}^{\omega}}\langle f,h_{I}\rangle h_{I}.
\end{equation}

A martingale block is denoted by
\[
\Delta_{K,i}f = \sum_{\substack{I\in \Dw\\ I^{(i)}=K}}\Delta_I f, \quad \quad K\in\Dw, i\ge 0,
\]
where $I^{(i)}$ denotes the unique dyadic cube $P\in \Dw$ such that $I\subset P$ and $\ell(P)=2^i\ell(I)$.

By the definition of fractional dyadic shifts, it is  is easy to see that
\[
\big|S^{i,j}_{\lambda,\omega}f(x)\big| \leq \sum_{K\in\Dw} \frac{1}{|K|^{\lambda/n}}\int_{K}|f(y)|dy\cdot 1_{K}(x).
\]
 Cruz-Uribe and  Moen~\cite{UribeMoen} proved that
\begin{equation}\label{eq:frac int and max}
\sum_{K\in\Dw} \frac{1}{|K|^{\lambda/n}}\int_{K} |f(y)| dy\cdot 1_{K}(x) \lesssim I_{\lambda}|f|(x).
\end{equation}
Thus we have
\begin{equation}\label{eq:major}
\big|S^{i,j}_{\lambda,\omega}f(x)\big|\lesssim I_{\lambda}|f|(x).
\end{equation}

\subsection{Weights}
For any $1<p<q<\infty$  and $w \in A_{p,q}(\mathbb R^n) $, a simple calculation gives that $ w^q\in A_{q}(\mathbb R^n)$, $w^{-p'}\in A_{p'}(\mathbb R^n)$ and
$ w^{-q'}\in A_{q'}(\mathbb R^n)$.

In \cite{MuckenhouptWheeden1974}, Muckenhoupt and Wheeden proved the weighted bound for fractional integral operators. Specifically, they showed that if $0<\lambda<n$ and $1/q+1/p'=\lambda/n$, then
\[
\|I_{\lambda}f\|_{L^q(w^q)}\lesssim_{[w]_{A_{p,q}(\mathbb R^n)}}\|f\|_{L^p(w^p)}
\]
if and only if $w(x)\in A_{p,q}(\mathbb R^n)$.
Combining with the inequality (\ref{eq:major}), we have
\begin{equation}\label{eq:s:e14}
\|S^{i,j}_{\lambda,\omega}f\|_{L^q(w^q)}\lesssim_{[w]_{A_{p,q}(\mathbb R^n)}}\|f\|_{L^p(w^p)}.
\end{equation}

\subsection{Maximal functions and martingale difference square functions}
Let $\Dw$ be a dyadic system in $\mathbb R^n$, where $\omega=(\omega_j)_{j\in\mathbb{Z}}\in(\{0,1\}^n)^{\mathbb{Z}}$.
For a measurable function defined on $\mathbb R^n$,
we define
the martingale difference square function $S_{\Dw}f$ by
\[
S_{\Dw}f:=\Big(\sum_{I\in\Dw}|\Delta_I f|^2\Big)^{1/2}.
\]

Wilson~\cite{Wilson} proved the
following weighted estimates for martingale difference square functions.

\begin{proposition}[{\cite[Theorem 2.1]{Wilson}}]\label{prop:stand square}
For any $1<p<\infty$, $w\in A_p(\mathbb R^n)$ and
a measurable function  $f$,  we have the following norm equivalence,
\[
\|f\|_{L^p(w)}\thicksim_{[w]_{A_p(\mathbb R^n)}}\|S_{\Dw}\|_{L^p(w)}.
\]
\end{proposition}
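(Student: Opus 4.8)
\textbf{Proof proposal for Proposition~\ref{prop:stand square}.}
The asserted equivalence splits into two one-sided estimates, which I would establish in the order: first the forward bound
\[
\|S_{\Dw}f\|_{L^p(w)}\lesssim_{[w]_{A_p(\mathbb R^n)}}\|f\|_{L^p(w)},
\]
and then deduce the reverse bound $\|f\|_{L^p(w)}\lesssim_{[w]_{A_p(\mathbb R^n)}}\|S_{\Dw}f\|_{L^p(w)}$ from it by duality.

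\emph{Forward bound.} The key observation is that $S_{\Dw}$ is an $\ell^2$-valued dyadic Calderón--Zygmund operator. Writing $\Delta_I f(x)=\int K_I(x,y)f(y)\,dy$ with $K_I(x,y)=\sum_{\eta\in\{0,1\}^n\setminus\{0\}}h_I^\eta(x)h_I^\eta(y)$, one has $S_{\Dw}f(x)=\big\|(\Delta_I f(x))_{I\in\Dw}\big\|_{\ell^2}$, i.e.\ $S_{\Dw}$ is the operator with $\ell^2$-valued kernel $\vec K(x,y)=(K_I(x,y))_{I}$. Its unweighted $L^2$-boundedness is precisely Parseval's identity for the Haar system, $\|S_{\Dw}f\|_{L^2}^2=\sum_{I,\eta}|\pair{f}{h_I^\eta}|^2=\|f\|_{L^2}^2$, and the size/smoothness bounds $\|\vec K(x,y)\|_{\ell^2}\lesssim|x-y|^{-n}$ together with the dyadic structure put $S_{\Dw}$ under the scope of the standard weighted theory for (vector-valued) Calderón--Zygmund operators; in particular one obtains the sparse bound, hence $\|S_{\Dw}f\|_{L^p(w)}\lesssim_{[w]_{A_p}}\|f\|_{L^p(w)}$ for every $1<p<\infty$ and $w\in A_p(\mathbb R^n)$. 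Alternatively, and more self-contained, one runs a good-$\lambda$ inequality comparing $S_{\Dw}f$ with the dyadic maximal function $M_{\Dw}$, and combines it with Muckenhoupt's weighted bound for $M_{\Dw}$ on $L^p(w)$; this is essentially the argument of \cite{Wilson}.

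\emph{Reverse bound.} Let $f$ be a finite linear combination of Haar functions, and similarly $g$. Orthogonality of the martingale differences gives $\int fg\,dx=\sum_{I\in\Dw}\int\Delta_I f\,\Delta_I g\,dx$, and pointwise Cauchy--Schwarz in the index $I$ yields $\big|\int fg\,dx\big|\le\int S_{\Dw}f\cdot S_{\Dw}g\,dx$. Applying Hölder's inequality with the dual pair $L^p(w)$, $L^{p'}(w^{1-p'})$, and then the forward bound to the exponent $p'$ and the weight $w^{1-p'}$ — which lies in $A_{p'}(\mathbb R^n)$ with $[w^{1-p'}]_{A_{p'}}=[w]_{A_p}^{1/(p-1)}$ — gives
\[
\Big|\int fg\,dx\Big|\le\|S_{\Dw}f\|_{L^p(w)}\,\|S_{\Dw}g\|_{L^{p'}(w^{1-p'})}\lesssim_{[w]_{A_p}}\|S_{\Dw}f\|_{L^p(w)}\,\|g\|_{L^{p'}(w^{1-p'})}.
\]
Taking the supremum over all such $g$ with $\|g\|_{L^{p'}(w^{1-p'})}\le1$, and using the duality $(L^p(w))^{\ast}=L^{p'}(w^{1-p'})$ under the pairing $\int fg\,dx$, gives the reverse bound for finite Haar sums $f$; since these are dense in $L^p(w)$ and the forward bound is already in hand, a routine limiting argument extends it to all $f\in L^p(w)$.

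\emph{Main obstacle.} Everything except the weighted boundedness of the square function in the forward step is formal. That step is the heart of the matter: qualitatively it is classical, and quantitatively (with dependence only on $[w]_{A_p}$) it follows from sparse domination or the $A_2$-theorem; alternatively the good-$\lambda$ argument of \cite{Wilson} delivers it directly, which is the route we follow.
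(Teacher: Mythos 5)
The paper does not give its own proof of this proposition: it is stated as a citation to Wilson's book (Theorem~2.1 there), and the paper also quotes, as Proposition~\ref{prop:stand mds func}, the Cruz-Uribe--Martell--P\'erez sharp version of the forward inequality. So there is no internal argument to compare yours against, only a reference. Your sketch is correct and reproduces the standard route to this equivalence: the forward bound $\|S_{\Dw}f\|_{L^p(w)}\lesssim_{[w]_{A_p}}\|f\|_{L^p(w)}$ is the substantive step, and the reverse bound follows by the polarization identity $\int fg=\sum_{I}\int\Delta_I f\,\Delta_I g$ together with pointwise Cauchy--Schwarz, H\"older, and the forward bound applied to $g$, $p'$, and $w^{1-p'}\in A_{p'}$ with $[w^{1-p'}]_{A_{p'}}=[w]_{A_p}^{1/(p-1)}$; the density/limiting step is unproblematic since the reverse triangle inequality in $\ell^2$ gives $|S_{\Dw}f_n-S_{\Dw}f|\le S_{\Dw}(f_n-f)$. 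One small caution on your framing of the forward step: the $\ell^2$-valued kernel $\vec K(x,y)=(K_I(x,y))_I$ does satisfy the size bound $\|\vec K(x,y)\|_{\ell^2}\lesssim|x-y|^{-n}$, but each $K_I$ is discontinuous across dyadic grid faces, so the classical H\"older-type smoothness estimate fails and one cannot literally invoke the textbook vector-valued Calder\'on--Zygmund theorem. This is why the honest routes are either the good-$\lambda$ comparison with $M_{\Dw}$ (Wilson's argument, which you correctly name as the one you ultimately follow), or the dyadic sparse-domination machinery, or an appeal to a Dini/H\"ormander-type smoothness condition which the dyadic kernel does satisfy. With that caveat acknowledged, your argument is sound.
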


And Cruz-Uribe, Martell and  P\'{e}rez~\cite{UribeMartellPerez2012} gave the following sharp  estimate.

\begin{proposition}[{\cite[Theorem 1.8]{UribeMartellPerez2012}}] \label{prop:stand mds func}
Given $p, 1<p<\infty$, then for any measurable function $f$ and
$w\in A_{p}(\mathbb R^n)$,
\[
\|S_{\Dw}f\|_{L^p(w)}\leq C_{n,p}[w]_{A_{p}(\mathbb R^n)}^{\max(\frac{1}{2},\frac{1}{p-1})}\|f\|_{L^p(w)}.
\]
Further, the exponent $\max(\frac{1}{2},\frac{1}{p-1})$ is the best possible.
\end{proposition}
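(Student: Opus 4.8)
The plan is to derive the inequality from a sparse domination of $S_{\Dw}$ together with the known sharp weighted bounds for sparse operators, and to derive the optimality of the exponent from power-weight examples. For the first ingredient, I would fix $f$ and run a Calder\'on--Zygmund stopping-time argument — equivalently, invoke Lerner's local mean oscillation decomposition — to produce a sparse family $\mathcal S=\mathcal S(f)\subset\Dw$ (pairwise disjoint sets $E_Q\subset Q$ with $|E_Q|\ge\frac12|Q|$) such that
\[
S_{\Dw}f(x)\lesssim\Big(\sum_{Q\in\mathcal S}\ave{|f|}_Q^2\,1_Q(x)\Big)^{1/2}=:A^{(2)}_{\mathcal S}f(x)\qquad\text{for a.e. }x,
\]
the point being to control $|\Delta_I f|$ by the oscillation of $f$ over the dyadic parent of $I$, to group the cubes of $\Dw$ into stopping generations on which these oscillations are comparable, and to sum the resulting geometric series; the quadratic structure of $S_{\Dw}$ is inherited by $A^{(2)}_{\mathcal S}$.

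It then suffices to bound $A^{(2)}_{\mathcal S}$ on $L^p(w)$ by $[w]_{A_p(\mathbb R^n)}^{\max(1/2,\,1/(p-1))}$, uniformly over sparse $\mathcal S$. For $1<p\le2$ this follows at once from the pointwise estimate $A^{(2)}_{\mathcal S}f\le\sum_{Q\in\mathcal S}\ave{|f|}_Q1_Q=:A_{\mathcal S}f$ (valid because $\sum a_Q^2\le(\sum a_Q)^2$ for $a_Q\ge0$) together with the sharp weighted bound $\|A_{\mathcal S}\|_{L^p(w)\to L^p(w)}\lesssim[w]_{A_p}^{\max(1,\,1/(p-1))}$ for the linear sparse operator, since $\max(1/2,1/(p-1))=1/(p-1)$ when $p\le2$. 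For $p\ge2$ the linear domination is too lossy, so I would instead write $\|A^{(2)}_{\mathcal S}f\|_{L^p(w)}^2=\|(A^{(2)}_{\mathcal S}f)^2\|_{L^{p/2}(w)}$, dualize against $0\le g$ with $\|g\|_{L^{(p/2)'}(w)}=1$, and reduce to the sparse form $\sum_{Q\in\mathcal S}\ave{|f|}_Q^2\ave{gw}_Q|Q|$. Splitting $\ave{|f|}_Q$ by H\"older with respect to $\sigma\,dx$, where $\sigma:=w^{1-p'}$, and $\ave{gw}_Q$ by H\"older with respect to $w\,dx$, one extracts the factor $\ave{w}_Q^{2/p}\ave{\sigma}_Q^{2/p'}\le[w]_{A_p}^{2/p}$ from the $A_p$ condition and is left with a sparse form that closes through the dyadic Carleson embedding theorem, the sparseness of $\mathcal S$ supplying the packing condition with the $A_p$-characteristic of $w$ (and $[\sigma]_{A_{p'}}=[w]_{A_p}^{1/(p-1)}$) entering the packing constants; a careful accounting then yields the exponent $\max(1/2,1/(p-1))$, which equals $1/(p-1)$ for $2\le p\le3$ and $1/2$ for $p\ge3$, the transition occurring where the $f$- and $g$-side losses balance. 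Combining the two steps gives the asserted inequality.

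For optimality I would test against power weights $w_a(x)=|x|^a$ on $\mathbb R^n$, which lie in $A_p$ exactly for $-n<a<n(p-1)$, with $[w_a]_{A_p}\sim(n(p-1)-a)^{-(p-1)}$ as $a\uparrow n(p-1)$ and $[w_a]_{A_p}\sim(a+n)^{-1}$ as $a\downarrow-n$, paired with a truncated test function adapted to $w_a$ near the origin; letting $a$ tend to the endpoint of the $A_p$-interval relevant to the given $p$ and comparing the asymptotics of $\|S_{\Dw}f\|_{L^p(w_a)}$ and $\|f\|_{L^p(w_a)}$ shows the power of $[w_a]_{A_p}$ cannot be lowered below $\max(1/2,1/(p-1))$. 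I expect the main obstacle to be the second step for $2\le p\le3$: extracting precisely $1/(p-1)$ requires apportioning the H\"older losses on the $f$- and $g$-sides optimally and summing through the dyadic Carleson embedding theorem rather than through the maximal function — a naive H\"older in the cube-sum overestimates, since sparse families have unbounded pointwise overlap and $M$ is unbounded at the relevant endpoint — with a secondary difficulty in building test pairs $(w_a,f)$ whose two norms can both be evaluated sharply as $a$ tends to the endpoint.
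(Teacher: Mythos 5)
The paper does not prove this proposition at all: it is quoted verbatim as \cite[Theorem 1.8]{UribeMartellPerez2012} and used as a black box, so there is no internal proof to compare against. Measured against the cited reference, your outline gets the first half right but has a genuine gap in the second half.

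For $1<p\le 2$ your reduction is correct and clean: the sparse pointwise domination $S_{\Dw}f\lesssim A_{\mathcal S}^{(2)}f$, the trivial majorization $A_{\mathcal S}^{(2)}f\le A_{\mathcal S}f$, and the sharp bound $\|A_{\mathcal S}\|_{L^p(w)\to L^p(w)}\lesssim[w]_{A_p}^{1/(p-1)}$ together give the claimed exponent, since $\max(\tfrac12,\tfrac1{p-1})=\tfrac1{p-1}$ there. For $p>2$, however, the H\"older-plus-Carleson accounting you describe does not close with the right power. Running it out: writing $\ave{|f|}_Q=\ave{\sigma}_Q\ave{|f|\sigma^{-1}}^{\sigma}_Q$ and $\ave{gw}_Q=\ave{w}_Q\ave{g}^{w}_Q$ with $\sigma=w^{1-p'}$, the split $\ave{\sigma}_Q^2\ave{w}_Q|Q|\le[w]_{A_p}^{2/p}\sigma(Q)^{2/p}w(Q)^{(p-2)/p}$ followed by H\"older with exponents $p/2$ and $(p/2)'$ in the cube sum reduces to two Carleson embeddings, one for $\sigma$ and one for $w$. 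The sparse family gives packing constants $[\sigma]_{A_\infty}\le[w]_{A_p}^{1/(p-1)}$ and $[w]_{A_\infty}\le[w]_{A_p}$, and the resulting exponent is $\tfrac{2}{p}+\tfrac{2}{p(p-1)}+\tfrac{p-2}{p}=1+\tfrac{2}{p(p-1)}$ for the dualized form, i.e.\ $\tfrac12+\tfrac1{p(p-1)}$ for $S_{\Dw}$ itself — strictly larger than $\max(\tfrac12,\tfrac1{p-1})$ for every $p>2$. The ``careful accounting'' you defer to is exactly where the argument breaks: there is no apportionment of the H\"older losses within this scheme that removes the extra $\tfrac1{p(p-1)}$, because both packing constants genuinely cost a power of the characteristic.

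What the cited source (and Lerner's earlier paper it relies on) actually does for $p\ge 3$ is qualitatively different. One first proves the single endpoint estimate $\|S_{\Dw}f\|_{L^3(w)}\lesssim[w]_{A_3}^{1/2}\|f\|_{L^3(w)}$, and this works because at $p=3$ one has $\sigma=w^{-1/2}$, so $\sigma^{2/3}w^{1/3}\equiv1$: after inserting the sparse sets $E_Q$ one can pass to $\int(M_\sigma(|f|\sigma^{-1}))^2\,(M_wg)\,dx$, split with H\"older using the weights $\sigma$ and $w$ themselves (no Carleson packing is needed), and use only the universal $L^3$ bounds for $M_\sigma$ and $M_w$. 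One then obtains the remaining range $p\neq3$ by sharp Rubio de Francia extrapolation from $p_0=3$ with $N(t)=t^{1/2}$, which gives exactly $\max(\tfrac12,\tfrac1{p-1})$ — note that this extrapolation theorem is precisely the paper's Proposition \ref{prop:extropolation}, so the tool is even at hand, but your plan never invokes it. Your sketch of the optimality via power weights $|x|^a$ is the standard route and is fine in outline. To fix the proof, replace the ``direct sparse accounting for all $p$'' by ``prove $p=3$ by the $M_\sigma$/$M_w$ duality trick, then extrapolate''; alternatively, cite the sharp weighted bound for the positive quadratic sparse operator directly rather than re-deriving it.
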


Next we introduce some   notations on the product space $\mathbb R^{n+m}$.
Given a measurable function $f$ defined on $\mathbb R^{n+m}$, we define the strong maximal function $M_{S}$ by
\[
M_{S}f(x_1,x_2):=\sup_{R\ni (x_1,x_2)}\frac{1}{|R|}\int_{R}|f(y_1,y_2)|dy_1dy_2,
\]
where $R=Q\times Q'$ and $Q\subset\mathbb R^n$ and $Q'\subset\mathbb R^m$ are cubes with sides parallel to the axes.

Let $\Dw$ and  $\mathscr D^{\beta}$
be dyadic systems in $\mathbb R^n$ and $\mathbb R^m$, respectively, where $\omega=(\omega_j)_{j\in\mathbb{Z}}\in(\{0,1\}^n)^{\mathbb{Z}}$ and   $\beta=(\beta_j)_{j\in\mathbb{Z}}\in (\{0,1\}^m)^{\mathbb{Z}}$.
 We define the dyadic maximal functions by
\[
M_{\Dw}^1f(x_1,x_2):=\sup_{I\ni x_1,I\in\Dw} \frac{1}{|I|}\int_{I}|f(y_1,x_2)|dy_1,
\]
\[
M_{\Db}^2f(x_1,x_2):=\sup_{J\ni x_2,J\in\Db} \frac{1}{|J|}\int_{J}|f(x_1,y_2)|dy_2,
\]
\[
M_{\Dw,\Db}f(x_1,x_2):=\sup_{\substack{I\ni x_1,I\in\Dw \\  J\ni x_2, J\in\Db}}\frac{1}{|I|\times|J|}\int_{J}\int_{I}|f(y_1,y_2)|dy_1dy_2.
\]
It is obvious that $M^1_{\Dw}f \leq M_{S}f$,
$M^2_{\Db}f \leq M_{S}f$ and $M_{\Dw,\Db}f \leq M_{S}f$, thanks to
the Lebesgue differentiation theorem.

Let $0<\lambda_1<n$ and $0<\lambda_2<m$. We define the partial
fractional maximal functions by
\[
M_{\lambda_1,\Dw}^1f(x_1,x_2):=\sup_{I\ni x_1,I\in\Dw} \frac{1}{|I|^{\lambda_1/n}}\int_{I}|f(y_1,x_2)|dy_1.
\]
\[
M_{\lambda_2,\Db}^2f(x_1,x_2):=\sup_{J\ni x_2,J\in\Db} \frac{1}{|J|^{\lambda_2/m}}\int_{J}|f(x_1,y_2)|dy_2.
\]
By \eqref{eq:frac int and max}, it is easy to see that
\begin{equation}\label{eq:s:e5}
  M_{\lambda_1,\Dw}^1f(x_1,x_2)\lesssim I_{\lambda_1}^1|f|(x_1,x_2).
\end{equation}
Similarly,
\begin{equation}\label{eq:s:e6}
  M_{\lambda_2,\Db}^2f(x_1,x_2)\lesssim I_{\lambda_2}^2|f|(x_1,x_2).
\end{equation}

And we define the martingale difference square functions on the product space
by
\begin{align*}
S_{\Dw}^1f&=\Big(\sum_{I\in\Dw}|\Delta_{I}^1f|^2\Big)^{1/2},\\ S_{\Db}^2f&=\Big(\sum_{J\in\Db}|\Delta_{J}^2f|^2\Big)^{1/2},\\   S_{\Dw,\Db}f&=\Big(\sum_{\substack{I\in\Dw\\J\in\Db}}|\Delta_{I\times J}f|^2\Big)^{1/2},
\end{align*}
where $
\Delta_{I}^1f(x) =\Delta_{I}(f(\cdot,x_2))(x_1)$,
$\Delta_{J}^2f(x) =\Delta_{J}(f(x_1,\cdot))(x_2)$ and $\Delta_{I\times J}f(x) =\Delta_J^{2}(\Delta_I^1f)(x)$.

Similarly we set $E_{I}^1:=E_{I}(f(\cdot,x_2))(x_1)$ and $E_{J}^2:=E_{J}(f(x_1,\cdot))(x_2)$.

Notice that $\Delta_I^1f=h_I\otimes \pair{f}{h_I}_1,\Delta_J^2=\pair{f}{h_J}_2\otimes h_J$ and $\Delta_{I\times J}f=\pair{f}{h_I\otimes h_J}h_I\otimes h_J$, where $\pair{f}{h_I}_1$ is defined by
\[
\pair{f}{h_I}_1(x_2)=\int_{\mathbb R^n}f(y_1,x_2)h_I(y_1)dy_1,
\]
and $\pair{f}{h_J}_2$ is defined similarly.

The Martingale blocks are defined in the natural way,
\[
\Delta_{K\times V}^{i,j}f=\sum_{\substack{I\in\Dw\\I^{(i)}=K}}\sum_{\substack{J\in\Db\\J^{(j)}=V}}\Delta_{I\times J}f.
\]

\section{Proof of Theorem~\ref{thm:rep}}

In this section, we give a proof of Theorem~\ref{thm:rep}.
First, we introduce a result by Hyt\"onen~\cite{Hytonen17}.

\begin{lemma}[{\cite[Lemma 3.7]{Hytonen17}}]\label{lem:majorant}
Let $I,J\in\mathscr{D}^{\omega}$ be such that $I$ is good, $I\cap J=\emptyset$ and $\ell(I)\leq \ell(J)$. Then there exists some $K\supset I\cup J$ which satisfies
\begin{align*}
&\ell(K) \leq 2^r\ell(I),  &\text{if}\   \dist(I,J)\leq \ell(J)\cdot \Big(\frac{\ell(I)}{\ell(J)}\Big)^{1/2(\lambda +1)},\\
&\ell(K)\cdot \Big(\frac{\ell(I)}{\ell(K)}\Big)^{1/2(\lambda +1)} \leq 2^r \dist(I,J),  &\text{if} \   \dist(I,J)> \ell(J)\cdot \Big(\frac{\ell(I)}{\ell(J)}\Big)^{1/2(\lambda +1)}.
\end{align*}
\end{lemma}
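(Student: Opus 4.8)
The plan is to take $K$ to be the minimal common dyadic ancestor $I\vee J$ and to read off \emph{both} alternatives from the goodness of $I$ applied to a single, well-chosen ancestor. Throughout write $\gamma=\frac1{2(\lambda+1)}$, the exponent appearing both in the goodness condition and in the statement. (In the application to the representation theorem all functions lie in $L^\infty_c$, so one only encounters dyadic cubes inside a fixed large ambient cube and $I\vee J$ always exists; if one prefers not to make this reduction, the degenerate case $I\vee J=\emptyset$ forces $I$ and $J$ to be widely separated, and the same estimates then apply to a suitable non-dyadic cube built from an ancestor of $I$ of side $\approx\max(\ell(J),2^r\ell(I))$, at the cost of harmless constants. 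I would dispose of this in one line and concentrate on $K=I\vee J$.)

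First I would record the elementary geometry. Since $I,J\neq\emptyset$ and $I\cap J=\emptyset$, the cube $K=I\vee J$ properly contains both $I$ and $J$, hence $\ell(I)\le\ell(J)\le\ell(K)/2$. Let $K'$ be the child of $K$ that contains $I$; then $\ell(K')=\ell(K)/2\ge\ell(J)\ge\ell(I)$, and by minimality of $K$ we have $J\not\subseteq K'$, so that — since $\ell(J)\le\ell(K')$ and dyadic cubes are nested — $J\cap K'=\emptyset$. Consequently $\dist(I,J)\ge\dist\big(I,(K')^{c}\big)=\dist(I,\partial K')$. This one inequality, fed into the goodness of $I$ through the ancestor $K'$, is what drives the argument.

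Then I would run a dichotomy on the size of $K'$. If $\ell(K')\ge 2^{r}\ell(I)$, then $K'$ is an ancestor of the good cube $I$ to which the goodness inequality applies, giving $\dist(I,\partial K')>\ell(K')\big(\ell(I)/\ell(K')\big)^{\gamma}=\ell(I)^{\gamma}\ell(K')^{1-\gamma}$. Since $\ell(K')\ge\ell(J)$ and $1-\gamma>0$, this already yields $\dist(I,J)>\ell(J)\big(\ell(I)/\ell(J)\big)^{\gamma}$, so we are necessarily in the second alternative; and substituting $\ell(K')=\ell(K)/2$ gives $\ell(K)\big(\ell(I)/\ell(K)\big)^{\gamma}<2^{1-\gamma}\dist(I,J)\le 2^{r}\dist(I,J)$ (using $\gamma>0$ and $r\ge1$), which is the required bound. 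If instead $\ell(K')<2^{r}\ell(I)$, then — both being powers of two — $\ell(K)=2\ell(K')\le 2^{r}\ell(I)$; this is exactly the conclusion of the first alternative, and it also suffices in the second alternative, since whenever $\dist(I,J)>\ell(J)\big(\ell(I)/\ell(J)\big)^{\gamma}=\ell(I)^{\gamma}\ell(J)^{1-\gamma}\ge\ell(I)$ we get $\ell(K)\big(\ell(I)/\ell(K)\big)^{\gamma}\le\ell(K)\le 2^{r}\ell(I)<2^{r}\dist(I,J)$.

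I expect the only delicate point to be bookkeeping rather than any genuine estimate: verifying that the rounding step ($\ell(K')<2^{r}\ell(I)\Rightarrow\ell(K)\le 2^{r}\ell(I)$) is legitimate, that $2^{1-\gamma}\le 2^{r}$, and that in each branch the alternative one lands in is precisely the one claimed (in the first branch the hypothesis of the first alternative in fact fails, so that conditional is vacuous). The existence of $I\vee J$ is the only other loose end, handled by the ambient-cube reduction mentioned above.
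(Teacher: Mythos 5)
Your argument is correct and is essentially the standard proof of Hyt\"onen's lemma (which this paper cites without supplying its own proof): take $K=I\vee J$, pass to the child $K'$ of $K$ containing $I$, observe $J\cap K'=\emptyset$ so $\dist(I,J)\geq\dist(I,\partial K')$, and then run the dichotomy on whether $\ell(K')\geq 2^{r}\ell(I)$, applying the goodness of $I$ through the ancestor $K'$ in the large-$K'$ branch. I checked the bookkeeping you flagged as delicate and it all goes through: in the branch $\ell(K')\geq 2^{r}\ell(I)$ the first alternative's hypothesis is vacuous and $2^{1-\gamma}\leq 2^{r}$ because $\gamma>0$ and $r\geq 1$; in the branch $\ell(K')<2^{r}\ell(I)$ the power-of-two rounding indeed gives $\ell(K)=2\ell(K')\leq 2^{r}\ell(I)$, and the second alternative follows from $\ell(K)(\ell(I)/\ell(K))^{\gamma}\leq\ell(K)$ together with $\dist(I,J)>\ell(I)^{\gamma}\ell(J)^{1-\gamma}\geq\ell(I)$.

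One small imprecision in the preamble: you suggest that $I\vee J=\emptyset$ is a genuine degenerate case to be handled separately (via a non-dyadic replacement cube or an ambient-cube reduction). In fact the hypotheses already exclude it. If $I$ is good and $\ell(J)\geq\ell(I)$, suppose no dyadic ancestor of $I$ contained $J$; then for every ancestor $K_i=I^{(i)}$ with $\ell(K_i)\geq\ell(J)$ one would have $J\cap K_i=\emptyset$ (nestedness), hence $\dist(I,J)\geq\dist(I,\partial K_i)$, and for $\ell(K_i)\geq 2^{r}\ell(I)$ goodness gives $\dist(I,\partial K_i)>\ell(K_i)(\ell(I)/\ell(K_i))^{\gamma}=\ell(I)^{\gamma}\ell(K_i)^{1-\gamma}\to\infty$, a contradiction. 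So $I\vee J$ always exists under the stated hypotheses, and the ``widely separated'' fallback is unnecessary (and, as stated, the implication is actually backwards: goodness makes wide separation impossible rather than something one falls back on). This is cosmetic and does not affect the validity of the main argument.
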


To prove the main result, we also need the following lemma.

\begin{lemma}\label{lem:vanish}
Let  $I$ and $J$ be cubes in $\mathbb{R}^n$. If $I$ and $J$ share the same center, then
 $
 \langle 1_{J},I_{\lambda}h_{I} \rangle = 0
 $.
\end{lemma}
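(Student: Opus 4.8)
The plan is to exploit a symmetry (parity/reflection) argument. Since $I$ and $J$ share the same center $c$, both sets are symmetric about the map $x \mapsto 2c - x$. The Haar function $h_I$ on a cube decomposes as a tensor product $h_I = \otimes_{i=1}^n h_{I_i}^{\eta_i}$ with $\eta \neq 0$, so in at least one coordinate direction $i_0$ we have $\eta_{i_0} = 1$, meaning $h_{I_{i_0}}^{1}$ is odd with respect to reflection about the center of $I_{i_0}$. Reflecting about the common center $c$ therefore sends $h_I$ to $-h_I$ (or more precisely, $h_I(2c-x) = (-1)^{|\{i:\eta_i=1\}|} h_I(x)$, which is $-h_I$ when that count is odd; when it is even one must instead use the oddness in the single coordinate $i_0$ — see below).

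The key steps, in order: First I would write $\langle 1_J, I_\lambda h_I\rangle = \int_J \int_I \frac{h_I(y)}{|x-y|^\lambda}\,dy\,dx$. Second, I would perform the change of variables $x \mapsto 2c - x$ on the outer integral and $y \mapsto 2c - y$ on the inner integral simultaneously; since $J$ and $I$ are each invariant under reflection through $c$, the domains are unchanged, and $|x-y|^\lambda$ is invariant because $|(2c-x)-(2c-y)| = |y-x| = |x-y|$. This yields $\langle 1_J, I_\lambda h_I\rangle = \int_J\int_I \frac{h_I(2c-y)}{|x-y|^\lambda}\,dy\,dx$. Third, I would identify $h_I(2c - y)$. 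Writing $c = (c_1,\dots,c_n)$ where $c_i$ is the center of $I_i$, we have $h_{I_i}^{0}(2c_i - t) = h_{I_i}^{0}(t)$ and $h_{I_i}^{1}(2c_i - t) = -h_{I_i}^{1}(t)$, so $h_I(2c-y) = (-1)^{N}h_I(y)$ where $N = \#\{i : \eta_i = 1\} \geq 1$. Fourth, if $N$ is odd we are done immediately: $\langle 1_J, I_\lambda h_I\rangle = -\langle 1_J, I_\lambda h_I\rangle$, hence it is zero.

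The one subtlety — which I expect to be the only real obstacle — is the case where $N$ is even (e.g. $\eta = (1,1,0,\dots,0)$), for which the global reflection through $c$ gives no information. To handle this I would instead use a partial reflection in just one coordinate $i_0$ with $\eta_{i_0} = 1$: reflect only the $i_0$-th coordinates of both $x$ and $y$ about $c_{i_0}$. Again $J$ and $I$ are invariant (as products of intervals, the $i_0$-th factors $J_{i_0}$ and $I_{i_0}$ share center $c_{i_0}$), and $|x-y|$ is preserved since only one matched coordinate pair is reflected and $|x_{i_0}-y_{i_0}|$ is unchanged. Under this map $h_I \mapsto -h_I$ because $h_{I_{i_0}}^{1}$ is odd about $c_{i_0}$ while all other factors are untouched. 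This gives $\langle 1_J, I_\lambda h_I\rangle = -\langle 1_J, I_\lambda h_I\rangle = 0$ in all cases. I would present the argument directly via this single-coordinate reflection, since it covers every $\eta \neq 0$ uniformly and makes the proof a clean two-line change of variables; I would also remark that absolute integrability (justifying the change of variables and Fubini) holds because $1/|x-y|^\lambda$ is locally integrable for $\lambda < n$ and $1_J, h_I$ are bounded with compact support.
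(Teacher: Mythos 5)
Your proposal is correct and uses essentially the same idea as the paper: both proofs exploit that $h_I$ is odd under reflection in a single coordinate $i_0$ with $\eta_{i_0}=1$ while $|x-y|$ and the domains $I,J$ are invariant under that reflection. The paper carries out this same single-coordinate reflection after first splitting $I$ and $J$ into upper/lower halves and pairing $A_1$ with $A_2$, $A_3$ with $A_4$, whereas your final form applies the change of variables once to the whole double integral to get $\langle 1_J, I_\lambda h_I\rangle = -\langle 1_J, I_\lambda h_I\rangle$ directly; your initial detour through a full reflection (which fails when the number of nonzero $\eta_i$ is even) is unnecessary, and you correctly discard it in favor of the single-coordinate argument that the paper also uses.
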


\begin{proof}
For $\eta\in \{0,1\}^{n}\setminus\{0\}$, let the  Haar functions
$h^{\eta}_{I}$ be defined by (\ref{eq:Haar0}).
Observe that there exists some $i$  such that $\eta_{i}=1$.
Without loss of generality, we  assume that  $i=1$.
We denote the center of the interval $I_1$ by $c$.
Moreover, we further divide cubes $I$ and $J$ into the following parts,
\[
J^1 = \{x:x\in J,x_1>c\},\qquad J^2 = \{x:x\in J,x_1<c\},
\]
\[
I^1 = \{y:y\in I,y_1>c\},\qquad I^2 = \{y:y\in I,y_1<c\}.
\]
Then
\begin{align*}
\langle 1_{J},I_{\lambda}h_{I} \rangle
&= \langle 1_{J^1},I_{\lambda}(1_{I^1}\cdot h_{I}) \rangle+\langle 1_{J^2},I_{\lambda}(1_{I^2}\cdot h_{I}) \rangle \\
&\qquad +\langle 1_{J^1},I_{\lambda}(1_{I^2}\cdot h_{I}) \rangle+\langle 1_{J^2},I_{\lambda}(1_{I^1}\cdot h_{I}) \rangle\\
&=: A_1+A_2+A_3+A_4,
\end{align*}
where
\[
A_1 = \int_{J^1}\int_{I^1}\frac{h_{I}(y)}{|x-y|^{\lambda}}dydx
\]
and
\begin{align*}
A_2 =& \int_{J^2}\int_{I^2}\frac{h_{I}(y)}{|x-y|^{\lambda}}dydx\\
=& \int_{J^1}\int_{I^1}\frac{h_{I}(2c-y_1,y_2,y_3,\cdots, y_n)}{|(2c-x_1-(2c-y_1),x_2-y_2,x_3-y_3,\cdots,x_n-y_n)|^{\lambda}}dydx\\
=& \int_{J^1}\int_{I^1}\frac{-h_{I}(y)}{|x-y|^{\lambda}}dydx\\
=& -A_1.
\end{align*}
So we have $A_1+A_2=0$. Similarly we can show that $A_3+A_4=0$. Therefore,
\[
\langle 1_{J},I_{\lambda}h_{I} \rangle = 0.
\]
This completes the proof.
\end{proof}

We are now ready to give a proof of Theorem~\ref{thm:rep}.

\begin{proof}[Proof of Theorem \ref{thm:rep}]
Using \eqref{eq:haar} to expand $f$ and $g$, we have
\begin{align*}
\langle g, I_{\lambda}f\rangle&=  \mathbb{E}_{\omega}\sum_{ I,J\in\mathscr{D}^{\omega} }\langle g,h_{J} \rangle \langle h_{J},I_{\lambda}h_{I}\rangle \langle h_{I},f\rangle\\
&=  \mathbb{E}_{\omega}\sum_{\substack{I,J\in\mathscr{D}^{\omega}\\ \ell(I)\leq \ell(J) }}\langle g,h_{J} \rangle \langle h_{J},I_{\lambda}h_{I}\rangle \langle h_{I},f\rangle+ \mathbb{E}_{\omega}\sum_{\substack{I,J\in\mathscr{D}^{\omega}\\ \ell(I)> \ell(J) }}\langle g,h_{J} \rangle \langle h_{J},I_{\lambda}h_{I}\rangle \langle h_{I},f\rangle.
\end{align*}
Since the goodness of a cube is independent of its position, we have
\begin{align*}
\langle g, I_{\lambda}f\rangle&= \pi_{\operatorname{good}}^{-1} \mathbb{E}_{\omega}\sum_{\substack{I,J\in\mathscr{D}^{\omega}\\ \ell(I)\leq \ell(J) \\ I \ \text{is good}}}\langle g,h_{J} \rangle \langle h_{J},I_{\lambda}h_{I}\rangle \langle h_{I},f\rangle  \\
&\qquad +  \pi_{\operatorname{good}}^{-1}\mathbb{E}_{\omega}\sum_{\substack{I,J\in\mathscr{D}^{\omega}\\ \ell(I)> \ell(J)\\ J \ \text{is good} }}\langle g,h_{J} \rangle \langle h_{J},I_{\lambda}h_{I}\rangle \langle h_{I},f\rangle\\
&=: \text{I}+\text{II}.
\end{align*}

First, we estimate the first term  $\text I$.
We have
\begin{align*}
&\sum_{\substack{I,J\in\mathscr{D}^{\omega}\\ \ell(I)\leq \ell(J)\\ I\ \text{is good}}}
 \langle g,h_{J} \rangle \langle h_{J},I_{\lambda}h_{I}\rangle \langle h_{I},f\rangle \\
 &=
 \sum_{\substack{I,J\in\mathscr{D}^{\omega}, \ \ell(I)\leq \ell(J)\\ \dist(I,J) > \ell(J)\cdot (\ell(I)/\ell(J))^{1/2(\lambda+1)}\\ I\  \text{is good}}}
  + \sum_{\substack{I,J\in\mathscr{D}^{\omega}, \ \ell(I)\leq \ell(J)\\ \dist(I,J) \leq \ell(J)\cdot (\ell(I)/\ell(J))^{1/2(\lambda+1)}\\ I\cap J = \varnothing \\ I \ \text{is good}}}\\
 &\qquad +\sum_{\substack{I,J\in\mathscr{D}^{\omega}, \ \ell(I)\leq \ell(J)\\ I\subseteq J;\ell(I)\geq 2^{-r}\ell(J)\\ I\ \text{is good}}}
+ \sum_{\substack{I,J\in\mathscr{D}^{\omega}, \ \ell(I)\leq \ell(J)\\
 I\varsubsetneq J;\ell(I)< 2^{-r}\ell(J)\\ I\ \text{is good}}}\\
 &=:\sigmaout+\sigmanear+\sigmashallow+\sigmadeep.
\end{align*}
Now we estimate the four terms separately.

{\textbf{The term $\sigmaout$}}. By Lemma~\ref{lem:majorant}, we know that
$K=I\vee J$ exists and
\[
\ell(K)\Big( \frac{\ell(I)}{\ell(K)}\Big)^{1/2(\lambda+1)}\leq 2^r \dist(I,J).
\]
We write
\[
\sigmaout = \sum^{\infty}_{j=1} \sum^{\infty}_{i=j} \sum_{K\in \mathscr{D}^{w}} \sum_{\substack{I,J\in\mathscr{D}^{w}\\I\vee J =K;I^{(i)}=K,J^{(j)}=K\\ \dist(I,J) > \ell(J)\cdot (\ell(I)/\ell(J))^{1/2(\lambda+1)}\\ I\ \text{is good}}}\langle g,h_{J} \rangle \langle h_{J},I_{\lambda}h_{I}\rangle \langle h_{I},f\rangle.
\]
Denote the center of the cube $I$ by $y_{I}$. We have
\begin{align*}
|\langle h_{J},I_{\lambda}h_{I}\rangle|  =& \Big|\int_{J}\int_{I}\frac{h_{J}(x)h_{I}(y)}{|x-y|^{\lambda}}dydx\Big|\\
=&\Big|\int_{J}\int_{I}\big(\frac{1}{|x-y|^{\lambda}}-\frac{1}{|x-y_{I}|^{\lambda}}\big)h_{J}(x)h_{I}(y)dydx\Big|\\
\lesssim & \|h_{J}\|_{1} \|h_{I}\|_{1}\frac{\ell(I)}{\dist(I,J)^{\lambda+1}}\\
\lesssim & |I|^{1/2}|J|^{1/2}\frac{\ell(I)}{\ell(K)^{\lambda+1}(\ell(I)/\ell(K))^{1/2}}\\
=& \frac{|I|^{1/2}|J|^{1/2}}{|K|^{\lambda/n}}\cdot \frac{\ell(I)^{1/2}}{\ell(K)^{1/2}}\\
=& 2^{-i/2}\cdot \frac{|I|^{1/2}|J|^{1/2}}{|K|^{\lambda/n}}.
\end{align*}
So we obtain
\begin{align*}
\sigmaout =& c\cdot \sum^{\infty}_{j=1}\sum^{\infty}_{i=j}2^{-i/2}\sum_{K\in\mathscr{D}^{w}}\langle g,A^{i,j}_{\lambda,K}f\rangle\\
=& c\cdot \sum^{\infty}_{j=1}\sum^{\infty}_{i=j}2^{-i/2}\langle g,S^{i,j}_{\lambda,w}f\rangle.
\end{align*}

{\textbf{The term $\sigmanear$}}.
Again by Lemma \ref{lem:majorant}, with the condition $I\cap J=\emptyset$ and $\dist(I,J)\leq \ell(J)(\ell(I)/\ell(J))^{1/(2\lambda+1)}$, we know that $K=I\vee J$ exists and  $\ell(K)\leq 2^r \ell(I)$. So $\sigmanear$ can be written as
\[
\sigmanear=\sum^{r}_{i=1} \sum^{i}_{j=i} \sum_{K\in \mathscr{D}^{w}} \sum_{\substack{I,J\in\mathscr{D}^{w},I\cap J=\emptyset\\I\vee J =K;I^{(i)}=K,J^{(j)}=K\\ \dist(I,J) \leq \ell(J)\cdot (\ell(I)/\ell(J))^{1/2(\lambda+1)}\\ I\ \text{is\ good}}}\langle g,h_{J} \rangle \langle h_{J},I_{\lambda}h_{I}\rangle \langle h_{I},f\rangle.
\]
Since $I_{\lambda}$ is bounded from $L^p$ to $L^q$, we have
\begin{align*}
|\langle h_{J},I_{\lambda}h_{I}\rangle|\leq & \|h_{J}\|_{q'}\|I_{\lambda}h_{I}\|_{q}\\
\lesssim & |J|^{1/q'}|J|^{-1/2}\|h_{I}\|_{p}\\
= & |J|^{1/q'}|J|^{-1/2}|I|^{1/p}|I|^{-1/2}.
\end{align*}
Observe that  $2^{-r}\ell(K)\leq \ell(I) \leq \ell(J) \leq \ell(K)$
and $1/{p'}+1/{q}= {\lambda}/{n}$. We have
\[
|J|^{1/q'}|J|^{-1/2}|I|^{1/p}|I|^{-1/2} \lesssim \frac{|I|^{1/2}|J|^{1/2}}{|K|^{\lambda/n}}.
\]
So
\begin{align*}
\sigmanear = & c\cdot\sum^{r}_{j=1}\sum^{r}_{i=j}\sum_{K\in\mathscr{D}^w}\langle g,A^{i,j}_{\lambda,K} \rangle
=  c\cdot\sum^{r}_{j=1}\sum^{r}_{i=j}\langle g, S^{i,j}_{\lambda,w} \rangle.
\end{align*}

{\textbf{The term $\sigmashallow$}}.
It is easy to see that the term $\sigmashallow$ can be written as
\[
\sigmashallow =\sum^{r}_{i=0}\sum_{\substack{J\in\mathscr{D}^w\\ I^{(i)}=J\\I\ \text{is\ good}}}\langle g,h_{J} \rangle \langle h_{J},I_{\lambda}h_{I}\rangle \langle h_{I},f\rangle.
\]
With similar arguments as those for calculating the term $\sigmanear$, we have
\begin{align*}
|\langle h_{J},I_{\lambda}h_{I}\rangle|\leq & \|h_{J}\|_{q'}\|I_{\lambda}h_{I}\|_{q}\\
\lesssim & |J|^{1/q'}|J|^{-1/2}\|h_{I}\|_{p}\\
= & |J|^{1/q'}|J|^{-1/2}|I|^{1/p}|I|^{-1/2}\\
\lesssim & \frac{|I|^{1/2}|J|^{1/2}}{|J|^{\lambda/n}}.
\end{align*}
Hence
\begin{align*}
\sigmashallow = & c\cdot\sum^{r}_{i=0}\sum_{J\in \mathscr{D}^w}\langle g ,A^{i,0}_{\lambda,J}f\rangle
=   c\cdot\sum^{r}_{i=0}\langle g, S^{i,0}_{\lambda,w}f\rangle.
\end{align*}

{\textbf{The term $\sigmadeep$}}.
This term   can be written as
\[
\sigmadeep = \sum^{\infty}_{i=r+1}\sum_{\substack{J\in\mathscr{D}^w\\ I^{(i)}=J\\I\ \text{is\ good}}}\langle g,h_{J} \rangle \langle h_{J},I_{\lambda}h_{I}\rangle \langle h_{I},f\rangle.
\]
The cube $J$ has $2^n$ children in $\mathscr{D}^{w}$. $I\subsetneq J$ implies that $I$ must be contained in one of the children $J_1$. Since $\ell(I)\leq2^{-r-1}\ell(J)$, we have $\ell(I)\leq2^{-r}\ell(J_1)$. Hence $I$ is good and
\begin{align*}
\dist(I,J_1) >& \ell(J_1)\big(\frac{\ell(I)}{\ell(J_1)}\big)^{1/2(\lambda+1)}\\
\gtrsim & \ell(J)\big(\frac{\ell(I)}{\ell(J)}\big)^{1/2(\lambda+1)}.
\end{align*}

We denote the center of the cube $I$ by $y_{I}$. Let $J'$ be the cube
with side length $2\ell(J)$ and the same center $y_I$. Then we have  $J'\supset J$. By Lemma~\ref{lem:vanish},
\[
\langle 1_{J'},I_{\lambda}h_{I} \rangle = 0.
\]
Now we see from the definition of Haar functions that  $h_{J}$ is a constant on $J_1$.
More precisely, $h_{J} \equiv \langle h_{J} \rangle_{J_1}$.
It follows that
\begin{align*}
\langle h_{J},I_{\lambda}h_{I} \rangle  = & \langle h_{J},I_{\lambda}h_{I} \rangle - \langle h_{J} \rangle_{J_1}\cdot \langle 1_{J'},I_{\lambda}h_{I} \rangle\\
= & \langle h_{J}-1_{J'}\cdot\langle h_{J} \rangle_{J_1} ,I_{\lambda}h_{I} \rangle\\
= & \langle 1_{J'\backslash J_1}\cdot (h_{J}-1_{J'}\cdot\langle h_{J} \rangle_{J_1}), I_{\lambda}h_{I}   \rangle.
\end{align*}
So we obtain
\begin{align*}
|\langle h_{J},I_{\lambda}h_{I} \rangle| = & \Big|\int_{J'\backslash J_1}\int_{I}\frac{(h_{J}-1_{J'}\cdot\langle h_{J} \rangle_{J_1})(x)h_{I}(y)}{|x-y|^{\lambda}}dydx\Big|\\
= & \Big| \int_{J'\backslash J_1}\int_{I}\big(\frac{1}{|x-y|^{\lambda}}-\frac{1}{|x-y_I|^{\lambda}}\big)(h_{J}-1_{J'}\cdot\langle h_{J} \rangle_{J_1})(x)h_{I}(y)dydx \Big|\\
\lesssim & \int_{J'\backslash J_1}\int_{I} \frac{\ell(I)}{\dist(I,J_1)^{\lambda+1}} |J|^{-1/2}|I|^{-1/2}dydx\\
\lesssim & |I|^{1/2}|J|^{1/2}\frac{\ell(I)}{\ell(J)^{\lambda+1}(\ell(I)/\ell(J))^{1/2}}\\
=& \frac{|I|^{1/2}|J|^{1/2}}{|J|^{\lambda/n}}\cdot\big( \frac{\ell(I)}{\ell(J)}\big)^{1/2}\\
=& 2^{-i/2} \cdot \frac{|I|^{1/2}|J|^{1/2}}{|J|^{\lambda/n}}.
\end{align*}
Hence
\begin{align*}
\sigmadeep =& c\cdot \sum^{\infty}_{i=r+1} 2^{-i/2} \sum_{J\in \mathscr{D}^w}\langle g, A^{i,0}_{\lambda,J}f \rangle
=  c\cdot \sum^{\infty}_{i=r+1} 2^{-i/2} \langle g,S^{i,0}_{\lambda,w}f \rangle.
\end{align*}

Similarly we can calculate the term $\text{II}$ and get the conclusion as desired.
This completes the proof.
\end{proof}

\section{Proof of Theorem~\ref{thm:blo}}
In this section, we give a proof of Theorem~\ref{thm:blo}.
First, we introduce some results on mixed-norm spaces.
We begin with a result on the weighted estimates of the strong maximal function.

\begin{proposition}[{\cite[Theorem 1]{Kurtz07}}]\label{pro:strong maximal}
Let $1<p,q<\infty$ and weights $w_1\in A_p(\mathbb R^n)$ and $w_2  \in A_q(\mathbb R^m)$. Then
\[
\|M_{S}f\|_{L^q(L^p)(w_2\times w_1)} \lesssim_{[w_1]_{A_p(\mathbb R^n)},[w_2]_{A_q(\mathbb R^m)}}\|f\|_{L^q(L^p)(w_2\times w_1)}.
\]
Moreover, we have the Fefferman-Stein inequality
\[
\Big\|\Big(\sum_{j} |M_{S}f_j|^2\Big)^{1/2}\Big\|_{L^q(L^p)(w_2\times w_1)} \!\!\lesssim_{[w_1]_{A_p(\mathbb R^n)},[w_2]_{A_q(\mathbb R^m)}}
\!\!\Big\|\Big(\sum_{j}|f_j|^2\Big)^{1/2}\Big\|_{L^q(L^p)(w_2\times w_1)}.
\]
\end{proposition}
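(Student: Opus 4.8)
The plan is to deduce the proposition from one-parameter weighted maximal estimates by exploiting the pointwise domination of the strong maximal function by an iteration of one-variable maximal operators. Writing $M^{1}$ and $M^{2}$ for the Hardy--Littlewood maximal operators acting on the first and on the second variable, respectively, one checks immediately (average first over the $x_{2}$-cube, then over the $x_{1}$-cube) that
\[
M_{S}f \le M^{1}\big(M^{2}f\big),\qquad M_{S}f_{j}\le M^{1}\big(M^{2}f_{j}\big)\ \ \text{for every }j.
\]
So it suffices to bound $M^{1}\circ M^{2}$, and its $\ell^{2}$-valued analogue, on $L^{q}(L^{p})(w_{2}\times w_{1})$.

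Next I would dispose of the outer factor $M^{1}$, which is essentially free because it acts in the same variable as the inner norm. For each fixed $x_{2}$, the classical weighted maximal inequality on $\mathbb R^{n}$ with $w_{1}\in A_{p}(\mathbb R^{n})$ gives $\|M^{1}g(\cdot,x_{2})\|_{L^{p}(w_{1})}\lesssim_{[w_{1}]_{A_{p}(\mathbb R^{n})}}\|g(\cdot,x_{2})\|_{L^{p}(w_{1})}$, and the weighted Fefferman--Stein inequality on $\mathbb R^{n}$ gives the same statement with $\big(\sum_{j}|g_{j}(\cdot,x_{2})|^{2}\big)^{1/2}$ in place of $g(\cdot,x_{2})$; taking the $L^{q}(w_{2})$-norm in $x_{2}$ and then applying these with $g=M^{2}f$ and $g_{j}=M^{2}f_{j}$ reduces both assertions to the single scalar estimate
\[
\|M^{2}f\|_{L^{q}(L^{p})(w_{2}\times w_{1})}\lesssim_{[w_{1}]_{A_{p}(\mathbb R^{n})},\,[w_{2}]_{A_{q}(\mathbb R^{m})}}\|f\|_{L^{q}(L^{p})(w_{2}\times w_{1})}
\]
and its $\ell^{2}$-valued version. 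No interchange of the $L^{p}$- and $L^{q}$-norms occurs in this step.

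The estimate for $M^{2}$ is the only real point, and it is the step I expect to be the main obstacle. Here the maximal operator acts in $x_{2}$ while the inner norm integrates in $x_{1}$, so one cannot move the $L^{p}(w_{1})$-norm past $M^{2}$ by Minkowski's inequality: the supremum defining $M^{2}$ sits outside that norm and ``pulling the supremum inside'' is simply false. The correct framework is lattice-valued. I would set $\mathbb X=L^{p}(\mathbb R^{n},w_{1})$ for the first inequality and $\mathbb X=L^{p}(\mathbb R^{n},w_{1};\ell^{2})$ for the Fefferman--Stein one; in each case $\mathbb X$ is a Banach function space on which the Hardy--Littlewood maximal operator is bounded (by the ordinary, resp. the Fefferman--Stein, weighted inequality on $\mathbb R^{n}$). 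Regarding $f$ as an $\mathbb X$-valued function of $x_{2}$, the operator $M^{2}$ becomes exactly the lattice Hardy--Littlewood maximal operator of $\mathbb X$-valued functions, and what must be proved is its boundedness on $L^{q}(\mathbb R^{m},w_{2};\mathbb X)$ for $w_{2}\in A_{q}(\mathbb R^{m})$ --- this is \cite[Theorem~1]{Kurtz07} in vector-valued form. One way to obtain it is Banach-function-space-valued Rubio de Francia extrapolation starting from the scalar weighted maximal bounds (as developed in the monographs of Garc\'ia-Cuerva--Rubio de Francia and of Cruz-Uribe--Martell--P\'erez); a self-contained alternative is to reduce $M_{S}$ to its dyadic versions over finitely many shifted grids via the one-third trick in each factor, dominate the dyadic $M^{2}$ by a sparse operator coming from a stopping-time decomposition of $x_{2}\mapsto\|f(\cdot,x_{2})\|_{L^{p}(w_{1})}$, and estimate that sparse operator directly using the $A_{q}(\mathbb R^{m})$ condition. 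In either case, Steps 1 and 2 are purely formal iterations, and all of the bi-parameter content is concentrated in this passage from the scalar weighted maximal inequality to its lattice-valued form.
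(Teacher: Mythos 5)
The paper does not actually prove this proposition: it is cited verbatim from Kurtz \cite{Kurtz07}, so there is no proof here to compare against, and your sketch has to stand on its own. Steps 1 and 2 are correct and cleanly organized: the pointwise bound $M_{S}f\le M^{1}(M^{2}f)$ is immediate, and because $M^{1}$ acts in the same variable as the inner norm, the Muckenhoupt and weighted Fefferman--Stein inequalities on $\mathbb R^{n}$ with $w_{1}\in A_{p}(\mathbb R^{n})$ apply pointwise in $x_{2}$ and survive the outer $L^{q}(w_{2})$-norm unchanged, reducing everything to the boundedness (and $\ell^{2}$-valued boundedness) of $M^{2}$ on $L^{q}(L^{p})(w_{2}\times w_{1})$. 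You also correctly diagnose why Minkowski fails at this step: the maximizing cube in $M^{2}f(x_{1},x_{2})$ may depend on $x_{1}$.

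Your primary route for Step 3 -- regard $M^{2}$ as the lattice Hardy--Littlewood maximal operator on $L^{q}(\mathbb R^{m},w_{2};X)$ with $X=L^{p}(\mathbb R^{n},w_{1})$ (resp.\ $X=L^{p}(\mathbb R^{n},w_{1};\ell^{2})$) and invoke lattice-valued Rubio de Francia extrapolation from the Hardy--Littlewood property of $X$ -- is sound. The one hypothesis you leave tacit should be verified: $X=L^{p}(w_{1})$ has the Hardy--Littlewood property because, taking outer exponent $p$, Fubini gives $L^{p}(\mathbb R^{m};L^{p}(w_{1}))\simeq L^{p}(\mathbb R^{n+m};1\otimes w_{1})$, and there $M^{2}$ is bounded trivially since it acts in the variable the weight ignores; the $\ell^{2}$-refinement follows from the classical unweighted Fefferman--Stein inequality in the same way.

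Your alternative route, however, has a genuine gap. You propose dominating the dyadic $M^{2}$ by a sparse operator built from a stopping-time decomposition of the \emph{scalar} function $F(x_{2}):=\|f(\cdot,x_{2})\|_{L^{p}(w_{1})}$. This runs into exactly the obstruction you used to reject Minkowski: the cube achieving the supremum in $M^{2}f(x_{1},x_{2})$ depends on $x_{1}$, so while $\|\ave{|f(\cdot,\cdot)|}_{Q}^{(2)}\|_{L^{p}(w_{1})}\le\ave{F}_{Q}$ for each \emph{fixed} $Q$, the quantity $\|M^{2}f(\cdot,x_{2})\|_{L^{p}(w_{1})}=\big\|\sup_{Q\ni x_{2}}\ave{|f(\cdot,\cdot)|}_{Q}^{(2)}\big\|_{L^{p}(w_{1})}$ is \emph{not} controlled by $\sup_{Q\ni x_{2}}\ave{F}_{Q}=MF(x_{2})$ -- the norm of a supremum is not the supremum of the norms. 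Any sparse domination has to be carried out at the level of $X$-valued averages, not scalar averages of $F$; a Calder\'on--Zygmund decomposition of $F$ alone cannot see the $x_{1}$-dependence of the selected cubes. So: the first route works and is essentially the standard proof; the second, as written, does not.
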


It is easy to see that the above result remains true whenever  $M_S$
is replaced by $M_{\Dw}^1$, $M_{\Db}^2$ or $M_{\Dw,\Db}$.

Next we introduce the extrapolation theorem~\cite{Duoandikoetxea11}.

\begin{proposition} \label{prop:extropolation}
Assume that for a pair of nonnegative functions  $(f,g)$, for some $p_0\in [1,\infty)$ and for all $w\in A_{p_0}$, we have
\[
\Big(\int_{\mathbb R^n}g^{p_0}w\Big)^{1/p_0}\leq CN\big([w]_{A_{p_0}}\big)\Big(\int_{\mathbb R^n}f^{p_0}w\Big)^{1/p_0},
\]
where $N$ is an increasing function and the constant $C$ does not depend on $w$. Then for all $1<p<\infty$ and all $w\in A_p$, we have
\[
\Big(\int_{\mathbb R^n}g^{p}w\Big)^{1/p}\leq CK(w)\Big(\int_{\mathbb R^n}f^{p}w\Big)^{1/p},
\]
where $K(w)\leq C_1N\big(C_2[w]_{A_p}^{\max(1,\frac{p_0-1}{p-1})}\big)$ for $w\in A_p$.
\end{proposition}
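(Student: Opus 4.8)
The statement is the quantitative Rubio de Francia extrapolation theorem, so the plan is to run the Rubio de Francia iteration algorithm together with duality, treating the two ranges $p>p_0$ and $1<p<p_0$ separately (the case $p=p_0$ being the hypothesis itself). I will use three standard ingredients: Buckley's sharp maximal bound $\|M\|_{L^r(v)\to L^r(v)}\lesssim_{n,r}[v]_{A_r}^{1/(r-1)}$ for $v\in A_r$; the iteration lemma, namely that if $T$ is a sublinear operator bounded on $L^r(v)$ and $\mathcal R\varphi:=\sum_{k\ge 0}\big(2\|T\|_{L^r(v)}\big)^{-k}T^k\varphi$, then $|\varphi|\le\mathcal R\varphi$, $\|\mathcal R\varphi\|_{L^r(v)}\le 2\|\varphi\|_{L^r(v)}$ and $T(\mathcal R\varphi)\le 2\|T\|_{L^r(v)}\mathcal R\varphi$, so that taking $T=M$ produces weights in $A_1$ (hence in $A_{p_0}$) with controlled constant; and the duality relation $[w^{1-p'}]_{A_{p'}}=[w]_{A_p}^{1/(p-1)}$, which is the source of the exponent $\tfrac{p_0-1}{p-1}$.

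For $p>p_0$, set $s:=p/p_0>1$ and dualize in $L^s(w)$: since $p_0 s=p$ and $1/s=p_0/p$,
\[
\|g\|_{L^p(w)}^{p_0}=\|g^{p_0}\|_{L^s(w)}=\sup\Big\{\textstyle\int g^{p_0}h\,w:\ h\ge 0,\ \|h\|_{L^{s'}(w)}\le 1\Big\}.
\]
For a fixed such $h$ the goal is to manufacture, via the iteration lemma, a weight $H\ge h$ with $\|H\|_{L^{s'}(w)}\lesssim 1$ and $Hw\in A_{p_0}$, $[Hw]_{A_{p_0}}\lesssim[w]_{A_p}$. Granting this, applying the hypothesis with the weight $Hw$ and then Hölder in $L^s(w)$ (using $\|f^{p_0}\|_{L^s(w)}=\|f\|_{L^p(w)}^{p_0}$) gives
\[
\int g^{p_0}h\,w\le\int g^{p_0}(Hw)\lesssim N\big([Hw]_{A_{p_0}}\big)^{p_0}\int f^{p_0}(Hw)\lesssim N\big(c[w]_{A_p}\big)^{p_0}\|f\|_{L^p(w)}^{p_0},
\]
and taking the supremum over $h$ settles this range with exponent $1=\max(1,\tfrac{p_0-1}{p-1})$.

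For $1<p<p_0$, set $s:=p_0/p>1$ and split, for a weight $v$ to be chosen,
\[
\int g^{p}w=\int\big(g^{p}v^{p/p_0}\big)\big(wv^{-p/p_0}\big)\le\Big(\int g^{p_0}v\Big)^{1/s}\Big(\int w^{s'}v^{-ps'/p_0}\Big)^{1/s'};
\]
applying the hypothesis to the first factor reduces matters to choosing $v\in A_{p_0}$, built by the iteration lemma, so that the two remaining factors are controlled by $\|f\|_{L^p(w)}$. Tracking constants through Buckley's bound and the duality relation above, the relevant $A_{p_0}$-constant is $[w]_{A_p}^{1/(p-1)}$ raised to the exponent $p_0-1$, i.e. $[v]_{A_{p_0}}\approx[w]_{A_p}^{(p_0-1)/(p-1)}$, which yields the factor $N\big(c[w]_{A_p}^{(p_0-1)/(p-1)}\big)$, matching $\max(1,\tfrac{p_0-1}{p-1})=\tfrac{p_0-1}{p-1}$ in this range.

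The qualitative statement — mere finiteness of $K(w)$ — falls out of the iteration lemma with $T=M$ essentially as written, so the real work, and the main obstacle, is the quantitative bookkeeping in the construction of the auxiliary $A_{p_0}$ weight: a naïve factorization of it as a product of $A_1$ weights overcounts and gives a worse power of $[w]_{A_p}$, so one must run the Rubio de Francia iterations at carefully optimized exponents and against carefully chosen base weights (for instance applying $M$ in $L^p(w)$, where Buckley contributes exactly $[w]_{A_p}^{1/(p-1)}$, and in an $L^{p'}$-space against $w^{1-p'}\in A_{p'}$) so that the final $A_{p_0}$-constant is precisely $c[w]_{A_p}^{\max(1,(p_0-1)/(p-1))}$. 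A secondary subtlety, already visible when $p>p_0$, is that a single application of the algorithm directly in $L^{(p/p_0)'}(w)$ is not available, since there is no reason for $w$ to lie in $A_{(p/p_0)'}$ when $p$ is large compared with $p_0$; this is exactly why one routes the construction through the pair $w\in A_p$ and $w^{1-p'}\in A_{p'}$.
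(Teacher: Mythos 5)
The paper does not prove this proposition; it simply cites it as \cite[Theorem 3.1]{Duoandikoetxea11}, so there is no internal proof against which to compare. What you have written is a correct high-level outline of the standard proof (Duoandikoetxea's, or the presentation in Cruz-Uribe--Martell--P\'erez): the Rubio de Francia iteration, Buckley's sharp bound $\|M\|_{L^r(v)\to L^r(v)}\lesssim [v]_{A_r}^{1/(r-1)}$, the duality relation $[w^{1-p'}]_{A_{p'}}=[w]_{A_p}^{1/(p-1)}$, the case split with duality in $L^{p/p_0}(w)$ for $p>p_0$ and a H\"older split for $p<p_0$, and the observation that a single na\"ive application of the algorithm in $L^{(p/p_0)'}(w)$ is unavailable because $w$ need not lie in $A_{(p/p_0)'}$. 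Your stated targets, $[Hw]_{A_{p_0}}\lesssim[w]_{A_p}$ for $p>p_0$ and $[v]_{A_{p_0}}\approx[w]_{A_p}^{(p_0-1)/(p-1)}$ for $p<p_0$, are exactly what the known proof achieves.

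That said, the proposal does not constitute a proof: the one step that carries all the content --- manufacturing the auxiliary weight with the sharp $A_{p_0}$ constant --- is explicitly deferred (``Granting this\dots'', ``the real work\dots is the quantitative bookkeeping''). To close the gap in the $p>p_0$ case one must actually produce $H\geq h$ with $\|H\|_{L^{(p/p_0)'}(w)}\lesssim 1$ and $Hw\in A_{p_0}$ with constant $\lesssim[w]_{A_p}$; this requires running two Rubio de Francia iterations, one for $M$ on $L^{p}(w)$ and one for the dual operator $u\mapsto M(uw)/w$ on $L^{p'}(w)$ (equivalently $M$ on $L^{p'}(w^{1-p'})$), and then assembling the auxiliary weight by a Jones-type factorization so that the two Buckley exponents combine to give precisely $[w]_{A_p}$ rather than a higher power. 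You correctly gesture at all of these ingredients but carry out none of them. A similar gap remains in the $p<p_0$ branch: after the H\"older split you still need a second H\"older step to return from $\int f^{p_0}v$ to $\|f\|_{L^p(w)}$, and the choice of $v$ must simultaneously control that step and the factor $\bigl(\int w^{s'}v^{-ps'/p_0}\bigr)^{1/s'}$. As submitted, this is a correct and well-informed plan, not a complete argument.
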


With the above extrapolation theorem, we can prove the following estimates of
martingale difference square functions.

\begin{lemma}\label{Lm:square}
 Under the same hypotheses as in Proposition \ref{pro:strong maximal}, we have
\begin{align}
\|S_{\Dw}^1f\|_{L^q(L^p)(w_2\times w_1)} &\lesssim_{[w_1]_{A_p(\mathbb R^n)}}\|f\|_{L^q(L^p)(w_2\times w_1)},
   \label{eq:s:e1}
                \\
 \|S_{\Db}^2f\|_{L^q(L^p)(w_2\times w_1)} &\lesssim_{[w_2]_{A_q(\mathbb R^m)}}\|f\|_{L^q(L^p)(w_2\times w_1)},
   \label{eq:s:e2}
                \\
 \|S_{\Dw,\Db}f\|_{L^q(L^p)(w_2\times w_1)} &\lesssim_{[w_1]_{A_p(\mathbb R^n)},[w_2]_{A_q(\mathbb R^m)}}\|f\|_{L^q(L^p)(w_2\times w_1)}.
   \label{eq:s:e3}
\end{align}
\end{lemma}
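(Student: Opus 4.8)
The plan is to deduce all three estimates \eqref{eq:s:e1}--\eqref{eq:s:e3} from the \emph{scalar} weighted square function bound (Proposition~\ref{prop:stand square}), Fubini's theorem, and the extrapolation theorem (Proposition~\ref{prop:extropolation}), in that order. The estimate \eqref{eq:s:e1} is immediate: for each fixed $x_2$ the map $x_1\mapsto S_{\Dw}^1 f(x_1,x_2)$ is precisely the scalar dyadic square function of $x_1\mapsto f(x_1,x_2)$, so Proposition~\ref{prop:stand square} gives $\|S_{\Dw}^1 f(\cdot,x_2)\|_{L^{p}(w_1)}\lesssim_{[w_1]_{A_p(\mathbb R^n)}}\|f(\cdot,x_2)\|_{L^{p}(w_1)}$ with a constant independent of $x_2$; taking the $L^{q}(w_2)$ norm in $x_2$ of both sides yields \eqref{eq:s:e1}.

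The estimate \eqref{eq:s:e2} is the one that needs an idea, since now the square function acts on the \emph{outer} variable and a naive use of Fubini would produce the wrong mixed norm $L^p(L^q)$ rather than $L^q(L^p)$. My plan here is to extrapolate. Fix $1<p<\infty$, $w_1\in A_p(\mathbb R^n)$ and $f$, and put $F(x_2):=\|f(\cdot,x_2)\|_{L^p(w_1)}$ and $G(x_2):=\|S_{\Db}^2 f(\cdot,x_2)\|_{L^p(w_1)}$, two nonnegative functions on $\mathbb R^m$. The crucial point is that the \emph{base} estimate should be taken at exponent $p_0=p$, namely equal to the \emph{inner} exponent: for any $v\in A_p(\mathbb R^m)$, Fubini together with Proposition~\ref{prop:stand square} applied on $\mathbb R^m$ with weight $v$ gives
\[
\|G\|_{L^p(v)}^{p}=\int_{\mathbb R^n}w_1(x_1)\,\|S_{\Db}(f(x_1,\cdot))\|_{L^p(v)}^{p}\,dx_1\lesssim_{[v]_{A_p(\mathbb R^m)}}\int_{\mathbb R^n}w_1(x_1)\,\|f(x_1,\cdot)\|_{L^p(v)}^{p}\,dx_1=\|F\|_{L^p(v)}^{p},
\]
with a constant that does not depend on $w_1$ or $f$. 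Applying Proposition~\ref{prop:extropolation} on $\mathbb R^m$ to the pair $(F,G)$ with base exponent $p_0=p$ then yields $\|G\|_{L^q(w_2)}\lesssim_{[w_2]_{A_q(\mathbb R^m)}}\|F\|_{L^q(w_2)}$ for every $w_2\in A_q(\mathbb R^m)$, which after unwinding the definitions of $F$ and $G$ is exactly \eqref{eq:s:e2}; since the base constant is independent of $w_1$, so is the resulting one.

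For \eqref{eq:s:e3} I would iterate the above. Since the martingale differences in the two variables commute, $S_{\Dw,\Db}f=\big(\sum_{I\in\Dw}|\Delta_I^1\vec g|_{\ell^2}^2\big)^{1/2}$, where $\vec g:=(\Delta_J^2 f)_{J\in\Db}$ is an $\ell^2(\Db)$-valued function; that is, $S_{\Dw,\Db}f$ is the $\ell^2$-valued first-variable square function of $\vec g$. The Hilbert-space-valued analogue of \eqref{eq:s:e1} holds by the same scheme: for fixed $x_2$ one extrapolates (Proposition~\ref{prop:extropolation}) from the trivial base case $p_0=2$, where the $\ell^2$-estimate is just the sum over $J$ of the scalar $L^2(w_1)$ bounds of Proposition~\ref{prop:stand square}, to a weighted vector-valued square function inequality on $\mathbb R^n$, and then one takes the $L^q(w_2)$ norm in $x_2$. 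This bounds $\|S_{\Dw,\Db}f\|_{L^q(L^p)(w_2\times w_1)}$ by $C([w_1]_{A_p(\mathbb R^n)})\big\|\big(\sum_{J\in\Db}|\Delta_J^2 f|^2\big)^{1/2}\big\|_{L^q(L^p)(w_2\times w_1)}=C([w_1]_{A_p(\mathbb R^n)})\|S_{\Db}^2 f\|_{L^q(L^p)(w_2\times w_1)}$, and \eqref{eq:s:e3} follows by invoking \eqref{eq:s:e2}, giving the dependence on $[w_1]_{A_p(\mathbb R^n)}$ and $[w_2]_{A_q(\mathbb R^m)}$ stated in the lemma.

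The main obstacle, and really the whole content of the argument, is \eqref{eq:s:e2}: a square function in the outer variable genuinely cannot be handled by Fubini alone, because interchanging the two integrations turns $L^q(L^p)$ into $L^p(L^q)$. The device that resolves this is to run the extrapolation theorem with base exponent exactly the inner exponent $p$ --- this is the one exponent for which the base estimate collapses, via Fubini, to the scalar weighted square function bound; at any other base exponent the base estimate would itself contain an outer-variable square function with mismatched exponents, i.e.\ the very difficulty one started with. The only bookkeeping to watch is that the base estimate is uniform in $w_1$, so that the extrapolated constants in \eqref{eq:s:e2} (and hence in \eqref{eq:s:e3}) come out depending only on $[w_2]_{A_q(\mathbb R^m)}$, respectively on $[w_1]_{A_p(\mathbb R^n)}$ and $[w_2]_{A_q(\mathbb R^m)}$, as claimed in Lemma~\ref{Lm:square}.
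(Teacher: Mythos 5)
Your proof of \eqref{eq:s:e1} and your extrapolation argument for \eqref{eq:s:e2} coincide exactly with the paper's: the key idea in both is to run extrapolation with the base exponent $p_0$ equal to the \emph{inner} exponent $p$, which is the single value for which Fubini collapses the mixed-norm estimate into a scalar weighted square function bound. One technical nuance you should be careful about: Proposition~\ref{prop:extropolation} requires the base-case constant to be an \emph{increasing} function $N([w]_{A_{p_0}})$, and Proposition~\ref{prop:stand square} only records the dependence as $\sim_{[w]_{A_p}}$ without asserting monotonicity. This is precisely why the paper switches to Proposition~\ref{prop:stand mds func} for the base estimate in \eqref{eq:s:e2} --- the explicit power $[w]_{A_p}^{\max(1/2,1/(p-1))}$ is manifestly increasing, and the paper even remarks on this. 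Your argument would need the same substitution (or an equivalent quantitative form of the square-function bound) to be rigorous, but that is a citation fix, not a conceptual gap.

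For \eqref{eq:s:e3} you take a genuinely different route. You view $S_{\mathscr{D}^\omega,\mathscr{D}^\beta}f$ as the first-variable square function of the $\ell^2(\mathscr{D}^\beta)$-valued function $\vec g=(\Delta_J^2 f)_J$, prove an $\ell^2$-valued analogue of \eqref{eq:s:e1} by extrapolating from the trivial base case $p_0=2$ (where the vector estimate is just the sum over $J$ of scalar $L^2(w_1)$ bounds), and then invoke \eqref{eq:s:e2}. The paper instead randomizes the first-variable martingale differences via a Rademacher sequence, uses the Kahane--Khintchine inequality and Minkowski's integral inequality to pull the expectation outside the mixed norm, and then applies \eqref{eq:s:e2} followed by Proposition~\ref{prop:stand square} to undo the randomization. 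Both routes are correct and both ultimately reduce the biparameter square function to the two one-parameter bounds; yours is arguably more self-contained since it reuses the extrapolation machinery already deployed for \eqref{eq:s:e2}, while the paper's brings in the Banach-space-valued Kahane--Khintchine inequality as an extra tool. The same caveat about monotonicity applies to your $p_0=2$ base case.
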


\begin{proof} First  we   prove (\ref{eq:s:e1}). For any $f\in L^q(L^p)(w_2\times w_1)$,
we have
\begin{align*}
& \hskip -1em\|S_{\Dw}^1f\|_{L^q(L^p)(w_2\times w_1)} \\
&= \bigg(\int_{\mathbb R^m}\bigg(\int_{\mathbb R^n}|S_{\Dw}^1f(x_1,x_2)|^{p}w_1(x_1) dx_1 \bigg)^{q/p}w_2(x_2) dx_2\bigg)^{1/q}\\
&= \bigg(\int_{\mathbb R^m}\bigg(\int_{\mathbb R^n}\Big(\sum_{I\in\Dw}|\Delta_I f(\cdot,x_2)(x_1)|^2\Big)^{p/2}w_1(x_1)dx_1\bigg)^{q/p}w_2(x_2)dx_2\bigg)^{1/q}\\
&= \bigg(\int_{\mathbb R^m}\bigg(\int_{\mathbb R^n}S_{\Dw}\big(f(\cdot,x_2)\big)(x_1)^{p}w_1(x_1)dx_1\bigg)^{q/p}w_2(x_2)dx_2\bigg)^{1/q}.
\end{align*}
By Proposition~\ref{prop:stand square}, we have
\[
\bigg(\int_{\mathbb R^n}S_{\Dw}\big(f(\cdot,x_2)\big)(x_1)^{p}w_1(x_1)dx_2\bigg)^{1/p} \!\lesssim_{[w_1]_{A_p(\mathbb R^n)}}\! \bigg(\int_{\mathbb R^n}\! |f(x_1,x_2)|^{p}w_1(x_1) dx_1 \bigg)^{1/p}.
\]
Hence
\[
\|S_{\Dw}^1f\|_{L^q(L^p)(w_2\times w_1)} \lesssim_{[w_1]_{A_p(\mathbb R^n)}}\|f\|_{L^q(L^p)(w_2\times w_1)}.
\]

Next we   prove (\ref{eq:s:e2}).
Denote $\|f(\cdot,x_2)\|_{L^p(w_1)}$ and $\|S_{\Db}^2 f(\cdot,x_2)\|_{L^p(w_1)}$
by $F(x_2)$ and $G(x_2)$, respectively.
We have
\begin{align*}
 \bigg(\int_{\mathbb R^m}G^{q}w_2\bigg)^{1/q}&=\|S_{\Db}^2f\|_{L^q(L^p)(w_2\times w_1)},
   \\
  \bigg(\int_{\mathbb R^m}F^{q}w_2\bigg)^{1/q}&=\|f\|_{L^q(L^p)(w_2\times w_1)}.
\end{align*}
Notice that for any weight  $w\in A_p(\mathbb R^m)$, we have
\begin{align*}
\bigg(\int_{\mathbb R^m}G^{p} w \bigg)^{1/p}=&\bigg(\iint_{\mathbb R^{n+m}}\Big| S_{\Db}^2f(x_1,x_2)\Big|^p w_1(x_1) w(x_2) dx_1dx_2\bigg)^{1/p}\\
=& \bigg(\int_{\mathbb R^n}\int_{\mathbb R^m} \Big(\sum_{J\in\Db}|\Delta_J f(x_1,\cdot)(x_2)|^2\Big)^{p/2} w(x_2) w_1(x_1) dx_2dx_1\bigg)^{1/p}\\
=& \bigg(\int_{\mathbb R^n}\int_{\mathbb R^m} S_{\Db}\big(f(x_1,\cdot)\big)(x_2)^{p}w(x_2) w_1(x_1) dx_2dx_1\bigg)^{1/p}.
\end{align*}
By Proposition~\ref{prop:stand mds func},
\begin{align*}
& \bigg(\int_{\mathbb R^m} S_{\Db}\big(f(x_1,\cdot)\big)(x_2)^{p}w(x_2) dx_2\bigg)^{1/p}\\
&\le C_{m,p}[w]_{A_p(\mathbb R^m)}^{\max(\frac{1}{2},\frac{1}{p-1})}
\bigg(\int_{\mathbb R^m} |f(x_1,x_2)|^p w(x_2) dx_2 \bigg)^{1/p}.
\end{align*}
This give us
\begin{align*}
\bigg(\int_{\mathbb R^m}G^{p} w \bigg)^{1/p}
\leq & C_{m,p} [w]_{A_p(\mathbb R^m)}^{\max(\frac{1}{2},\frac{1}{p-1})} \bigg(\int_{\mathbb R^n}\int_{\mathbb R^m} |f(x_1,x_2)|^p w(x_2) w_1(x_1) dx_2 dx_1\bigg)^{1/p}\\
= & C_{m,p} [w]_{A_p(\mathbb R^m)}^{\max(\frac{1}{2},\frac{1}{p-1})} \bigg(\int_{\mathbb R^m} F^p w \bigg)^{1/p}.
\end{align*}
Note that $x^{\max(\frac{1}{2},\frac{1}{p-1})}$  is  increasing on $(0, \infty)$. By Proposition~\ref{prop:extropolation}, for any  $w\in A_q(\mathbb R^m)$, we have
\[
\bigg(\int_{\mathbb R^m} G^q w\bigg)^{1/q} \lesssim_{[w]_{A_q(\mathbb R^m)}} \bigg(\int_{\mathbb R^m} F^q w\bigg)^{1/q}.
\]
That is,
\[
\|S_{\Db}^2f\|_{L^q(L^p)(w_2\times w_1)} \lesssim_{[w_2]_{A_q(\mathbb R^m)}}\|f\|_{L^q(L^p)(w_2\times w_1)}.
\]

Finally we prove (\ref{eq:s:e3}).
By the Kahane-Khintchine inequality~\cite{HytonenNeervenVeraarWeis},
\begin{align*}
& \Big(\sum_{I\in\Dw,J\in\Db}|\Delta_{I\times J}f(x)|^2\Big)^{1/2}\\
=&\Big(\int_{\Omega}\sum_{J\in \Db}\Big|\sum_{I\in \Dw}\varepsilon_{I}(s)\Delta_{I\times J}f(x)\Big|^2 ds\Big)^{1/2}\\
\lesssim & \int_{\Omega} \Big(\sum_{J\in \Db}\Big|\Delta_{J}^2\Big(\sum_{I\in\Dw}\varepsilon_{I}(s)\Delta_{I}^1f(x)\Big)\Big|^2\Big)^{1/2}ds,
\end{align*}
where $(\varepsilon_I)_{I\in \Dw}$ is a Rademacher sequence defined on some  probability space $\big(\Omega,\mathbb P\big)$.

Since $p,q>1$, by  Minkowski's integral inequality,
\begin{align*}
& \bigg\|\int_{\Omega} \Big(\sum_{J\in \Db}\Big|\Delta_{J}^2\Big(\sum_{I\in\Dw}\varepsilon_{I}(s)\Delta_{I}^1f\Big)\Big|^2\Big)^{1/2}ds\bigg\|_{L^q(L^p)(w_2\times w_1)}\\
\leq & \int_{\Omega} \Big\| \Big(\sum_{J\in \Db}\Big|\Delta_{J}^2\Big(\sum_{I\in\Dw}\varepsilon_{I}(s)\Delta_{I}^1f\Big)\Big|^2\Big)^{1/2}\Big\|_{L^q(L^p)(w_2\times w_1)} ds\\
= & \int_{\Omega} \Big\| S_{\Db}^2 \Big(\sum_{I\in\Dw}\varepsilon_{I}(s)\Delta_{I}^1f\Big) \Big\|_{L^q(L^p)(w_2\times w_1)} ds.
\end{align*}
It follows from (\ref{eq:s:e2}) that
\[
\Big\| S_{\Db}^2 \Big(\sum_{I\in\Dw}\varepsilon_{I}(s)\Delta_{I}^1f\Big) \Big\|_{L^q(L^p)(w_2\times w_1)}\lesssim_{[w_2]_{A_q(\mathbb R^m)}}\Big\|\sum_{I\in\Dw}\varepsilon_{I}(s)\Delta_{I}^1f\Big\|_{L^q(L^p)(w_2\times w_1)}.
\]
On the other hand, we see from Proposition~\ref{prop:stand square} that
\begin{align*}
\Big\|\sum_{I\in\Dw}\varepsilon_{I}(s)\Delta_{I}^1f(\cdot,x_2)\Big\|_{L^p(w_1)} & \sim_{[w_1]_{A_p(\mathbb R^n)}} \Big\| S_{\Dw}\big( \sum_{I\in\Dw}\varepsilon_{I}(s)\Delta_{I}^1f(\cdot,x_2)\big)\Big\|_{L^p(w_1)}\\
& =  \Big\| S_{\Dw}\big(f(\cdot,x_2)\big)\Big\|_{L^p(w_1)}\\
& \sim_{[w_1]_{A_p(\mathbb R^n)}} \Big\| f(\cdot,x_2)\Big\|_{L^p(w_1)}.
\end{align*}
Combining the above inequalities, we get
\begin{align*}
  \|S_{\Dw,\Db}f\|_{L^q(L^p)(w_2\times w_1)}
& \lesssim  \int_{\Omega} \Big\| S_{\Db}^2 \Big(\sum_{I\in\Dw}\varepsilon_{I}(s)\Delta_{I}^1f\Big) \Big\|_{L^q(L^p)(w_2\times w_1)} ds\\
& \lesssim_{[w_2]_{A_q(\mathbb R^m)}}   \int_{\Omega} \Big\|\sum_{I\in\Dw}\varepsilon_{I}(s)\Delta_{I}^1f\Big\|_{L^q(L^p)(w_2\times w_1)} ds\\
& \lesssim_{[w_1]_{A_p(\mathbb R^n)},[w_2]_{A_q(\mathbb R^m)}} \|f\|_{L^q(L^p)(w_2\times w_1)}.
\end{align*}
This completes the proof.
\end{proof}

The   $\ell^2$-valued extension of linear operators on classical $L^p$ spaces is well known, e.g., see Grafakos~\cite[Chapter 4]{Grafakos}.
Here we give a similar result on mixed-norm spaces.
Since we  do not find a reference,   we present a short proof.

\begin{lemma}\label{Lm:vector value inequality}
Let $\mu_1,\sigma_1$ be measures on $\mathbb R^n$ and $\mu_2,\sigma_2$ be measures on $\mathbb R^m$. Given $0<p_1,p_2,q_1,q_2<\infty$  and suppose that $T$ is a bounded linear operator from $L^{p_2}(L^{p_1})(\mu_2\times \mu_1)$ to $L^{q_2}(L^{q_1})(\sigma_2\times \sigma_1)$. Then $T$ has an $\ell^2$-valued extension, that is, for all complex-valued functions $f_j$ in $L^{p_2}(L^{p_1})(\mu_2\times \mu_1)$ we have
\[
\Big\|\Big(\sum_{j}|T(f_j)|^2\Big)^{1/2}\Big\|_{L^{q_2}(L^{q_1})(\sigma_2\times \sigma_1)}\lesssim \Big\|\Big(\sum_{j}|f_j|^2\Big)^{1/2}\Big\|_{L^{p_2}(L^{p_1})(\mu_2\times \mu_1)}.
\]
\end{lemma}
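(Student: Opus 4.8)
The plan is to run a randomization (Rademacher) argument, just as in the classical $\ell^2$-valued extension on $L^p$, but with the scalar Khintchine inequality replaced, at two nested levels, by the Kahane--Khintchine inequality, in order to absorb the fact that the inner and outer exponents of the mixed norm are in general different from $2$ and from each other. Fix a Rademacher sequence $(\varepsilon_j)_j$ on a probability space, with expectation $\mathbb{E}$. The engine of the argument is the following equivalence: for every $0<r<\infty$, all $0<s_1,s_2<\infty$, all measures $w_1$ on $\mathbb R^n$ and $w_2$ on $\mathbb R^m$, and all measurable $g_j:\mathbb R^{n+m}\to\mathbb C$,
\[
\Big\|\Big(\sum_j|g_j|^2\Big)^{1/2}\Big\|_{L^{s_2}(L^{s_1})(w_2\times w_1)}\ \sim_{r,s_1,s_2}\ \Big(\mathbb{E}\,\Big\|\sum_j\varepsilon_j g_j\Big\|_{L^{s_2}(L^{s_1})(w_2\times w_1)}^{\,r}\Big)^{1/r}.
\]

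To establish this equivalence I would first apply the scalar Khintchine inequality (a special case of Kahane--Khintchine \cite{HytonenNeervenVeraarWeis}) pointwise in $x_1$ with exponent $s_1$, namely $\big(\sum_j|g_j(x)|^2\big)^{1/2}\sim_{s_1}\big(\mathbb{E}\,|\sum_j\varepsilon_j g_j(x)|^{s_1}\big)^{1/s_1}$; raising to the power $s_1$, integrating against $w_1\,dx_1$, and using Tonelli to pull $\mathbb{E}$ outside rewrites $\big\|(\sum_j|g_j|^2)^{1/2}(\cdot,x_2)\big\|_{L^{s_1}(w_1)}^{s_1}$ as $\mathbb{E}\,\big\|\sum_j\varepsilon_j g_j(\cdot,x_2)\big\|_{L^{s_1}(w_1)}^{s_1}$. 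Next, raising to the power $s_2/s_1$ and integrating against $w_2\,dx_2$, the Kahane--Khintchine inequality \cite{HytonenNeervenVeraarWeis}, applied for each fixed $x_2$ in the Banach space $L^{s_1}(w_1)$ to the vector-valued Rademacher sum $\sum_j\varepsilon_j g_j(\cdot,x_2)$, allows me to replace $\big(\mathbb{E}\,\|\cdot\|_{L^{s_1}(w_1)}^{s_1}\big)^{s_2/s_1}$ by $\mathbb{E}\,\|\cdot\|_{L^{s_1}(w_1)}^{s_2}$; one more application of Tonelli identifies the outcome with $\mathbb{E}\,\big\|\sum_j\varepsilon_j g_j\big\|_{L^{s_2}(L^{s_1})(w_2\times w_1)}^{s_2}$. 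A final application of Kahane--Khintchine, now in the Banach space $L^{s_2}(L^{s_1})(w_2\times w_1)$, replaces the exponent $s_2$ by the arbitrary $r$, which gives the displayed equivalence.

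Granting this, the lemma follows immediately. Apply the equivalence with $g_j=Tf_j$, the target space $L^{q_2}(L^{q_1})(\sigma_2\times\sigma_1)$, and, say, $r=2$; by linearity of $T$ one has $\sum_j\varepsilon_j Tf_j=T\big(\sum_j\varepsilon_j f_j\big)$, so the boundedness hypothesis gives, for each realization of the signs, $\big\|\sum_j\varepsilon_j Tf_j\big\|_{L^{q_2}(L^{q_1})(\sigma_2\times\sigma_1)}\lesssim\big\|\sum_j\varepsilon_j f_j\big\|_{L^{p_2}(L^{p_1})(\mu_2\times\mu_1)}$; squaring, taking $\mathbb{E}$, and invoking the equivalence a second time on the domain side (with $g_j=f_j$) yields the bound $\lesssim\big\|(\sum_j|f_j|^2)^{1/2}\big\|_{L^{p_2}(L^{p_1})(\mu_2\times\mu_1)}$, which is the assertion.

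I expect the only genuine subtlety to be the two nested applications of Kahane--Khintchine: because the mixed norm nests an $L^{s_1}$-norm in $x_1$ inside an $L^{s_2}$-norm in $x_2$, a single use of the scalar inequality cannot straighten out both exponents, and one must pass through the $L^{s_1}(w_1)$-valued (fiberwise) Rademacher sum before reassembling the full mixed norm. Everything else is routine: Tonelli is insensitive to the size of the exponents, Khintchine and Kahane--Khintchine hold for all $0<r<\infty$ so no lower bound on the $p_i,q_i$ is needed, and the reduction to complex-valued $f_j$ is harmless (apply the scalar inequality to real and imaginary parts).
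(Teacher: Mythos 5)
Your argument is correct, and it runs on the same engine as the paper's: pointwise Khintchine to convert the $\ell^2$-sum to a Rademacher average, Tonelli to pull the expectation through the inner integral, Kahane--Khintchine in the fiber Banach space $L^{s_1}(w_1)$ to match the inner and outer mixed-norm exponents, Tonelli again, then the boundedness of $T$ applied pathwise to $\sum_j\varepsilon_j f_j$. The one place you genuinely diverge is the last renormalization: the paper handles the residual exponent mismatch between the outer exponents $q_2$ and $p_2$ by splitting into cases, using Minkowski's integral inequality when $q_2>p_2$ and H\"older (with $\Omega$ a probability space) when $q_2\le p_2$; you instead apply Kahane--Khintchine once more, in the full space $L^{s_2}(L^{s_1})(w_2\times w_1)$, to upgrade the fixed outer exponent $s_2$ to an arbitrary $r$, after which the two sides of the estimate can simply be matched with $r=2$. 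This packages the whole computation into a single clean equivalence and eliminates the case analysis; the tradeoff is that you invoke Kahane--Khintchine at a third nested level, including in the possibly quasi-Banach mixed-norm space when some exponent is below $1$ (a technicality the paper's proof also implicitly faces in $L^{q_1}(\sigma_1)$ and $L^{p_1}(\mu_1)$, and which is harmless since Kahane--Khintchine is known for quasi-Banach spaces as well). Both proofs are otherwise the same.
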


\begin{proof}
By the Khintchine inequality,
\[
\Big(\sum_{j}|T(f_j)(x)|^2\Big)^{1/2} \lesssim \bigg(\int_{\Omega}\big| \sum_{j}\varepsilon_j(s) T(f_j)(x) \big|^{p_2}ds\bigg)^{1/p_2},
\]
where $(\varepsilon_j)$ is a Rademacher sequence defined on a probability space $\big(\Omega,\mathbb P\big)$. This gives us
\begin{align*}
&\Big\| \Big(\sum_{j}|T(f_j)|^2\Big)^{1/2}\Big\|_{L^{q_2}(L^{q_1})(\sigma_2\times \sigma_1)}\\
\lesssim &  \bigg(\int_{\mathbb R^m}\bigg(\int_{\mathbb R^n}\int_{\Omega} \Big| \sum_{j} \varepsilon_{j}(s) Tf_j(x_1,x_2)\Big|^{q_1} ds d\sigma_1(x_1) \bigg)^{q_2/q_1}d\sigma_2(x_2)\bigg)^{1/q_2}\\
= & \bigg(\int_{\mathbb R^m}\bigg(\int_{\Omega}\int_{\mathbb R^n} \Big| \sum_{j} \varepsilon_{j}(s) Tf_j(x_1,x_2)\Big|^{q_1} d\sigma_1(x_1) ds\bigg)^{q_2/q_1}d\sigma_2(x_2)\bigg)^{1/q_2}.\\
\end{align*}
By the Kahane-Khintchine inequality, we have
\begin{align*}
&\int_{\Omega}\int_{\mathbb R^n} \Big| \sum_{j} \varepsilon_{j}(s) Tf_j(x_1,x_2)\Big|^{q_1} d\sigma_1(x_1) ds\\
\lesssim & \bigg(\int_{\Omega}\bigg(\int_{\mathbb R^n} \Big| \sum_{j} \varepsilon_{j}(s) Tf_j(x_1,x_2)\Big|^{q_1} d\sigma_1(x_1) \bigg)^{q_2/q_1}ds\bigg)^{q_1/q_2}.
\end{align*}
Hence
\begin{align*}
& \bigg(\int_{\mathbb R^m}\bigg(\int_{\Omega}\int_{\mathbb R^n} \Big| \sum_{j} \varepsilon_{j}(s) Tf_j(x_1,x_2)\Big|^{q_1} d\sigma_1(x_1) ds\bigg)^{q_2/q_1}d\sigma_2(x_2)\bigg)^{1/q_2}\\
\lesssim & \bigg(\int_{\mathbb R^m} \int_{\Omega}\bigg(\int_{\mathbb R^n} \Big| \sum_{j} \varepsilon_{j}(s) Tf_j(x_1,x_2)\Big|^{q_1} d\sigma_1(x_1) \bigg)^{q_2/q_1}ds d\sigma_2(x_2)\bigg)^{1/q_2}\\
= & \bigg(\int_{\Omega} \int_{\mathbb R^m}\bigg(\int_{\mathbb R^n} \Big| \sum_{j} \varepsilon_{j}(s) Tf_j(x_1,x_2)\Big|^{q_1} d\sigma_1(x_1) \Big)^{q_2/q_1}d\sigma_2(x_2) ds\bigg)^{1/q_2}.
\end{align*}
Since $T$ is bounded from $L^{p_2}(L^{p_1})(\mu_2\times \mu_1)$ to $L^{q_2}(L^{q_1})(\sigma_2\times \sigma_1)$,
we see from the above arguments that
\begin{align*}
&\Big\|\Big(\sum_{j}|T(f_j)|^2\Big)^{1/2}\Big\|_{L^{q_2}(L^{q_1})(\sigma_2\times \sigma_1)}\\
\lesssim &\bigg(\int_{\Omega} \bigg( \int_{\mathbb R^m}\bigg(\int_{\mathbb R^n} \Big| \sum_{j} \varepsilon_{j}(s) f_j(x_1,x_2)\Big|^{p_1} d\mu_1(x_1) \bigg)^{p_2/p_1}d\mu_2(x_2)\bigg)^{q_2/p_2} ds\bigg)^{1/q_2}.
\end{align*}

When $q_2>p_2$, we see from  Minkowski's integral inequality that
\begin{align*}
&\bigg(\int_{\Omega} \bigg( \int_{\mathbb R^m}\bigg(\int_{\mathbb R^n} \Big| \sum_{j} \varepsilon_{j}(s) f_j(x_1,x_2)\Big|^{p_1} d\mu_1(x_1) \bigg)^{p_2/p_1}d\mu_2(x_2)\bigg)^{q_2/p_2} ds\bigg)^{1/q_2}\\
\leq & \bigg(\int_{\mathbb R^m} \bigg( \int_{\Omega}\bigg(\int_{\mathbb R^n} \Big| \sum_{j} \varepsilon_{j}(s) f_j(x_1,x_2)\Big|^{p_1} d\mu_1(x_1) \bigg)^{q_2/p_1}ds\bigg)^{p_2/q_2} d\mu_2(x_2)\bigg)^{1/p_2}.
\end{align*}
Using the Kahane-Khintchine inequality again, we get
\begin{align*}
& \bigg(\int_{\mathbb R^m} \bigg( \int_{\Omega}\bigg(\int_{\mathbb R^n} \Big| \sum_{j} \varepsilon_{j}(s) f_j(x_1,x_2)\Big|^{p_1} d\mu_1(x_1) \bigg)^{q_2/p_1}ds\bigg)^{p_2/q_2} d\mu_2(x_2)\bigg)^{1/p_2}\\
\lesssim & \bigg(\int_{\mathbb R^m} \bigg( \int_{\Omega}\Big(\int_{\mathbb R^n} \bigg| \sum_{j} \varepsilon_{j}(s) f_j(x_1,x_2)\Big|^{p_1} d\mu_1(x_1) \bigg)^{p_1/p_1}ds\bigg)^{p_2/p_1} d\mu_2(x_2)\bigg)^{1/p_2}\\
= & \bigg(\int_{\mathbb R^m} \bigg( \int_{\mathbb R^n}\bigg(\int_{\Omega} \Big| \sum_{j} \varepsilon_{j}(s) f_j(x_1,x_2)\Big|^{p_1} ds \Big)^{p_1/p_1}d\mu_1(x_1) \bigg)^{p_2/p_1} d\mu_2(x_2)\bigg)^{1/p_2}\\
\lesssim & \bigg(\int_{\mathbb R^m} \bigg( \int_{\mathbb R^n} \Big(\sum_{j}  |f_j(x_1,x_2)|^2 \Big)^{p_1/2}d\mu_1(x_1) \bigg)^{p_2/p_1} d\mu_2(x_2)\bigg)^{1/p_2}\\
= & \Big\|\Big(\sum_{j}|f_j|^2\Big)^{1/2}\Big\|_{L^{p_2}(L^{p_1})(\mu_2\times \mu_1)}.
\end{align*}
Hence for $q_2> p_2$,
\[
\Big\|\Big(\sum_{j}|T(f_j)|^2\Big)^{1/2}\Big\|_{L^{q_2}(L^{q_1})(\sigma_2\times \sigma_1)}\lesssim \Big\|\Big(\sum_{j}|f_j|^2\Big)^{1/2}\Big\|_{L^{p_2}(L^{p_1})(\mu_2\times \mu_1)}.
\]

When $q_2\leq p_2$, since $\big(\Omega,\mathbb P\big)$ is a probability space,   by H\"{o}lder's inequality, we get
\begin{align*}
&\bigg(\int_{\Omega} \Big( \int_{\mathbb R^m}\bigg(\int_{\mathbb R^n} \Big| \sum_{j} \varepsilon_{j}(s) f_j(x_1,x_2)\Big|^{p_1} d\mu_1(x_1) \bigg)^{p_2/p_1}d\mu_2(x_2)\bigg)^{q_2/p_2} ds\bigg)^{1/q_2}\\
\leq & \bigg(\int_{\Omega} \bigg( \int_{\mathbb R^m}\bigg(\int_{\mathbb R^n} \Big| \sum_{j} \varepsilon_{j}(s) f_j(x_1,x_2)\Big|^{p_1} d\mu_1(x_1) \bigg)^{p_2/p_1}d\mu_2(x_2)\bigg) ds\bigg)^{1/p_2}\\
= & \bigg(\int_{\mathbb R^m} \bigg( \int_{\Omega}\bigg(\int_{\mathbb R^n} \Big| \sum_{j} \varepsilon_{j}(s) f_j(x_1,x_2)\Big|^{p_1} d\mu_1(x_1) \bigg)^{p_2/p_1}ds\bigg)^{p_2/p_2} d\mu_2(x_2)\bigg)^{1/p_2}.
\end{align*}
Similar arguments as those for the case of $q_2>p_2$ show that when $q_2\leq p_2$,
\[
\Big\|\Big(\sum_{j}|T(f_j)|^2\Big)^{1/2}\Big\|_{L^{q_2}(L^{q_1})(\sigma_2\times \sigma_1)}\lesssim \Big\|\Big(\sum_{j}|f_j|^2\Big)^{1/2}\Big\|_{L^{p_2}(L^{p_1})(\mu_2\times \mu_1)}.
\]
This completes the proof.
\end{proof}

Next we give the weighted estimates of the partial fractional integral operators
$I_{\lambda_1}^1$ and $I_{\lambda_2}^2$.

\begin{lemma}\label{Lm:mixed frac}
Let $I_{\lambda_1}$ and $I_{\lambda_2}$ be fractional integral operators in $\mathbb R^n$ and $\mathbb R^m$, respectively,
where
$0<\lambda_1<n$ and $0<\lambda_2<m$.
Let
$1<p_1,q_1, p_2,q_2<\infty$ be constants such that $1/q_1+1/p_1'=\lambda_1/n$
and $1/q_2+1/p_2'=\lambda_2/m$.
Suppose that  $\mu_1\in A_{p_1,q_1}(\mathbb R^n)$ and $\mu_2\in A_{p_2,q_2}(\mathbb R^m)$.
Then for any measurable function $f$ defined on $\mathbb R^{n+m}$, we have
\begin{align}
\|I_{\lambda_1}^1f\|_{L^{p_2}(L^{q_1})(\mu_2^{p_2}\times \mu_1^{q_1})}
&\lesssim_{[\mu_1]_{A_{p_1,q_1}(\mathbb R^n)}} \|f\|_{L^{p_2}(L^{p_1})(\mu_2^{p_2}\times \mu_1^{p_1})},
   \label{eq:I:e1}\\
\|I_{\lambda_1}^1f\|_{L^{q_2}(L^{q_1})(\mu_2^{q_2}\times \mu_1^{q_1})}
&\lesssim_{[\mu_1]_{A_{p_1,q_1}(\mathbb R^n)}} \|f\|_{L^{q_2}(L^{p_1})(\mu_2^{q_2}\times \mu_1^{p_1})},
   \label{eq:I:e2}\\
\|I_{\lambda_2}^2f\|_{L^{q_2}(L^{p_1})(\mu_2^{q_2}\times \mu_1^{p_1})}
&
   \lesssim_{[\mu_2]_{A_{p_2,q_2}(\mathbb R^m)}}
   \|f\|_{L^{p_2}(L^{p_1})(\mu_2^{p_2}\times \mu_1^{p_1})},
   \label{eq:I:e3}\\
\|I_{\lambda_2}^2f\|_{L^{q_2}(L^{q_1})(\mu_2^{q_2}\times \mu_1^{q_1})}
&
\lesssim_{[\mu_2]_{A_{p_2,q_2}(\mathbb R^m)}}
   \|f\|_{L^{p_2}(L^{q_1})(\mu_2^{p_2}\times \mu_1^{q_1})}.
    \label{eq:I:e4}
\end{align}
\end{lemma}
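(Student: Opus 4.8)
The four inequalities all follow the same template: reduce a mixed-norm bound for a partial fractional integral to the one-variable weighted bound of Muckenhoupt--Wheeden, applied inside one of the two integrals. The one-variable fact we invoke throughout is $\|I_\lambda g\|_{L^q(w^q)}\lesssim_{[w]_{A_{p,q}}}\|g\|_{L^p(w^p)}$ whenever $1/q+1/p'=\lambda/n$ and $w\in A_{p,q}$, together with the elementary observation that freezing a variable does not change the $A_{p,q}$ characteristic of a tensor weight.

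\medskip

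\textbf{Proof of \eqref{eq:I:e1} and \eqref{eq:I:e2}.} Here $I_{\lambda_1}^1$ acts on the first variable, so we work inside the inner integral over $\mathbb R^n$ with $x_2$ frozen. For a.e.\ fixed $x_2$, apply the Muckenhoupt--Wheeden inequality on $\mathbb R^n$ with exponents $p_1,q_1$ (legitimate since $1/q_1+1/p_1'=\lambda_1/n$ and $\mu_1\in A_{p_1,q_1}(\mathbb R^n)$):
\[
\Big\|I_{\lambda_1}\big(f(\cdot,x_2)\big)\Big\|_{L^{q_1}(\mu_1^{q_1})}
\lesssim_{[\mu_1]_{A_{p_1,q_1}(\mathbb R^n)}}
\Big\|f(\cdot,x_2)\Big\|_{L^{p_1}(\mu_1^{p_1})}.
\]
Now take the $L^{p_2}(\mu_2^{p_2})$ norm (resp.\ $L^{q_2}(\mu_2^{q_2})$ norm) in $x_2$ of both sides; since the implied constant does not depend on $x_2$, monotonicity of the outer norm gives exactly \eqref{eq:I:e1} (resp.\ \eqref{eq:I:e2}). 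No interaction between the two variables is used, so the outer exponent and outer weight are completely inert here.

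\medskip

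\textbf{Proof of \eqref{eq:I:e3} and \eqref{eq:I:e4}.} Now $I_{\lambda_2}^2$ acts on the second variable, and the estimate genuinely involves the outer space, since the operator raises the outer integrability from $p_2$ to $q_2$. The natural route is to interchange the order of integration so that the $I_{\lambda_2}$ bound can be applied in the $x_2$-variable. For \eqref{eq:I:e3}, writing $w_1=\mu_1^{p_1}$ for the (inert) inner weight, Minkowski's integral inequality in the form $\|\,\|g\|_{L^{q_2/p_1}_{x_2}}\,\|_{L^{p_1}_{x_1}}\le \|\,\|g\|_{L^{p_1}_{x_1}}\,\|_{L^{q_2/p_1}_{x_2}}$ (valid because $q_2/p_1\ge 1$ is \emph{not} automatic — one must check $q_2\ge p_1$; if it fails, reverse the roles and use that the exponents still allow Minkowski in the other direction, or interpolate) lets us pull the $L^{q_2}_{x_2}$-norm inside. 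With $x_1$ frozen, apply Muckenhoupt--Wheeden on $\mathbb R^m$ with exponents $p_2,q_2$ and weight $\mu_2\in A_{p_2,q_2}(\mathbb R^m)$:
\[
\Big\|I_{\lambda_2}\big(f(x_1,\cdot)\big)\Big\|_{L^{q_2}(\mu_2^{q_2})}
\lesssim_{[\mu_2]_{A_{p_2,q_2}(\mathbb R^m)}}
\Big\|f(x_1,\cdot)\Big\|_{L^{p_2}(\mu_2^{p_2})},
\]
then take $L^{p_1}(\mu_1^{p_1})$ in $x_1$ and undo the interchange. This yields \eqref{eq:I:e3}; the argument for \eqref{eq:I:e4} is identical with $\mu_1^{p_1}$ replaced by the equally inert weight $\mu_1^{q_1}$ on $\mathbb R^n$.

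\medskip

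\textbf{Main obstacle.} For \eqref{eq:I:e1}--\eqref{eq:I:e2} there is essentially no obstacle — freezing a variable and applying a one-variable theorem is routine. The only delicate point is in \eqref{eq:I:e3}--\eqref{eq:I:e4}: the order-of-integration swap needs a Minkowski-type inequality, and one must be careful about whether the relevant exponent quotient is $\ge 1$ or $\le 1$ so as to apply it in the correct direction (or, alternatively, run the whole argument through the extrapolation machinery of Proposition~\ref{prop:extropolation} to avoid the issue). Since the inner variable never interacts with the fractional integration in these two cases, its weight and exponent can be treated as fixed parameters throughout, which is what makes the tensor-product structure of the weight essential.
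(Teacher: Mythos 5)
Your argument for \eqref{eq:I:e1} and \eqref{eq:I:e2} is correct and coincides with the paper's: freeze $x_2$, apply the one-variable Muckenhoupt--Wheeden bound on $\mathbb R^n$, and take the outer norm in $x_2$.

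For \eqref{eq:I:e3} and \eqref{eq:I:e4} there is a genuine gap. You propose to \emph{swap the order of the two norms} (a mixed-norm Minkowski of the form $\|f\|_{L^{q_2}_{x_2}(L^{p_1}_{x_1})}\le\|f\|_{L^{p_1}_{x_1}(L^{q_2}_{x_2})}$), which requires $q_2\ge p_1$; and then, after applying the one-variable bound with $x_1$ frozen, you would have to swap back, from $L^{p_1}_{x_1}(L^{p_2}_{x_2})$ to $L^{p_2}_{x_2}(L^{p_1}_{x_1})$, which requires $p_1\ge p_2$. The lemma imposes no such order relation between $p_1$ and the pair $p_2,q_2$, so in general one of the two swaps points the wrong way, and the suggested remedies (``reverse the roles,'' ``interpolate'') are not concrete enough to close the hole. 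The paper avoids the issue entirely by using the \emph{continuous} Minkowski integral inequality
$\bigl\|\int g(\cdot,y_2)\,dy_2\bigr\|_{L^{p_1}_{x_1}}\le\int\|g(\cdot,y_2)\|_{L^{p_1}_{x_1}}\,dy_2$,
valid for every $p_1\ge 1$ with no hypotheses on $q_2$: writing $F(y_2):=\|f(\cdot,y_2)\|_{L^{p_1}(\mu_1^{p_1})}$ one gets
\[
\Bigl(\int_{\mathbb R^n}\bigl|I_{\lambda_2}^2 f(x_1,x_2)\bigr|^{p_1}\mu_1^{p_1}(x_1)\,dx_1\Bigr)^{1/p_1}
\le \int_{\mathbb R^m}\frac{F(y_2)}{|x_2-y_2|^{\lambda_2}}\,dy_2 = I_{\lambda_2}F(x_2),
\]
and then a single application of the one-variable bound $\|I_{\lambda_2}F\|_{L^{q_2}(\mu_2^{q_2})}\lesssim\|F\|_{L^{p_2}(\mu_2^{p_2})}$ gives \eqref{eq:I:e3} directly, with no order-of-norms exchange and no restriction on the exponents beyond those in the statement. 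The same computation with $\mu_1^{p_1}$ replaced by $\mu_1^{q_1}$ gives \eqref{eq:I:e4}. You should replace your norm-swap step by this continuous-Minkowski step.
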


\begin{proof} Observe that
\begin{align*}
\|I_{\lambda_1}^1f(\cdot,x_2)\|_{L^{q_1}(\mu_1^{q_1})} &= \|I_{\lambda_1}\big(f(\cdot,x_2)\big)\|_{L^{q_1}(\mu_1^{q_1})}\\
&\lesssim_{[\mu_1]_{A_{p_1,q_1}(\mathbb R^n)}}
     \|f(\cdot,x_2)\|_{L^{p_1}(\mu_1^{p_1})}.
\end{align*}
Taking the $L^{p_2}(\mu_2^{p_2})$ norm and the $L^{q_2}(\mu_2^{q_2})$ norm respectively, we get the first two inequalities.

On the other hand, by Minkowski's integral inequality, we get
\begin{align*}
& \|I_{\lambda_2}^2f\|_{L^{q_2}(L^{p_1})(\mu_2^{q_2}\times \mu_1^{p_1})} \\
&= \bigg(\int_{\mathbb R^m}\bigg(\int_{\mathbb R^n}\Big|\int_{\mathbb R^m}\frac{f(x_1,y_2)}{|x_2-y_2|^{\lambda_2}}dy_2\Big|^{p_1}\mu_1^{p_1}(x_1)
     dx_1\bigg)^{q_2/p_1}\mu_2^{q_2}(x_2)dx_2\bigg)^{1/q_2}\\
& \leq \bigg(\int_{\mathbb R^m}\bigg(\int_{\mathbb R^m}
   \bigg(\int_{\mathbb R^n}\frac{|f(x_1,y_2)|^{p_1}}{|x_2-y_2|^{\lambda_2 p_1}}\mu_1^{p_1}(x_1)dx_1\bigg)^{1/p_1}dy_2\bigg)^{q_2}
     \mu_2^{q_2}(x_2)dx_2\bigg)^{1/q_2}\\
& = \bigg(\int_{\mathbb R^m}\bigg(\int_{\mathbb R^m}
  \frac{\|f(\cdot,y_2)\|_{L^{p_1}(\mu_1^{p_1})}}
   {|x_2-y_2|^{\lambda_2}}dy_2\bigg)^{q_2}
     \mu_2^{q_2}(x_2)dx_2\bigg)^{1/q_2}\\
& \lesssim_{[\mu_2]_{A_{p_2,q_2}(\mathbb R^m)}}
  \bigg(\int_{\mathbb R^m}\|f(\cdot,x_2)
    \|_{L^{p_1}(\mu_1^{p_1})}^{p_2}
      \mu_2^{p_2}(x_2)dx_2\bigg)^{1/p_2}\\
& = \|f\|_{L^{p_2}(L^{p_1})(\mu_2^{p_2}\times
    \mu_1^{p_1})}.
\end{align*}
This proves (\ref{eq:I:e3}).
And (\ref{eq:I:e4}) can be proved similarly.
\end{proof}

Li,  Martikainen and Vuorinen~\cite{LiMartikainenVuorinen1806,LiMartikainenVuorinen1810}
showed that a product $bf$ can be expanded by paraproduct operators.
Specifically,
let $\Dw$ and $\mathscr D^{\beta}$ be dyadic systems in $\mathbb R^n$
 and  $\mathbb R^m$, respectively,
where $\omega=(\omega_j)_{j\in\mathbb{Z}}\in(\{0,1\}^n)^{\mathbb{Z}}$
and $\beta=(\beta_j)_{j\in\mathbb{Z}}\in (\{0,1\}^m)^{\mathbb{Z}}$.
The paraproduct operators are defined by
\[
A_1(b,f) = \sum_{Q\in\Dw,P\in\Db}\Delta_{Q\times P}b\Delta_{Q\times P}f,\quad A_2(b,f) = \sum_{Q\in\Dw,P\in\Db}\Delta_{Q\times P}bE_{Q}^1\Delta_{P}^2f,
\]
\[
A_3(b,f) = \sum_{Q\in\Dw,P\in\Db}\Delta_{Q\times P}b\Delta_{Q}^1E_{P}^2f,\quad A_4(b,f) = \sum_{Q\in\Dw,P\in\Db}\Delta_{Q\times P}b\ave{f}_{Q\times P},
\]
and
\[
A_5(b,f) = \sum_{Q\in\Dw,P\in\Db}E_{Q}^1\Delta_{P}^2b\Delta_{Q\times P}f,\quad A_6(b,f) = \sum_{Q\in\Dw,P\in\Db}E_{Q}^1\Delta_{P}^2b\Delta_{Q}^1E_{P}^2f,
\]
\[
A_7(b,f) = \sum_{Q\in\Dw,P\in\Db}\Delta_{Q}^1E_{P}^2b\Delta_{Q\times P}f,\quad A_8(b,f) = \sum_{Q\in\Dw,P\in\Db}\Delta_{Q}^1E_{P}^2bE_{Q}^1\Delta_{P}^2f.
\]
The "illegal" biparameter paraproduct is
\[
W(b,f)=\sum_{Q\in\Dw,P\in\Db} \ave{b}_{Q\times P}\Delta_{Q\times P}f.
\]
We can decompose the multiplication $bf$ in the biparameter sense,
\begin{equation}\label{eq:decomp}
bf = \sum_{k=1}^8 A_k(b,f)+W(b,f).
\end{equation}

In the following we give the weighted estimates of $A_k(b,\cdot)$ on mixed-norm spaces.

\begin{lemma}\label{Lm:paraproduct}
Suppose that $b\in \BMO_{\pro}(\nu)$, where $\nu := \mu_1^{1/p}\sigma_1^{-1/p}\otimes \mu_2^{1/q}\sigma_{2}^{-1/q}$, $\mu_1$, $\sigma_1\in A_{p}(\mathbb R^n)$,
$\mu_2$, $\sigma_2\in A_{q}(\mathbb R^m)$ and $1<p,q<\infty$.
Then for $k=1,2,3,4$, we have
\[
\|A_k(b,\cdot)\|_{L^{q}(L^p)(\mu_2\times \mu_1)\rightarrow L^{q}(L^{p})(\sigma_2\times \sigma_1)}\lesssim_{\substack{[\mu_1]_{A_{p}(\mathbb R^n)},[\mu_2]_{A_{q}(\mathbb R^m)}\\ [\sigma_1]_{A_{p}(\mathbb R^n)},[\sigma_2]_{A_{q}(\mathbb R^m)}}} \|b\|_{\BMO_{\pro}(\nu)}.
\]
\end{lemma}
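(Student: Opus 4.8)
The plan is to run the bi-parameter paraproduct argument of Li--Martikainen--Vuorinen~\cite{LiMartikainenVuorinen1810} in the present two-weight, mixed-norm setting; the genuinely new ingredients are the mixed-norm maximal and square-function bounds established above (Proposition~\ref{pro:strong maximal} and Lemma~\ref{Lm:square}) together with the weighted product $H^1$--$\BMO_{\pro}$ duality. First I would fix the dyadic systems $\Dw,\Db$ entering $A_k$; since $\|b\|_{\BMO_{\pro}^{\Dw,\Db}(\nu)}\le\|b\|_{\BMO_{\pro}(\nu)}$ it is enough to argue with these fixed systems, and one checks that $\nu=\mu_1^{1/p}\sigma_1^{-1/p}\otimes\mu_2^{1/q}\sigma_2^{-1/q}\in A_2(\mathbb R^{n+m})$, each tensor factor being of the form $u^{1/p}v^{-1/p}$ with $u,v$ in $A_p$ (resp.\ $A_q$), hence in $A_2$. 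By density and mixed-norm duality it then suffices to estimate $|\pair{A_k(b,f)}{g}|$ (Lebesgue pairing) for nice $f$ and for $g$ in the unit ball of $L^{q'}(L^{p'})(\sigma_2^{1-q'}\times\sigma_1^{1-p'})$, where $\sigma_1^{1-p'}\in A_{p'}(\mathbb R^n)$ and $\sigma_2^{1-q'}\in A_{q'}(\mathbb R^m)$ because $\sigma_1\in A_p$, $\sigma_2\in A_q$.

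For each $k\in\{1,2,3,4\}$, moving the tensor Haar function off of $b$ and onto $g$ (using $h_R^2=|R|^{-1}1_R$) rewrites $\pair{A_k(b,f)}{g}=\pair{b}{\Lambda_k(f,g)}$, where $\Lambda_k(f,g)=\sum_{Q\in\Dw,\,P\in\Db}\lambda^k_{Q,P}\,h_Q\otimes h_P$ is a diagonal Haar series whose coefficient is a product of a localized pairing of $f$ against $Q\times P$ with one of $g$. A direct computation gives the pointwise domination
\[
S_{\Dw,\Db}\Lambda_k(f,g)=\Big(\sum_{Q,P}|\lambda^k_{Q,P}|^2\,\tfrac{1_{Q\times P}}{|Q|\,|P|}\Big)^{1/2}\ \lesssim\ \mathcal M_kf\cdot\mathcal S_kg,
\]
with $(\mathcal M_4,\mathcal S_4)=(M_{\Dw,\Db},\,S_{\Dw,\Db})$, $(\mathcal M_1,\mathcal S_1)=(S_{\Dw,\Db},\,M_{\Dw,\Db})$, and ``partial'' versions built from $S_{\Dw}^1,S_{\Db}^2,M_{\Dw}^1,M_{\Db}^2$ for $k=2,3$ (for those two one first linearizes the single one-parameter martingale difference that appears via \eqref{eq:s:e1}--\eqref{eq:s:e2}). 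Since the sequence $(|\pair{b}{h_Q\otimes h_P}|^2\ave{\nu}_{Q\times P}^{-1})_{Q,P}$ is, by the very definition of $\BMO_{\pro}^{\Dw,\Db}(\nu)$, a product Carleson sequence with respect to $\nu$ with constant $\le\|b\|_{\BMO_{\pro}(\nu)}^2$, the weighted product $H^1$--$\BMO_{\pro}$ duality---equivalently, the weighted product Carleson embedding theorem---yields
\[
|\pair{b}{\Lambda_k(f,g)}|\ \lesssim\ \|b\|_{\BMO_{\pro}(\nu)}\,\|S_{\Dw,\Db}\Lambda_k(f,g)\|_{L^1(\nu)}\ \lesssim\ \|b\|_{\BMO_{\pro}(\nu)}\int_{\mathbb R^{n+m}}\mathcal M_kf\cdot\mathcal S_kg\cdot\nu\,dx.
\]

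To close, write $\nu=\mu_1^{1/p}\sigma_1^{-1/p}\otimes\mu_2^{1/q}\sigma_2^{-1/q}$ and apply Hölder's inequality in the $x_1$-integral with exponents $p,p'$ and then in the $x_2$-integral with exponents $q,q'$, attaching $\mu_1^{1/p}\mu_2^{1/q}$ to $\mathcal M_kf$ and $\sigma_1^{-1/p}\sigma_2^{-1/q}$ to $\mathcal S_kg$ (the identities $-p'/p=1-p'$ and $-q'/q=1-q'$ make the dual weights come out exactly right); this bounds the last integral by $\|\mathcal M_kf\|_{L^q(L^p)(\mu_2\times\mu_1)}\,\|\mathcal S_kg\|_{L^{q'}(L^{p'})(\sigma_2^{1-q'}\times\sigma_1^{1-p'})}$. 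By Proposition~\ref{pro:strong maximal} (and its variants for $M_{\Dw}^1,M_{\Db}^2,M_{\Dw,\Db}$, with $\mu_1\in A_p$, $\mu_2\in A_q$) together with Lemma~\ref{Lm:square}, the first factor is $\lesssim\|f\|_{L^q(L^p)(\mu_2\times\mu_1)}$; by Lemma~\ref{Lm:square} and Proposition~\ref{pro:strong maximal} (with $\sigma_1^{1-p'}\in A_{p'}$, $\sigma_2^{1-q'}\in A_{q'}$), the second factor is $\lesssim\|g\|_{L^{q'}(L^{p'})(\sigma_2^{1-q'}\times\sigma_1^{1-p'})}\le 1$. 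Taking the supremum over $g$ gives the claimed bound, with the stated dependence on $[\mu_1]_{A_p(\mathbb R^n)},[\mu_2]_{A_q(\mathbb R^m)},[\sigma_1]_{A_p(\mathbb R^n)},[\sigma_2]_{A_q(\mathbb R^m)}$.

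\textbf{The main obstacle} is the weighted product Carleson embedding invoked in the previous paragraph. Unlike its one-parameter counterpart, $\BMO_{\pro}$ is defined through a supremum over admissible \emph{open sets} rather than over single rectangles, so the embedding (equivalently, the atomic decomposition of weighted product $H^1$) requires a Journé/Chang--Fefferman covering argument; moreover it must carry the tensor-product Bloom weight $\nu\in A_2$ and interact correctly with the mixed $L^q(L^p)$ norm---an interaction governed precisely by the weighted strong-maximal and Fefferman--Stein bounds of Proposition~\ref{pro:strong maximal}, which is exactly why the tensor-product structure of $\nu$ cannot be relaxed here. A secondary, more technical, point is the treatment of $A_2$ and $A_3$, where one parameter carries an average rather than a martingale difference: there one must first pass to a partial square function and commute it past one of the two integrations via Minkowski's inequality (legitimate in the present range of exponents) before the duality can be applied; once that is done, those two cases run parallel to $A_1$ and $A_4$.
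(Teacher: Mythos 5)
Your proof takes essentially the same route as the paper: write $\pair{A_k(b,f)}{g}=\pair{b}{\phi}$, invoke the weighted product $H^1$--$\BMO_{\pro}$ duality (Proposition~\ref{prop:H1-BMO}) to get $|\pair{b}{\phi}|\lesssim\|b\|_{\BMO_{\pro}(\nu)}\|S_{\Dw,\Db}\phi\|_{L^1(\nu)}$, dominate $S_{\Dw,\Db}\phi$ pointwise by a product of a maximal-type object applied to $f$ and a square-function-type object applied to $g$, then split $\nu$ into its tensor factors and close with H\"older together with Proposition~\ref{pro:strong maximal} and Lemma~\ref{Lm:square}. Your more explicit identification of the pairs $(\mathcal M_k,\mathcal S_k)$ and of the dual weights $\sigma_1^{1-p'}\in A_{p'}$, $\sigma_2^{1-q'}\in A_{q'}$ is a welcome expansion of the paper's deliberately terse outline, and your H\"older computation is correct.

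One clarification worth making: what you flag as ``the main obstacle'' --- the weighted product Carleson embedding behind the $H^1$--$\BMO_{\pro}$ duality --- is not something this lemma needs to re-prove. The paper imports it wholesale as Proposition~\ref{prop:H1-BMO} from Holmes--Petermichl--Wick, and that statement already contains the Journ\'e/Chang--Fefferman content you describe. The truly new work in the mixed-norm setting is confined to Proposition~\ref{pro:strong maximal} (Kurtz), Lemma~\ref{Lm:square}, and the extrapolation in Lemma~\ref{Lm:square}; once those are available the proof is the same one-parameter-by-one-parameter H\"older argument you carried out. So your ``obstacle'' paragraph is accurate as a description of the underlying machinery, but it is not a gap in the proof of this lemma.

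A small point on hypotheses: the definition of $\BMO_{\pro}(w)$ in the paper only ever uses averages $\ave{w}_{I\times J}$ over dyadic rectangles, so what is really needed is that $\nu$ is a bi-parameter (rectangular) $A_2$ weight. Since $\nu=\nu_1\otimes\nu_2$ with $\nu_1=\mu_1^{1/p}\sigma_1^{-1/p}\in A_2(\mathbb R^n)$ and $\nu_2=\mu_2^{1/q}\sigma_2^{-1/q}\in A_2(\mathbb R^m)$ (by H\"older and the $A_p$ assumptions), this holds automatically; you may want to state it that way rather than claiming $\nu\in A_2(\mathbb R^{n+m})$ in the one-parameter sense, which is both stronger than needed and not what the tensor structure directly gives.
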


The classical weighted $L^p$ norm  estimates of $A_k(b,\cdot)$  were
proved by Holmes, Petermichl and Wick in \cite{HolmesPetermichlWick}.
The main tool they used was the following weighted $H^1$-$\BMO_{\pro}$ duality estimate.

\begin{proposition}[{\cite[Proposition 4.1]{HolmesPetermichlWick}}]\label{prop:H1-BMO}
For every $b\in \BMO_{\pro}^{\Dw,\Db}(w)$, we have
\[
\pair{b}{\phi} \lesssim \|b\|_{\BMO_{\pro}^{\Dw,\Db}(w)} \|S_{\Dw,\Db}\phi\|_{L^1(w)}.
\]
\end{proposition}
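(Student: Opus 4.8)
The plan is to run the Chang--Fefferman-type $H^1$--$\BMO$ duality argument, weighted by $w$. By a standard density reduction we may assume $\phi$ has a finite Haar expansion, so all sums below are finite and all sets that appear have finite Lebesgue and $w$-measure; and by splitting the pairing into the finitely many Haar types (the weighted $\BMO_{\pro}$ condition controls each type separately, and each partial square function is dominated by $S_{\Dw,\Db}\phi$) we may and do assume a single Haar type is present, so that $\Delta_R\phi=\hat\phi(R)\,h_R$ with $|\Delta_R\phi|$ equal to the constant $|\hat\phi(R)|\,|R|^{-1/2}$ on $R=I\times J$ and vanishing off $R$; in particular $|\hat\phi(R)|^2\ave{w}_R=\int_R|\Delta_R\phi|^2 w$ and $|\Delta_R\phi|\le S_{\Dw,\Db}\phi$ pointwise. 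Expanding both entries in the Haar basis,
\[
\pair{b}{\phi}=\sum_{R}\pair{b}{h_R}\pair{\phi}{h_R}=\sum_R\hat b(R)\hat\phi(R),
\]
the sum over dyadic rectangles $R=I\times J$ with $I\in\Dw$, $J\in\Db$.

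I then set up the Chang--Fefferman level sets. For $k\in\mathbb Z$ put $E_k:=\{S_{\Dw,\Db}\phi>2^k\}$ and $\Omega_k:=\{M_{\Dw,\Db}1_{E_k}>\delta\}$, where $M_{\Dw,\Db}$ is the dyadic strong maximal function and $\delta$ is a small dimensional constant. Then $E_k\subset\Omega_k$, the $\Omega_k$ are decreasing in $k$, each $\Omega_k$ is a union of dyadic rectangles and --- crucially --- an admissible set in the supremum defining $\|b\|_{\BMO_{\pro}^{\Dw,\Db}(w)}$ (for each $x\in\Omega_k$ some dyadic $R\ni x$ lies in $\Omega_k$); moreover, since $M_{\Dw,\Db}$ is bounded on $L^2(w)$ (this is where the bi-parameter nature of the weight is used; it holds for the tensor-product $A_2$ weights of this paper), we get the quantitative comparison $w(\Omega_k)\lesssim w(E_k)<\infty$. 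Assigning to each rectangle $R$ with $\hat\phi(R)\ne0$ its level $k(R):=\max\{k:R\subset\Omega_k\}$ and writing $\phi_k:=\sum_{k(R)=k}\Delta_R\phi$, we obtain a decomposition $\phi=\sum_k\phi_k$ whose pieces have disjoint Haar supports, and two structural facts: (i) each level-$k$ sum lives inside the admissible set $\Omega_k$; (ii) since $R\not\subset\Omega_{k(R)+1}$, a density estimate at a point of $R\setminus\Omega_{k(R)+1}$ (tested against $R$) gives $|R\cap E_{k(R)+1}|\le\delta|R|$, and a Journ\'e-type covering argument upgrades this to $|R\cap\Omega_{k(R)+1}|\le\tfrac12|R|$. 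Combining fact (ii) with $|\Delta_R\phi|$ being constant on $R$ and $\le S_{\Dw,\Db}\phi\le 2^{k+1}$ on $\Omega_k\setminus\Omega_{k+1}$ yields the key square-function bound
\[
\sum_{k(R)=k}|\hat\phi(R)|^2\ave{w}_R=\sum_{k(R)=k}\int_R|\Delta_R\phi|^2 w\ \lesssim\ \int_{\Omega_k\setminus\Omega_{k+1}}\big(S_{\Dw,\Db}\phi\big)^2 w\ \lesssim\ 2^{2k}\,w(\Omega_k).
\]

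The estimate then runs by levels. Writing $\pair{b}{\phi}=\sum_k\pair{b}{\phi_k}$ and applying Cauchy--Schwarz on each level with the splitting $\hat b(R)\hat\phi(R)=(\hat b(R)\ave{w}_R^{-1/2})(\hat\phi(R)\ave{w}_R^{1/2})$,
\[
|\pair{b}{\phi_k}|\le\Big(\sum_{R\subset\Omega_k}\frac{|\hat b(R)|^2}{\ave{w}_R}\Big)^{1/2}\Big(\sum_{k(R)=k}|\hat\phi(R)|^2\ave{w}_R\Big)^{1/2}\le\|b\|_{\BMO_{\pro}^{\Dw,\Db}(w)}\,2^k\,w(\Omega_k),
\]
where the first factor is bounded directly by the definition of the product $\BMO$ norm on the admissible set $\Omega_k$ (fact (i)) and the second by the key bound above. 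Summing in $k$, using $w(\Omega_k)\lesssim w(\{S_{\Dw,\Db}\phi>2^k\})$ together with the dyadic layer-cake identity $\sum_k 2^k w(\{S_{\Dw,\Db}\phi>2^k\})\approx\int S_{\Dw,\Db}\phi\,w$,
\[
|\pair{b}{\phi}|\ \lesssim\ \|b\|_{\BMO_{\pro}^{\Dw,\Db}(w)}\sum_k 2^k w(\Omega_k)\ \lesssim\ \|b\|_{\BMO_{\pro}^{\Dw,\Db}(w)}\,\|S_{\Dw,\Db}\phi\|_{L^1(w)},
\]
which is the claim.

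The main obstacle is the product-space geometry hidden in fact (ii), and hence in the key square-function bound: in contrast with one parameter, a dyadic rectangle cannot be stopped by choosing a maximal cube, so controlling the overlaps of the level-$k$ rectangles inside $\Omega_k$ --- what lets one replace the crude pointwise bound $|\hat\phi(R)|\lesssim 2^k|R|^{1/2}$ at level $k$ by the clean estimate $\sum_{k(R)=k}|\hat\phi(R)|^2\ave{w}_R\lesssim 2^{2k}w(\Omega_k)$ --- requires a Journ\'e-type covering lemma; the weighted case additionally needs the $L^2(w)$-boundedness of $M_{\Dw,\Db}$, which is why the argument is naturally carried out for the tensor-product weights at hand, each factor of which is an ordinary $A_2$ weight. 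Everything else is the bookkeeping above.
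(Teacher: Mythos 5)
The paper does not reprove this statement, it cites it to Holmes--Petermichl--Wick; your proposal reconstructs exactly the weighted Chang--Fefferman $H^1$--$\BMO_{\pro}$ duality argument that underlies that result, and the architecture (level sets $E_k$, admissible open sets $\Omega_k$ via $M_{\Dw,\Db}$, the stopping level $k(R)$, Cauchy--Schwarz level by level, and the layer-cake resummation using $w(\Omega_k)\lesssim w(E_k)$ from the $L^2(w)$-boundedness of the strong maximal function) is the right one.

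The one place where your write-up goes off the rails is the ``Journ\'e-type covering argument'' you invoke in fact (ii) to upgrade $|R\cap E_{k+1}|\le\delta|R|$ to $|R\cap\Omega_{k+1}|\le\tfrac12|R|$. No such upgrade is available: $\Omega_{k+1}$ is built from $E_{k+1}$ globally, and knowing only that $E_{k+1}$ has small density in $R$ does not control the density of $\Omega_{k+1}$ in $R$; nor is any Journ\'e covering lemma the right tool at this spot. Fortunately the step is also unnecessary. The key square-function bound should be run with $E_{k+1}$, not $\Omega_{k+1}$: since $|\Delta_R\phi|$ is constant on $R$ and $|R\setminus E_{k+1}|\ge(1-\delta)|R|$, the product $A_\infty$ property of the biparameter $A_2$ weight $w$ (a step you should make explicit) gives $w(R\setminus E_{k+1})\gtrsim w(R)$, hence $\int_R|\Delta_R\phi|^2w\lesssim\int_{R\setminus E_{k+1}}|\Delta_R\phi|^2w$. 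Now interchange the sum over $\{R:k(R)=k\}$ with the integral: pointwise $\sum_{k(R)=k}|\Delta_R\phi|^2\,1_{R\setminus E_{k+1}}\le(S_{\Dw,\Db}\phi)^2$, the support is contained in $\Omega_k\setminus E_{k+1}$ (because each $R\subset\Omega_k$), and there $S_{\Dw,\Db}\phi\le 2^{k+1}$. This yields
\[
\sum_{k(R)=k}|\hat\phi(R)|^2\ave{w}_R\ \lesssim\ \int_{\Omega_k\setminus E_{k+1}}(S_{\Dw,\Db}\phi)^2\,w\ \le\ 2^{2(k+1)}\,w(\Omega_k),
\]
with no covering lemma and no control on $R\cap\Omega_{k+1}$ needed. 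The overlap of the level-$k$ rectangles is absorbed by the interchange of summation and integration together with the pointwise definition of $S_{\Dw,\Db}\phi$ --- that, not Journ\'e, is what makes the biparameter geometry harmless here. With that repair and the explicit appeal to product $A_\infty$, your argument is correct.
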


By Proposition~\ref{prop:H1-BMO}, we have the following estimate
\begin{equation}\label{eq:s:e15}
\sum_{\substack{Q\in\Dw\\P\in\Db}}|\pair{b}{h_Q\otimes h_P}||c_{Q,P}|\lesssim \|b\|_{\BMO_{\pro}^{\Dw,\Db}(w)}\iint_{\mathbb R^{n+m}}\Big(\sum_{\substack{Q\in\Dw\\P\in\Db}}|c_{Q,P}|^2\frac{1_Q\otimes 1_P}{|Q||P|}\Big)^{1/2}w.
\end{equation}

Using the same idea as in \cite{HolmesPetermichlWick} we can prove   Lemma~\ref{Lm:paraproduct}.
Here we only outline the general strategy and leave the details to interested readers.

\begin{proof}[Proof of Lemma~\ref{Lm:paraproduct}]
By the dual property of mixed-norm spaces~\cite{BenedekPanzone}, it suffices to show that for any $f\in L^{q}(L^p)(\mu_2\times \mu_1)$ and $g\in L^{q'}(L^{p'})(\sigma_2'\times \sigma_1')$ with $\sigma_1'=\sigma_1^{1-p'}$ and $\sigma_2'=\sigma_2^{1-q'}$,
\[
|\pair{A_k(b,f)}{g}|\lesssim_{\substack{[\mu_1]_{A_{p}(\mathbb R^n)},[\mu_2]_{A_{q}(\mathbb R^m)}\\ [\sigma_1]_{A_{p}(\mathbb R^n)},[\sigma_2]_{A_{q}(\mathbb R^m)}}} \|b\|_{\BMO_{\pro}(\nu)}\|f\|_{L^{q}(L^p)(\mu_2\times \mu_1)}\|g\|_{L^{q'}(L^{p'})(\sigma_2'\times \sigma_1')}.
\]

First, we write $\pair{A_k(b,f)}{g}=\pair{b}{\phi}$, where $\phi$ depends on $f$ and $g$. By Proposition~\ref{prop:H1-BMO}, $|\pair{A_k(b,f)}{g}|\lesssim \|b\|_{\BMO_{\pro}^{\Dw,\Db}(\nu)}\|S_{\Dw,\Db}\phi\|_{L^1(\nu)}$.

Next we show that $S_{\Dw,\Db}\phi\lesssim (\mathcal{O}_1f)(\mathcal{O}_2g)$,
where $\mathcal{O}_1$ and $\mathcal{O}_2$ are operators that are combinations
of maximal operators $M_{S}$, $M_{\Dw}$, $M_{\Db}$ and square functions $S_{\Dw,\Db},S_{\Dw},S_{\Db}$. By Proposition \ref{pro:strong maximal} and
Lemma~\ref{Lm:square}, we have
\[
\|\mathcal{O}_1f\|_{L^{q}(L^p)(\mu_2\times \mu_1)}\lesssim_{[\mu_1]_{A_{p}(\mathbb R^n)},[\mu_2]_{A_{q}(\mathbb R^m)}} \|f\|_{L^{q}(L^p)(\mu_2\times \mu_1)},
\]
and
\[
\|\mathcal{O}_2g\|_{L^{q'}(L^{p'})(\sigma_2'\times \sigma_1')}\lesssim_{[\sigma_1]_{A_{p}(\mathbb R^n)},[\sigma_2]_{A_{q}(\mathbb R^m)}} \|g\|_{L^{q'}(L^{p'})(\sigma_2'\times \sigma_1')}.
\]

Finally, by H\"{o}lder's inequality, the $L^1(\nu)$-norm of $S_{\Dw,\Db}\phi$ is controlled by the $L^{q}(L^p)(\mu_2\times \mu_1)$ norm of $\mathcal{O}_1$ and the $L^{q'}(L^{p'})(\sigma_2'\times \sigma_1')$ norm of $\mathcal{O}_2$, i.e.,
\begin{align*}
\|S_{\Dw,\Db}\phi\|_{L^1(\nu)} & \lesssim  \|\mathcal{O}_1f\|_{L^{q}(L^p)(\mu_2\times \mu_1)}\|\mathcal{O}_2g\|_{L^{q'}(L^{p'})(\sigma_2'\times \sigma_1')}\\
& \lesssim_{\substack{[\mu_1]_{A_{p}(\mathbb R^n)},[\mu_2]_{A_{q}(\mathbb R^m)}\\ [\sigma_1]_{A_{p}(\mathbb R^n)},[\sigma_2]_{A_{q}(\mathbb R^m)}}} \|f\|_{L^{q}(L^p)(\mu_2\times \mu_1)}\|g\|_{L^{q'}(L^{p'})(\sigma_2'\times \sigma_1')}.
\end{align*}
Now we get the conclusion as desired.
\end{proof}

We are now ready to prove the main result.

\begin{proof}[Proof of Theorem \ref{thm:blo}]
By the representation formula in Theorem~\ref{thm:rep}, it suffices to prove the Bloom
type inequality for $\big[S_{\lambda_1,\omega}^{i,j,1},\big[b,S_{\lambda_2,\beta}^{s,t,2} \big]\big]$,
where $\omega=(\omega_j)_{j\in\mathbb{Z}}\in(\{0,1\}^n)^{\mathbb{Z}}$,
$\beta=(\beta_j)_{j\in\mathbb{Z}}\in (\{0,1\}^m)^{\mathbb{Z}}$,
$i,j,s,t\in \mathbb Z^{+}\cup \{0\}$ and
\[
S_{\lambda_1,\omega}^{i,j,1}f(x_1,x_2):=  S_{\lambda_1,\omega}^{i,j}\big(f(\cdot,x_2)\big)(x_1)
=\sum_{\substack{K,I,J\in\mathscr{D}^{\omega}\\ I^{(i)}=K, J^{(j)}=K}}a_{\lambda_1,IJK} h_{J}\otimes\pair{f}{h_I}_1,
\]
\[
S_{\lambda_2,\omega}^{s,t,2}f(x_1,x_2):=  S_{\lambda_2,\beta}^{s,t}\big(f(x_1,\cdot)\big)(x_2)
=\sum_{\substack{V,S,T\in\mathscr{D}^{\beta}\\ S^{(s)}=V, T^{(t)}=V}}a_{\lambda_2,STV} \pair{f}{h_S}_2\otimes h_{T},
\]
and the coefficients satisfy
\begin{equation}\label{eq:s:e11}
a_{\lambda_1,IJK}\leq \frac{|I|^{1/2}|J|^{1/2}}{|K|^{\lambda_1/n}} \quad
\text{and}\quad a_{\lambda_2,STV}\leq \frac{|S|^{1/2}|T|^{1/2}}{|V|^{\lambda_2/m}}.
\end{equation}
By \eqref{eq:decomp}, we have
\begin{align*}
\big[S_{\lambda_1,\omega}^{i,j,1},\big[b,S_{\lambda_2,\beta}^{s,t,2} \big]\big]f
=& S_{\lambda_1,\omega}^{i,j,1}\big(bS_{\lambda_2,\beta}^{s,t,2}f\big) - S_{\lambda_1,\omega}^{i,j,1} S_{\lambda_2,\beta}^{s,t,2}\big(bf\big) \\
& \qquad - bS_{\lambda_2,\beta}^{s,t,2}S_{\lambda_1,\omega}^{i,j,1}f + S_{\lambda_2,\beta}^{s,t,2}\big(bS_{\lambda_1,\omega}^{i,j,1}f\big)\\
=& E + \sum_{k=1}^{8}\Big[ S_{\lambda_1,\omega}^{i,j,1}\big(A_k\big(b,S_{\lambda_2,\beta}^{s,t,2}f\big)\big) - S_{\lambda_1,\omega}^{i,j,1} S_{\lambda_2,\beta}^{s,t,2}\big(A_k\big(b,f\big)\big)\\
 &\quad\quad\quad\quad - A_k\big(b,S_{\lambda_2,\beta}^{s,t,2}S_{\lambda_1,\omega}^{i,j,1}f\big) + S_{\lambda_2,\beta}^{s,t,2}\big(A_k\big(bS_{\lambda_1,\omega}^{i,j,1}f\big)\big)\Big],
\end{align*}
where
\[
E:=\sum_{\substack{K,I,J\in\mathscr{D}^{\omega}\\ I^{(i)}=K, J^{(j)}=K}} \sum_{\substack{V,S,T\in\mathscr{D}^{\beta}\\ S^{(s)}=V, T^{(t)}=V}} b_{IJST}a_{\lambda_1,IJK}a_{\lambda_2,STV}\pair{f}{h_I\otimes h_S}h_{J}\otimes h_{T}
\]
and
\[
b_{IJST}:=-\ave{b}_{I\times S}+\ave{b}_{I\times T}+\ave{b}_{J\times S}-\ave{b}_{J\times T}.
\]

We start looking at the sum over $k$.
First of all, combining \eqref{eq:major} and Lemmas~\ref{Lm:mixed frac} and \ref{Lm:paraproduct}, we  conclude that for $k\leq 4$ all the individual terms
are bounded.

Next we consider the case of $k\ge  5$. In this case the term $S_{\lambda_1,\omega}^{i,j,1}\big(A_k\big(b,S_{\lambda_2,\beta}^{s,t,2}f\big)\big)$ should be paired with another term.
For $k=5,6$, it is paired with $- A_k\big(b,S_{\lambda_2,\beta}^{s,t,2}S_{\lambda_1,\omega}^{i,j,1}f\big)$ and for $k=7,8$, it is  paired with $-S_{\lambda_1,\omega}^{i,j,1} S_{\lambda_2,\beta}^{s,t,2}\big(A_k\big(b,f\big)\big)$.
We consider only the case of $k=5$. Other cases can be proved similarly.

Since $S_{\lambda_2,\beta}^{s,t,2}S_{\lambda_1,\omega}^{i,j,1}
=S_{\lambda_1,\omega}^{i,j,1}S_{\lambda_2,\beta}^{s,t,2}$,
by \eqref{eq:major} and Lemma~\ref{Lm:mixed frac}, we have
\begin{equation}\label{eq:shift}
\|S_{\lambda_2,\beta}^{s,t,2}f\|_{L^{q_2}(L^{p_1})(\mu_2^{q_2}\times \mu_1^{p_1})}\lesssim_{[\mu_2]_{A_{p_2,q_2}(\mathbb R^m)}} \|f\|_{L^{p_2}(L^{p_1})(\mu_2^{p_2}\times \mu_1^{p_1})}.
    \end{equation}
By the dual property of mixed-norm spaces,
it suffices to
estimate
$\big|\bair{S_{\lambda_1,\omega}^{i,j,1}
\big(A_5\big(b,f\big)\big)$ $-A_5\big(b,S_{\lambda_1,\omega}^{i,j,1}f\big)}{g}\big|$
for $g\in L^{q_2'}(L^{q_1'})(\sigma_2^{-q_2'}\times \sigma_1^{-q_1'})$.

Since
\begin{align*}
& S_{\lambda_1,\omega}^{i,j,1}\big(A_5\big(b,f\big)\big)-A_5\big(b,S_{\lambda_1,\omega}^{i,j,1}f\big)\\
= & \sum_{\substack{K,I,J\in\Dw\\I^{(i)}=K,J^{(j)}=K}}\sum_{P\in\Db}a_{\lambda_1,IJK}[\ave{\pair{b}{h_{P}}_2}_{I}-\ave{\pair{b}{h_P}_2}_{J}]\pair{f}{h_I\otimes h_P}h_J\otimes h_Ph_P
\end{align*}
and
\begin{align*}
\ave{\pair{b}{h_{P}}_2}_{I}-\ave{\pair{b}{h_P}_2}_{J} =& \ave{\pair{b}{h_{P}}_2}_{I}- \ave{\pair{b}{h_{P}}_2}_{K}+\ave{\pair{b}{h_{P}}_2}_{K}- \ave{\pair{b}{h_P}_2}_{J}\\
= & \sum_{r=1}^{i}\ave{\Delta_{I^{(r)}}\pair{b}{h_{P}}_2}_{I}-\sum_{l=1}^{j}
\ave{\Delta_{J^{(l)}}\pair{b}{h_{P}}_2}_{J},
\end{align*}
  we reduce the problem to estimate
\begin{equation}\label{eq:a}
\sum_{r=1}^{i}\sum_{\substack{K,I,J\in\Dw\\I^{(i)}=K,J^{(j)}=K}}
\sum_{P\in\Db}|a_{\lambda_1,IJK}||I^{(r)}|^{-1/2}|\pair{b}{h_{I^{(r)}}\otimes h_P}||\pair{f}{h_I\otimes h_P}|\ave{|\pair{g}{h_J}_1|}_P
\end{equation}
and
\begin{equation}\label{eq:b}
\sum_{l=1}^{j}\sum_{\substack{K,I,J\in\Dw\\I^{(i)}=K,J^{(j)}=K}}\sum_{P\in\Db}|a_{\lambda_1,IJK}||J^{(l)}|^{-1/2}|\pair{b}{h_{J^{(l)}}\otimes h_P}||\pair{f}{h_I\otimes h_P}|\ave{|\pair{g}{h_J}_1|}_P.
\end{equation}

First, we estimate (\ref{eq:a}). For $1\le r\le  i$, we have
\begin{align*}
&\sum_{\substack{K,I,J\in\Dw\\I^{(i)}=K,J^{(j)}=K}}\sum_{P\in\Db}|a_{\lambda_1,IJK}||I^{(r)}|^{-1/2}|\pair{b}{h_{I^{(r)}}\otimes h_P}||\pair{f}{h_I\otimes h_P}|\ave{|\pair{g}{h_J}_1|}_P\\
=& \sum_{K\in\Dw}\sum_{\substack{Q\in\Dw\\ Q^{(i-r)}=K\\P\in\Db}}\sum_{\substack{I,J\in\Dw\\ I^{(r)}=Q\\J^{(j)}=K}}
|a_{\lambda_1,IJK}||Q|^{-1/2}|\pair{b}{h_{Q}\otimes h_P}||\pair{f}{h_I\otimes h_P}|\ave{|\pair{g}{h_J}_1|}_P\\
\leq & \sum_{K\in\Dw}\sum_{\substack{Q\in\Dw\\ Q^{(i-r)}=K\\P\in\Db}} |\pair{b}{h_{Q}\otimes h_P}||Q|^{1/2}|P|^{1/2}|K|^{1-\lambda_1}\ave{|\Delta_{K\times P}^{i,0}f|}_{Q\times P} \ave{|g|}_{K\times P},
\end{align*}
where we use (\ref{eq:s:e11}) in the last step.
We see from (\ref{eq:s:e15}) that
\begin{align*}
&\sum_{K\in\Dw}\sum_{\substack{Q\in\Dw\\ Q^{(i-r)}=K\\P\in\Db}} |\pair{b}{h_{Q}\otimes h_P}||Q|^{1/2}|P|^{1/2}|K|^{1-\lambda_1}\ave{|\Delta_{K\times P}^{i,0}f|}_{Q\times P} \ave{|g|}_{K\times P}\\
\lesssim &\|b\|_{\BMO_{\pro}(\nu)}\iint_{\mathbb R^{n+m}}\Big(\sum_{\substack{K\in \Dw\\P\in \Db}}\big[M_{\Dw,\Db}\Delta_{K\times P}^{i,0}f]^2\Big)^{1/2}M_{\lambda_1,\Dw}^1\Big(M_{\Db}^2g\Big)\cdot \nu,
\end{align*}
where $\nu = \mu_1\sigma_1^{-1}\otimes \mu_2\sigma_2^{-1}$. By H\"{o}lder's inequality,
\begin{align*}
&\iint_{\mathbb R^{n+m}}\Big(\sum_{\substack{K\in \Dw\\P\in \Db}}\big[M_{\Dw,\Db}\Delta_{K\times P}^{i,0}f]^2\Big)^{1/2}M_{\lambda_1,\Dw}^1\Big(M_{\Db}^2g\Big)\cdot \nu\\
\leq & \Big\|\Big(\sum_{\substack{K\in \Dw\\P\in \Db}}\big[M_{\Dw,\Db}\Delta_{K\times P}^{i,0}f]^2\Big)^{1/2}\Big\|_{L^{q_2}(L^{p_1})(\mu_2^{q_2}\times \mu_1^{p_1})}\\
&\qquad \cdot
\Big\|M_{\lambda_1,\Dw}^1\Big(M_{\Db}^2g\Big) \Big\|_{L^{q_2'}(L^{p_1'})(\sigma_2^{-q_2'}\times \sigma_1^{-p_1'})}.
\end{align*}
Since $\mu_1\in A_{p_1,q_1}(\mathbb R^n)$ implies $\mu_1^{p_1}\in A_{p_1}(\mathbb R^n)$ and $\mu_2\in A_{p_2,q_2}(\mathbb R^m)$ implies
$\mu_2^{q_2}\in A_{q_2}(\mathbb R^m)$, by Proposition~\ref{pro:strong maximal} and Lemma~\ref{Lm:square},
\begin{align*}
&\Big\|\Big(\sum_{\substack{K\in \Dw\\P\in \Db}}\big[M_{\Dw,\Db}\Delta_{K\times P}^{i,0}f]^2\Big)^{1/2}\Big\|_{L^{q_2}(L^{p_1})(\mu_2^{q_2}\times \mu_1^{p_1})}
  \\
&\lesssim_{[\mu_1]_{ A_{p_1,q_1}(\mathbb R^n)},[\mu_2]_{A_{p_2,q_2}(\mathbb R^m)}} \|f\|_{L^{q_2}(L^{p_1})(\mu_2^{q_2}\times \mu_1^{p_1})}.
\end{align*}
By (\ref{eq:s:e5}), we have
\begin{equation}\label{eq:s:e12}
M_{\lambda_1,\Dw}^1\Big(M_{\Db}^2g\Big) \lesssim I_{\lambda_1,\Dw}^1\Big(M_{\Db}^2g\Big).
\end{equation}
Since  $\sigma_1\in A_{p_1,q_1}(\mathbb R^n)$, we have $\sigma_1^{-1}\in A_{q_1',p_1'}$.
It follows from Lemma~\ref{Lm:mixed frac} that
\[
\Big\|M_{\lambda_1,\Dw}^1\Big(M_{\Db}^2g\Big) \Big\|_{L^{q_2'}(L^{p_1'})(\sigma^{-q_2'}_2\times \sigma_1^{-p_1'})}\lesssim_{[\sigma_1]_{ A_{p_1,q_1}(\mathbb R^n)}} \Big\|M_{\Db}^2g \Big\|_{L^{q_2'}(L^{q_1'})(\sigma^{-q_2'}_2\times \sigma_1^{-q_1'})}.
\]

On the other hand, we see from $\sigma_2\in A_{p_2,q_2}(\mathbb R^m)$
that $\sigma^{-p_2'}_2\in A_{p_2'}(\mathbb R^m)$. By
Proposition~\ref{pro:strong maximal}, we get
\[
\Big\|M_{\Db}^2g \Big\|_{L^{q_2'}(L^{q_1'})(\sigma^{-q_2'}_2\times \sigma_1^{-q_1'})}\lesssim_{[\sigma_1]_{ A_{p_1,q_1}(\mathbb R^n)},[\sigma_2]_{A_{p_2,q_2}(\mathbb R^m)}} \|g\|_{L^{q_2'}(L^{q_1'})(\sigma^{-q_2'}_2\times \sigma_1^{-q_1'})}.
\]
So we conclude that
\begin{align*}
\quad &\sum_{\substack{K,I,J\in\Dw\\I^{(i)}=K,J^{(j)}=K}}\sum_{P\in\Db}|a_{\lambda_1,IJK}||I^{(r)}|^{-1/2}|\pair{b}{h_{I^{(r)}}\otimes h_P}||\pair{f}{h_I\otimes h_P}|\ave{|\pair{g}{h_J}_1|}_P\\
&\!\!\lesssim_{\substack{[\mu_1]_{A_{p_1,q_1}(\mathbb R^n)},[\mu_2]_{A_{p_2,q_2}(\mathbb R^m)} \\ [\sigma_1]_{A_{p_1,q_1}(\mathbb R^n)},[\sigma_2]_{A_{p_2,q_2}(\mathbb R^m)}}}
 \!\!
 \|b\|_{\BMO_{\pro}(\nu)} \|f\|_{L^{q_2}(L^{p_1})(\mu_2^{q_2}\times \mu_1^{p_1})}
 \|g\|_{L^{q_2'}(L^{q_1'})(\sigma^{-q_2'}_2\times \sigma_1^{-q_1'})}.
\end{align*}

Next, we estimate (\ref{eq:b}).
Using the same method as that for estimating (\ref{eq:a}),
we have
\begin{align*}
&\sum_{\substack{K,I,J\in\Dw\\I^{(i)}=K,J^{(j)}=K}}\sum_{P\in\Db}|a_{\lambda_1,IJK}||J^{(l)}|^{-1/2}|\pair{b}{h_{J^{(l)}}\otimes h_P}||\pair{f}{h_I\otimes h_P}|\ave{|\pair{g}{h_J}_1|}_P\\
=& \sum_{K\in\Dw}\sum_{\substack{Q\in\Dw\\ Q^{(j-l)}=K\\P\in\Db}}\sum_{\substack{I,J\in\Dw\\ I^{(i)}=K\\J^{(l)}=Q}}
|a_{\lambda_1,IJK}||Q|^{-1/2}|\pair{b}{h_{Q}\otimes h_P}||\pair{f}{h_I\otimes h_P}|\ave{|\pair{g}{h_J}_1|}_P\\
\leq & \sum_{K\in\Dw}\sum_{\substack{Q\in\Dw\\ Q^{(j-l)}=K\\P\in\Db}} |\pair{b}{h_{Q}\otimes h_P}||Q|^{1/2}|P|^{1/2}|K|^{1-\lambda_1}\ave{|\Delta_{K\times P}^{i,0}f|}_{K\times P} \ave{|g|}_{Q\times P}\\
\lesssim &\|b\|_{\BMO_{\pro}(\nu)}\iint_{\mathbb R^{n+m}}\Big(\sum_{\substack{K\in \Dw\\P\in \Db}}\big[M_{\lambda_1,\Dw}^1\big(M_{\Db}^2\Delta_{K\times P}^{i,0}f\big)]^2\Big)^{1/2}M_{\Dw,\Db}g\cdot \nu\\
\leq & \|b\|_{\BMO_{\pro}(\nu)} \Big\|\Big(\sum_{\substack{K\in \Dw\\P\in \Db}}\big[M_{\lambda_1,\Dw}^1\big(M_{\Db}^2\Delta_{K\times P}^{i,0}f\big)]^2\Big)^{1/2}\Big\|_{L^{q_2}(L^{q_1})(\mu_2^{q_2}\times \mu_1^{q_1})}\\
&\qquad \cdot
\Big\|M_{\Dw,\Db}g \Big\|_{L^{q_2'}(L^{q_1'})(\sigma^{-q_2'}_2
\times \sigma_1^{-q_1'})}.
\end{align*}
Also by (\ref{eq:s:e5}) we get
\[
M_{\lambda_1,\Dw}^1\big(M_{\Db}^2\Delta_{K\times P}^{i,0}f\big)\lesssim I_{\lambda_1,\Dw}^1\big(M_{\Db}^2\Delta_{K\times P}^{i,0}f\big).
\]
By Lemma~\ref{Lm:mixed frac}, $I_{\lambda_1,\Dw}^1$ is bounded from $L^{q_2}(L^{p_1})(\mu_2^{q_2}\times \mu_1^{p_1})$ to $L^{q_2}(L^{q_1})(\mu_2^{q_2}\times \mu_1^{q_1})$.
Note that $I_{\lambda_1,\Dw}^1$ is linear.
It follows from Lemma~\ref{Lm:vector value inequality} that
\begin{align*}
&\Big\|\Big(\sum_{\substack{K\in \Dw\\P\in \Db}}\big[M_{\lambda_1,\Dw}^1\big(M_{\Db}^2\Delta_{K\times P}^{i,0}f\big)]^2\Big)^{1/2}\Big\|_{L^{q_2}(L^{q_1})(\mu_2^{q_2}\times \mu_1^{q_1})}\\
\lesssim & \Big\|\Big(\sum_{\substack{K\in \Dw\\P\in \Db}}\big[I_{\lambda_1,\Dw}^1\big(M_{\Db}^2\Delta_{K\times P}^{i,0}f\big)]^2\Big)^{1/2}\Big\|_{L^{q_2}(L^{q_1})(\mu_2^{q_2}\times \mu_1^{q_1})}\\
\lesssim&_{[\mu_1]_{A_{p_1,q_1}}(\mathbb R^n)}\Big\|\Big(\sum_{\substack{K\in \Dw\\P\in \Db}}\big[M_{\Db}^2\Delta_{K\times P}^{i,0}f]^2\Big)^{1/2}\Big\|_{L^{q_2}(L^{p_1})(\mu_2^{q_2}\times \mu_1^{p_1})}.
\end{align*}
Now we see from  Proposition \ref{pro:strong maximal} and Lemma~\ref{Lm:square}
that
\begin{align*}
&\Big\|\Big(\sum_{\substack{K\in \Dw\\P\in \Db}}\big[M_{\Db}^2\Delta_{K\times P}^{i,0}f]^2\Big)^{1/2}\Big\|_{L^{q_2}(L^{p_1})(\mu_2^{q_2}
\times \mu_1^{p_1})} \\
&\lesssim_{[\mu_1]_{A_{p_1,q_1}(\mathbb R^n)},[\mu_2]_{A_{p_2,q_2}(\mathbb R^m)}}
\|f\|_{L^{q_2}(L^{p_1})(\mu_2^{q_2}\times \mu_1^{p_1})}.
\end{align*}
By Proposition \ref{pro:strong maximal},
\[
\Big\|M_{\Dw,\Db}g \Big\|_{L^{q_2'}(L^{q_1'})(\sigma^{-q_2'}_2\times \sigma_1^{-q_1'})}\lesssim_{[\sigma_1]_{A_{p_1,q_1}(\mathbb R^n)},[\sigma_2]_{A_{p_2,q_2}(\mathbb R^m)}} \|g\|_{L^{q_2'}(L^{q_1'})(\sigma^{-q_2'}_2\times \sigma_1^{-q_1'})}.
\]
It follows that
\begin{align*}
\quad &\sum_{\substack{K,I,J\in\Dw\\I^{(i)}=K,J^{(j)}=K}}\sum_{P\in\Db}|a_{\lambda_1,IJK}||J^{(l)}|^{-1/2}|\pair{b}{h_{J^{(l)}}\otimes h_P}||\pair{f}{h_I\otimes h_P}|\ave{|\pair{g}{h_J}_1|}_P\\
&\!\!\lesssim_{\substack{[\mu_1]_{A_{p_1,q_1}(\mathbb R^n)},[\mu_2]_{A_{p_2,q_2}(\mathbb R^m)} \\
  [\sigma_1]_{A_{p_1,q_1}(\mathbb R^n)},[\sigma_2]_{A_{p_2,q_2}(\mathbb R^m)}}}
\!\!\|b\|_{\BMO_{\pro}(\nu)} \|f\|_{L^{q_2}(L^{p_1})(\mu_2^{q_2}\times \mu_1^{p_1})}
    \|g\|_{L^{q_2'}(L^{q_1'})(\sigma^{-q_2'}_2\times \sigma_1^{-q_1'})}.
\end{align*}
Taking the summation   over $r$ and $l$, we obtain
\begin{align*}
&\hskip -2em \big|\bair{S_{\lambda_1,\omega}^{i,j,1}\big(A_5\big(b,f\big)\big)-A_5\big(b,S_{\lambda_1,\omega}^{i,j,1}f\big)}{g}\big|\\
\lesssim
&_{\substack{[\mu_1]_{A_{p_1,q_1}(\mathbb R^n)},[\mu_2]_{A_{p_2,q_2}(\mathbb R^m)} \\ [\sigma_1]_{A_{p_1,q_1}(\mathbb R^n)},[\sigma_2]_{A_{p_2,q_2}(\mathbb R^m)}}}(1+\max(i,j))\|b\|_{\BMO_{\pro}(\nu)} \\
&\qquad \cdot \|f\|_{L^{q_2}(L^{p_1})(\mu_2^{q_2}\times \mu_1^{p_1})} \|g\|_{L^{q_2'}(L^{q_1'})(\sigma^{-q_2'}_2\times \sigma_1^{-q_1'})}.
\end{align*}
Combining with \eqref{eq:shift}, we get
\begin{align*}
&\big\|S_{\lambda_1,\omega}^{i,j,1}\big(A_5\big(b,S_{\lambda_2,\beta}^{s,t,2}f\big)\big)-A_5\big(b,S_{\lambda_2,\beta}^{s,t,2}
S_{\lambda_1,\omega}^{i,j,1}f\big)\big\|_{L^{q_2}(L^{q_1})(\sigma^{q_2}_2,\times \sigma_1^{q_1})}\\
\lesssim &_{\substack{[\mu_1]_{A_{p_1,q_1}(\mathbb R^n)},[\mu_1]_{A_{p_2,q_2}(\mathbb R^m)} \\ [\sigma_1]_{A_{p_1,q_1}(\mathbb R^n)},[\sigma_2]_{A_{p_2,q_2}(\mathbb R^m)}}}(1+\max(i,j))\|b\|_{\BMO_{\pro}(\nu)} \|f\|_{L^{p_2}(L^{p_1})(\mu_2^{p_2}\times \mu_1^{p_1})}.
\end{align*}
It remains to estimate
\[
E:=\sum_{\substack{K,I,J\in\mathscr{D}^{\omega}\\ I^{(i)}=K, J^{(j)}=K}} \sum_{\substack{V,S,T\in\mathscr{D}^{\beta}\\ S^{(s)}=V, T^{(t)}=V}} b_{IJST}a_{\lambda_1,IJK}a_{\lambda_2,STV}\pair{f}{h_I\otimes h_S}h_{J}\otimes h_{T},
\]
where
\[
b_{IJST}:=-\ave{b}_{I\times S}+\ave{b}_{I\times T}+\ave{b}_{J\times S}-\ave{b}_{J\times T}.
\]
Noting that
\begin{align*}
b_{IJST}=&-\sum_{r=1}^{i}\sum_{l=1}^{s}\bve{\Delta_{I^{(r)}\times S^{(l)}}b}_{I\times S}+\sum_{r=1}^{i}\sum_{l=1}^{t}\bve{\Delta_{I^{(r)}\times T^{(l)}}b}_{I\times T}\\
&+\sum_{r=1}^{j}\sum_{l=1}^{s}\bve{\Delta_{J^{(r)}\times S^{(l)}}b}_{J\times S}-\sum_{r=1}^{j}\sum_{l=1}^{t}\bve{\Delta_{J^{(r)}\times T^{(l)}}b}_{J\times T},
\end{align*}
with similar arguments we get that
\begin{align*}
& \big\|\big[S_{\lambda_1,\omega}^{i,j,1},\big[b,S_{\lambda_2,\beta}^{s,t,2} \big]\big]\big\|_{L^{p_2}(L^{p_1})(\mu_2^{p_2}\times \mu_1^{p_1})\rightarrow L^{q_2}(L^{q_1})(\sigma^{q_2}_2,\times \sigma_1^{q_1})}\\
\lesssim &_{\substack{[\mu_1]_{A_{p_1,q_1}(\mathbb R^n)},[\mu_2]_{A_{p_2,q_2}(\mathbb R^m)} \\ [\sigma_1]_{A_{p_1,q_1}(\mathbb R^n)},[\sigma_2]_{A_{p_2,q_2}(\mathbb R^m)}}}(1+\max(i,j))(1+\max(s,t))\|b\|_{\BMO_{\pro}(\nu)}.
\end{align*}
This completes the proof.
\end{proof}


\begin{thebibliography}{10}

\bibitem{Airta}
E.~Airta.
\newblock Two-weight commutator estimates: general multi-parameter framework.
\newblock {\em arXiv preprint arXiv:1906.10983}, 2019.

\bibitem{Benedek1962}
A.~{Benedek}, A.~P. {Calder\'on}, and R.~{Panzone}.
\newblock {Convolution operators on Banach space valued functions.}
\newblock {\em {Proc. Natl. Acad. Sci. USA}}, 48:356--365, 1962.

\bibitem{BenedekPanzone}
A.~Benedek and R.~Panzone.
\newblock The spaces {$L^{p}$}, with mixed norm.
\newblock {\em Duke Math. J.}, 28:301--324, 1961.

\bibitem{Bloom}
S.~Bloom.
\newblock A commutator theorem and weighted {BMO}.
\newblock {\em Trans. Amer. Math. Soc.}, 292(1):103--122, 1985.

\bibitem{cg}
M.~Cao and H.~Gu.
\newblock Two-weight characterization for commutators of bi-parameter
  fractional integrals.
\newblock {\em Nonlinear Anal.}, 171:1--20, 2018.

\bibitem{ChenSun2017}
T.~Chen and W.~Sun.
\newblock Iterated weak and weak mixed-norm spaces with applications to
  geometric inequalities.
\newblock {\em J. Geom. Anal.}, In Press, 2019.

\bibitem{Cleanthous2017}
G.~Cleanthous, A.~G. Georgiadis, and M.~Nielsen.
\newblock Anisotropic mixed-norm hardy spaces.
\newblock {\em J. Geom. Anal.}, 27(4):2758--2787, Oct 2017.


\bibitem{Cleanthous2017c}
G.~Cleanthous, A.~G. Georgiadis, and M.~Nielsen.
\newblock Molecular decomposition of anisotropic homogeneous mixed-norm spaces
  with applications to the boundedness of operators.
\newblock {\em Appl. Comput. Harmon. Anal.}, 2017.

\bibitem{UribeMartellPerez2012}
D.~Cruz-Uribe, J.~M. Martell, and C.~P\'{e}rez.
\newblock Sharp weighted estimates for classical operators.
\newblock {\em Adv. Math.}, 229(1):408--441, 2012.

\bibitem{UribeMoen}
D.~Cruz-Uribe and K.~Moen.
\newblock One and two weight norm inequalities for {R}iesz potentials.
\newblock {\em Illinois J. Math.}, 57(1):295--323, 2013.

\bibitem{Duoandikoetxea11}
J.~Duoandikoetxea.
\newblock Extrapolation of weights revisited: new proofs and sharp bounds.
\newblock {\em J. Funct. Anal.}, 260(6):1886--1901, 2011.

\bibitem{Ferguson-Lacey}
S.~H. Ferguson and M.~T. Lacey.
\newblock A characterization of product {BMO} by commutators.
\newblock {\em Acta Math.}, 189(2):143--160, 2002.

\bibitem{Fernandez1987}
D.~L. Fernandez.
\newblock {Vector-valued singular integral operators on $L\sp p$-spaces with
  mixed norms and applications.}
\newblock {\em {Pac. J. Math.}}, 129(2):257--275, 1987.

\bibitem{Georgiadis2017}
A.~G. Georgiadis, J.~Johnsen, and M.~Nielsen.
\newblock Wavelet transforms for homogeneous mixed-norm {T}riebel-{L}izorkin
  spaces.
\newblock {\em Monatsh. Math.}, 183(4):587--624, 2017.


\bibitem{Grafakos}
L.~Grafakos.
\newblock {\em Classical {F}ourier analysis}, volume 249 of {\em Graduate Texts
  in Mathematics}.
\newblock Springer, New York, second edition, 2008.

\bibitem{Hart2018}
J.~Hart, R.~H. Torres, and X.~Wu.
\newblock Smoothing properties of bilinear operators and {L}eibniz-type rules
  in {L}ebesgue and mixed {L}ebesgue spaces.
\newblock {\em Trans. Amer. Math. Soc.}, 370(12):8581--8612, 2018.

\bibitem{HLW}
I.~Holmes, M.~T. Lacey, and B.~D. Wick.
\newblock Commutators in the two-weight setting.
\newblock {\em Math. Ann.}, 367(1-2):51--80, 2017.

\bibitem{HolmesPetermichlWick}
I.~Holmes, S.~Petermichl, and B.~D. Wick.
\newblock Weighted little bmo and two-weight inequalities for {J}ourn\'{e}
  commutators.
\newblock {\em Anal. PDE}, 11(7):1693--1740, 2018.

\bibitem{HW}
I.~Holmes and B.~D. Wick.
\newblock Two weight inequalities for iterated commutators with
  {C}alder\'{o}n-{Z}ygmund operators.
\newblock {\em J. Operator Theory}, 79(1):33--54, 2018.

\bibitem{Hormander1960}
L.~H\"{o}rmander.
\newblock Estimates for translation invariant operators in {$L^{p}$}\ spaces.
\newblock {\em Acta Math.}, 104:93--140, 1960.

\bibitem{HuangLiuYangYuan2018}
L.~Huang, J.~Liu, D.~Yang, and W.~Yuan.
\newblock Atomic and littlewood-paley characterizations of anisotropic
  mixed-norm hardy spaces and their applications.
\newblock {\em J. Geom. Anal.}, 29(3):1991--2067, 2019.

\bibitem{Huang2019}
L.~Huang, J.~Liu, D.~Yang, and W.~Yuan.
\newblock Dual spaces of anisotropic mixed-norm {H}ardy spaces.
\newblock {\em Proc. Amer. Math. Soc.}, 147(3):1201--1215, 2019.

\bibitem{HytonenNeervenVeraarWeis}
T.~Hyt\"{o}nen, J.~van Neerven, M.~Veraar, and L.~Weis.
\newblock {\em Analysis in {B}anach spaces. {V}ol. {I}. {M}artingales and
  {L}ittlewood-{P}aley theory}, volume~63 of {\em Ergebnisse der Mathematik und
  ihrer Grenzgebiete. 3. Folge. A Series of Modern Surveys in Mathematics
  [Results in Mathematics and Related Areas. 3rd Series. A Series of Modern
  Surveys in Mathematics]}.
\newblock Springer, Cham, 2016.

\bibitem{Hytonen17}
T.~P. Hyt\"{o}nen.
\newblock Representation of singular integrals by dyadic operators, and the
  ${A}_2$ theorem.
\newblock {\em arXiv preprint arXiv:1108.5119}, 2011.

\bibitem{Hytonen10}
T.~P. Hyt\"{o}nen.
\newblock The sharp weighted bound for general {C}alder\'{o}n-{Z}ygmund
  operators.
\newblock {\em Ann. of Math. (2)}, 175(3):1473--1506, 2012.

\bibitem{Hytonen16}
T.~P. Hyt\"{o}nen.
\newblock The {H}olmes-{W}ick theorem on two-weight bounds for higher order
  commutators revisited.
\newblock {\em Arch. Math. (Basel)}, 107(4):389--395, 2016.



\bibitem{Johnsen2015}
J.~Johnsen, S.~Munch~Hansen, and W.~Sickel.
\newblock Anisotropic {L}izorkin-{T}riebel spaces with mixed norms---traces on
  smooth boundaries.
\newblock {\em Math. Nachr.}, 288(11-12):1327--1359, 2015.



\bibitem{Kurtz07}
D.~S. Kurtz.
\newblock Classical operators on mixed-normed spaces with product weights.
\newblock {\em Rocky Mountain J. Math.}, 37(1):269--283, 2007.

\bibitem{LORR}
A.~K. Lerner, S.~Ombrosi, and I.~P. Rivera-R\'{\i}os.
\newblock Commutators of singular integrals revisited.
\newblock {\em Bull. Lond. Math. Soc.}, 51(1):107--119, 2019.

\bibitem{LiMartikainenVuorinen1806}
K.~Li, H.~Martikainen, and E.~Vuorinen.
\newblock Bloom type inequality for bi-parameter singular integrals: efficient
  proof and iterated commutators.
\newblock {\em Int. Math. Res. Not. IMRN}, In press, 2019.

\bibitem{LiMartikainenVuorinen1810}
K.~Li, H.~Martikainen, and E.~Vuorinen.
\newblock Bloom type upper bounds in the product {B}{M}{O} setting.
\newblock {\em J. Geom. Anal.}, In Press, 2019.

\bibitem{Martikainen2011}
H.~Martikainen.
\newblock Representation of bi-parameter singular integrals by dyadic
  operators.
\newblock {\em Adv. Math.}, 229(3):1734--1761, 2012.

\bibitem{MuckenhouptWheeden1974}
B.~Muckenhoupt and R.~Wheeden.
\newblock Weighted norm inequalities for fractional integrals.
\newblock {\em Trans. Amer. Math. Soc.}, 192:261--274, 1974.

\bibitem{Ou2015}
Y.~Ou.
\newblock A {$T(b)$} theorem on product spaces.
\newblock {\em Trans. Amer. Math. Soc.}, 367(9):6159--6197, 2015.

\bibitem{Rubio1986}
J.~L. {Rubio de Francia}, F.~J. {Ruiz}, and J.~L. {Torrea}.
\newblock {Calder\'on-Zygmund theory for operator-valued kernels.}
\newblock {\em {Adv. Math.}}, 62:7--48, 1986.

\bibitem{Stefanov2004}
A.~Stefanov and R.~H. Torres.
\newblock Calder\'{o}n-{Z}ygmund operators on mixed {L}ebesgue spaces and
  applications to null forms.
\newblock {\em J. London Math. Soc. (2)}, 70(2):447--462, 2004.

\bibitem{Torres2015}
R.~H. {Torres} and E.~L. {Ward}.
\newblock {Leibniz's rule, sampling and wavelets on mixed Lebesgue spaces.}
\newblock {\em {J. Fourier Anal. Appl.}}, 21(5):1053--1076, 2015.

\bibitem{Wilson}
M.~Wilson.
\newblock {\em Weighted {L}ittlewood-{P}aley theory and exponential-square
  integrability}, volume 1924 of {\em Lecture Notes in Mathematics}.
\newblock Springer, Berlin, 2008.

\end{thebibliography}
\end{document}